\definecolor{darkblue}{rgb}{0,0,.7}
\definecolor{plch}{rgb}{1,0.5,0}
\newcommand{\numeoc}[1]{\num[round-precision=1,round-mode=places, scientific-notation=false, retain-zero-exponent=false]{#1}}
\newtheorem{theorem}{Theorem}[section]
\newtheorem{lemma}[theorem]{Lemma}
\theoremstyle{definition}
\newcommand{\nh}[1]{\left\vvvert{{#1}}\right\vvvert}
\newcommand{\nhe}[1]{\nh{{#1}}_{\eps}}
\newcommand{\nhep}[1]{\nh{{#1}}_{\eps,+}}
\newcommand*{\IV}{I_V}
\newcommand*{\IVh}{I_{\hat{V}}}
\newcommand*{\IW}{I_W}
\newcommand*{\IS}{I_\Sigma}
\newcommand*{\IQ}{I_Q}
\newcommand*{\PiR}{{\Pi^{R}}}
\newcommand*{\RF}{{R_{\scriptscriptstyle F}}}
\newcommand*{\xF}{{x_{\scriptscriptstyle F}}}
\newcommand*{\rF}{{r_{\scriptscriptstyle F}}}
\def\d{\partial}
\newcommand{\veps}{{\varepsilon}}
\newcommand{\dofs}{{\tt{dofs}}}
\newcommand{\Ureg}{U_{\operatorname{reg}}}
\newcommand{\Qreg}{Q_{\operatorname{reg}}}
\newcommand{\Sreg}{\Sigma_{\operatorname{reg}}}
\renewcommand{\div}{\operatorname{div}}
\newcommand{\id}{\operatorname{I}}
\newcommand{\hdg}{\operatorname{hdg}}
\newcommand{\curl}{\operatorname{curl}}
\newcommand{\dev}{\operatorname{dev}}
\newcommand{\eps}{\varepsilon}
\newcommand{\trans}{{\operatorname{T}}}
\newcommand{\trace}[1]{{\operatorname{tr}(#1)}}
\newcommand{\BDM}{\mathcal{BDM}^1}
\newcommand{\RT}{\mathcal{RT}^0}
\newcommand{\kk}{\mathbb{K}} 
\newcommand{\dd}{\mathbb{D}} 
\newcommand{\mm}{\mathbb{M}}
\newcommand{\skw}{\operatorname{skw}}
\newcommand{\dx}{\mathop{\mathrm{d} x}}
\newcommand{\ds}{\mathop{\mathrm{d} s}}
\newcommand{\Forall}{\text{ for all }}
\renewcommand{\hat}{\widehat}
\newcommand{\EE}{\mathcal{E}}
\newcommand{\rr}{\mathbb{R}}
\newcommand{\facets}{\mathcal{F}}
\newcommand{\mesh}{\ensuremath{\mathcal{T}}}
\newcommand{\T}{\mesh}
\newcommand{\jump}[1]{\ldbrack{{#1}}\rdbrack}
\newcommand{\mean}[1]{\{ #1 \}}
\newcommand{\GammaD}{\Gamma_D}
\newcommand{\GammaN}{\Gamma_N}
\newcommand{\qqed}{}
\begin{document}

\title[$H(\div)$-conforming velocity-vorticity approximations]
{Divergence-conforming velocity and vorticity
approximations for incompressible fluids obtained with minimal facet coupling}

\author
{J. Gopalakrishnan} 
\email{gjay@pdx.edu}
\address{Portland State University, PO Box 751, Portland OR 97207, USA} 

\author
{L. Kogler} 
\email{lukas.kogler@tuwien.ac.at}
\address{Institute for Analysis and Scientific Computing, TU Wien\\
Wiedner Hauptstra{\ss}e 8-10, 1040 Wien, Austria} 

\author
{P. L. Lederer} 
\email{p.l.lederer@utwente.nl}
\address{Department of Applied Mathematics, University of Twente,
Hallenweg 19, 7522NH  Enschede, Netherlands,} 
 
 \author{J. Sch\"oberl}
 \email{joachim.schoeberl@tuwien.ac.at}
 \address{Institute for Analysis and Scientific Computing, TU Wien\\
 Wiedner Hauptstra{\ss}e 8-10, 1040 Wien, Austria}

\begin{abstract}
  {We introduce two new lowest order methods, a mixed method, and a
hybrid Discontinuous Galerkin (HDG) method, for the approximation of
incompressible flows. Both methods use divergence-conforming linear
Brezzi-Douglas-Marini space for approximating the  velocity and the
lowest order Raviart-Thomas space for approximating  
the vorticity. Our methods are based on the physically correct viscous
stress tensor of the fluid, involving the symmetric gradient of
velocity (rather than the gradient),  provide exactly divergence-free
discrete velocity solutions, and optimal error estimates that are also
pressure robust. We explain how the methods are constructed using the
minimal number of coupling degrees of freedom per facet. The stability
analysis of both methods are based on a Korn-like inequality for
vector finite elements with continuous normal component. Numerical
examples illustrate  the theoretical findings and offer comparisons of
condition numbers between the two new methods.}

\medskip
{{\em Keywords:} incompressible Stokes equations,  mixed finite elements,
pressure-robustness, Hybrid Discontinuous Galerkin methods, discrete
Korn inequality}
\end{abstract}
\maketitle

\section{Introduction}
\label{sec:introduction}

In this work we introduce two new methods for the discretization of
the steady incompressible Stokes equations in three space dimensions.
Let $\Omega \subset \rr^3$ be an open bounded domain with
Lipschitz boundary $\partial \Omega$ that is split into the Dirichlet
boundary $\GammaD$ and outflow boundary
$\GammaN$. The Stokes system for
the fluid {\em velocity} $u$ and the {\em pressure} $p$
is given by
\begin{subequations}\label{eq::stokes}
\begin{alignat}{2} 
  -\div(\nu \eps(u)) + \nabla p & = f && \quad \textrm{in } \Omega, \label{eq::stokes-a} \\
  \div u &=0 && \quad \textrm{in } \Omega, \label{eq::stokes-b}\\
  u &= 0 &&\quad \textrm{on } \GammaD, \label{eq::stokes-c}\\
  (-\nu\eps(u) + p I)n &= 0 &&\quad\textrm{on } \GammaN,\label{eq::stokes-d}
\end{alignat}
\end{subequations}
where $\eps(u) := (\nabla u + \nabla u ^\trans)/2$ is the symmetric
gradient, $f: \Omega \rightarrow \rr^3$ is an external body force,
$\nu$  is twice the kinematic viscosity,
$n$ is the outward unit normal vector and
$I\in\rr^{3\times 3}$ is the identity matrix.
We assume that both $\GammaD$ and $\GammaN$ have positive boundary
measure, and any rigid displacement vanishing on $\Gamma_D$
vanishes everywhere in~$\Omega$. (As usual, when $\Gamma_N$ is empty the pressure
space must be adapted to obtain a unique 
pressure~\cite{girault2012finite}, but we omit this case for simplicity.)
Next, define the {\em viscous stress tensor} \cite{LandaLifsh59}
by $\sigma = \nu \eps(u)$ and the {\em vorticity} by  $\omega =
\curl u$. Using them,  we can rewrite the above system as
\begin{subequations}
  \label{eq::mixedstressstokes}
  \begin{alignat}{2}
  \label{eq::mixedstressstokes-a}
  \nu^{-1}{\dev{\sigma}} - \nabla u + \kappa(\omega) & = 0  &&\quad  \textrm{in } \Omega,  \\
  \label{eq::mixedstressstokes-b}
  -\div  \sigma + \nabla p & = f  &&\quad  \textrm{in } \Omega,  \\
 \label{eq::mixedstressstokes-c} 
 \sigma - \sigma^\trans & = 0 && \quad \textrm{in } \Omega,  \\
  \label{eq::mixedstressstokes-d} 
  \div u &=0 &&\quad  \textrm{in } \Omega, \\
  \label{eq::mixedstressstokes-e}
  u &= 0 &&\quad \textrm{on } \GammaD, \\
  (\sigma - p I)n &= 0 &&\quad\textrm{on } \GammaN.
  \label{eq::mixedstressstokes-f}
 \end{alignat}
\end{subequations} 
Here we used the deviatoric part of the tensor $\tau$ given by
$
  \dev{\tau} := \tau - \frac{1}{3}\trace{\tau} I,
$
the matrix trace $\trace{\tau} := \sum_{i=1}^3 \tau_{ii}$, and the
operator $  \kappa: \rr^{3} \to \{\tau \in \rr^{3 \times 3}: \tau +
\tau^{\trans} = 0 \}$ defined by 
\begin{align*}
  \kappa(v) =
  \frac 1 2 
  \begin{pmatrix}
    0 & -v_3 & v_2 \\
    v_3 & 0 & -v_1 \\
    -v_2 & v_1 & 0
  \end{pmatrix}.
\end{align*}
Note the obvious identities
\begin{equation}
  \label{eq:kappa-identities}
  \nabla v = \eps(v) + \kappa(\curl v),
  \qquad
  2 \kappa(v) w = v \times w,
\end{equation}
for vector fields $v$ and $w$ (the first of which was already  used in~\eqref{eq::mixedstressstokes-a}).
We will refer to system~\eqref{eq::stokes} as the primal
formulation and system~\eqref{eq::mixedstressstokes} as the mixed formulation.

The literature on discretizations of \eqref{eq::stokes} and
\eqref{eq::mixedstressstokes} is too vast to list here.  The
relatively  recent quest for exactly divergence-free velocity
solutions and pressure-independent a~priori error estimates for
velocity, often referred to as pressure robust
estimates~\cite{linke2014role, 2016arXiv160903701L}, has rejuvenated
the field.  A recurring theme in this vast literature, from the early
non-conforming method of~\cite{CrouzRavia73} to the more
recent~\cite{LS_CMAME_2016}, is the desire to improve computational
efficiency by minimizing inter-element coupling. However, less studied
are its side effects on stability when
 the actual physical flux replaces 
the often-used simplified diffusive flux, i.e., when
\begin{align} \label{eq::nablavseps}
  -\div(\nu \eps(u))
  \quad \textrm{replaces} \quad
  -\div(\nu \nabla u),
\end{align}
even though an early work~\cite{Falk91} cautions how the lowest order
method of~\cite{CrouzRavia73} can become unstable when doing so.
Such instabilities arise because the larger null space of $\eps$
necessitates increased inter-element coupling (as explained in more
detail below) and are manifested in certain lowest order cases with
insufficient inter-element coupling.  In this work, focusing on the
lowest order case, we identify new stable finite element methods, with
the minimal necessary inter-element coupling, that yield exactly
divergence-free and pressure robust velocities.
New methods based on both the primal and the mixed
formulations are designed.

Yet another reason for focusing on the lowest order case is its
utility in preconditioning. Roughly speaking, a common strategy for
preconditioning high order Stokes discretizations involves combining
local (high order) error dampers via, say block Jacobi or other
smoothers, with a global (low order) error corrector such as multigrid
(or even a direct solver) applied  to the smaller lowest order
discretization.  From this point of view, it is desirable to have
stable low order versions (that remain stable
under~\eqref{eq::nablavseps}) of high order methods for design of
preconditioners, an interesting topic which we shall not touch upon
further in this paper.

To delve deeper into the mechanics of the above-mentioned instability,
consider the kernel of $\eps$, consisting of rigid displacements of the form
$x\to a + b \times x$ with $a,b\in\rr^3$. Reasonable methods
approximating the operator $-\div(\nu \eps(u))$ produce element
matrices whose nullspaces contain these rigid displacements. Ideally, when
these element-wise rigid displacements are subjected to the inter-element
continuity conditions of the discrete velocity space, they should
equal element-wise restrictions of a global rigid displacement on $\Omega$
(which can then be eliminated by boundary conditions).  However, if
the inter-element coupling in the discrete velocity space is so weak
to allow for the existence of a $u$ in it that does not equal a global
rigid displacement on $\Omega$ even though $u|_T$ is a rigid displacement on every
mesh element~$T$, then instabilities can arise~\cite{Falk91}.

The discrete velocity space we have in mind is the lowest order
(piecewise linear) $H(\div)$-conforming Brezzi-Douglas-Marini ($\BDM$)
space~\cite{brezzi2012mixed}.  (A basic premise of this paper is the
unquestionable utility of $H(\div)$-conforming velocity spaces to
obtain exactly divergence-free discrete Stokes velocity fields, well
established in prior works~\cite{cockburn2005locally,
  cockburn2007note, LS_CMAME_2016,mcsI, mcsII}). Hence, to understand
how to avoid the above-mentioned instability while setting velocity in  the $\BDM$
space, we ask the following question: {\em how many coupling degrees of
  freedom} (\dofs) {\em are needed to guarantee that two rigid displacements
$u_\pm$, given respectively on two adjacent elements $T_\pm$,
coincide on the common interface $F = \partial T_+ \cap \partial T_-$?}

\begin{figure}
  \centering
  \includegraphics[width=0.9\textwidth, clip = true, trim = 1cm 0 0 0 ]{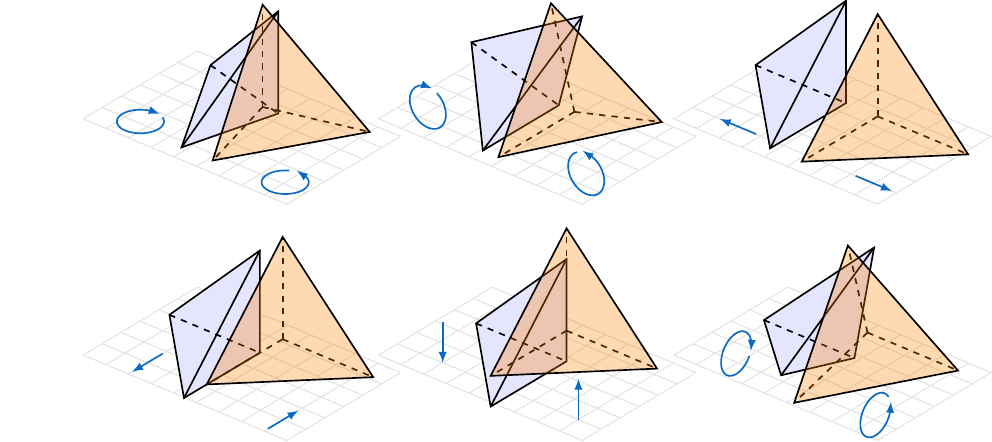}
  \caption{Configurations of adjacent elements after
    deformation by piecewise rigid displacements of two adjacent elements $T_\pm$.}
  \label{fig::motions}
\end{figure}

The pictorial representations of the deformations created by $u_\pm$
in Figure~\ref{fig::motions} lead to the answer. Three of the pictured
deformations are just translations (generated by the $a$-vector in $a
+ b \times x$).  For a unit vector $b$, letting $R^b_\theta$ denote
the unitary operator that performs a counterclockwise rotation by
angle $\theta$ around $b$, it is easy to see that $R^b_\theta x = x +
\theta (b \times x) + \mathcal{O}(\theta^2)$ as $\theta \to 0$.
Therefore the deformation created by the rigid displacement $b \times
x$ can be viewed as an infinitesimal rotation about~$b$. These
deformations are portrayed in Figure~\ref{fig::motions} as rotations
about three linearly  independent~$b$-vectors.  The first row in
Figure~\ref{fig::motions} illustrates deformations generated by
piecewise rigid displacements which are given by two $b$-vectors
coplanar with $F$ and an $a$-vector normal to $F$.  These rigid
displacements are forbidden in the $\BDM$ space. Indeed, 
recall~\cite{brezzi2012mixed} that the $\BDM$ \dofs\ on the facet
  $F$ are given by the linear functionals
  $ u \mapsto \int_F u \cdot n ~q \ds$ for all linear polynomials $q$  on $F$,
  where $n$ is a normal vector on $F$. These represent three \dofs\ illustrated in
  left diagram of Figure~\ref{fig::dofs}.
  If these three \dofs\ coincide for two
rigid displacements $u_{\pm}$, then the corresponding normal component must
be continuous on $F$. This continuity forbids the
above-mentioned deformations to be generated by 
elements of the $\BDM$ space. We summarize this by saying that the  rigid displacements
portrayed in the first row of Figure~\ref{fig::motions}
are ``controlled'' by the three $\BDM$ \dofs~of the facet $F$ which
are illustrated in the left diagram of Figure~\ref{fig::dofs}.

It remains to control the rigid displacements of the second row of
Figure~\ref{fig::motions} using three additional \dofs\ per facet.
To this end, our new methods have two additional spaces: (i) one that
approximates the in-plane components of the velocity on facets,
illustrated in the middle diagram of Figure~\ref{fig::dofs}, used to
control the first two rigid displacements in the second row of
Figure~\ref{fig::motions}; and (ii) a second space, schematically
indicated in the last diagram of Figure~\ref{fig::dofs}, that controls
the third deformation in the second row of Figure~\ref{fig::motions}.
The latter deformation arises from piecewise rigid displacements of the form
$u_\pm = b_\pm \times x$ with $b_\pm$ collinear to $n$, a unit normal
of $F$.  Since $\curl(b_\pm \times x) = 2 b_\pm$, we can make the two
rigid displacements coincide on $F$ by requiring continuity of
$n \cdot \curl u_\pm$. While continuity of $n \cdot \curl u$
certainly holds if $u$ is the exact Stokes velocity, it does not
generally hold for $u$ in $\BDM$. Hence, keeping in view that
$\omega = \curl u$ represents vorticity, {\em we incorporate this
  constraint in our new methods by approximating vorticity $\omega$ in the lowest
  order Raviart-Thomas space.} This is our second additional space.
Its single {\tt dof}  per facet  is shown schematically in the last diagram of
Figure~\ref{fig::dofs}.

\begin{figure}
  \centering
  \includegraphics[width=0.9\textwidth, clip = true, trim = 1cm 0 0 0 ]{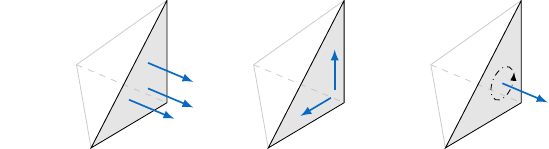}
  \caption{Classification of facet \dofs\ in our new methods into three types:
    (1) normal velocity components in the form of  $\BDM$ facet dofs,
    (2) tangential facet velocities,
    (3) normal vorticity as $\RT$ facet dof. 
  }
  \label{fig::dofs}
\end{figure}

In the first part of the paper we will employ these additional spaces
to construct a novel HDG method to approximate~\eqref{eq::stokes} and
present a detailed stability and error analysis.
HDG methods have become popular ever since its introduction in~\cite{cockburn2009unified} which showed  how interface variables,
or facet variables, 
can be effectively used to construct DG schemes amenable to
static condensation.
In the method presented here, the interface variable approximates the
tangential components of the velocity.
The key technical
ingredient in the analysis that reflects the insight garnered from the
above pictorial discussion is a discrete Korn-like inequality for the
$\BDM$ space
(see Lemma~\ref{lemma::korn} below, 
and the version with interface variables  in
Lemma~\ref{lem::korn_hdg}).

The second part of this work discusses the derivation of a novel mixed
method for the approximation of \eqref{eq::mixedstressstokes} and is
motivated by our previous two papers~\cite{mcsI,mcsII} and the many
other works on discretizing~\eqref{eq::mixedstressstokes} such as
\cite{Farhloulcanadian, MR1231323, MR1464150, MR1934446}. In
\cite{mcsII} we derived the ``\underline{M}ass-\underline{C}onserving \underline{S}tress-yielding'' (MCS)
formulation where the symmetry of $\sigma$ was incorporated in a weak sense
by means of a Lagrange multiplier that approximates
$\omega = \curl u$. While the $\omega$ there was approximated using
element-wise linear (or higher degree) functions {\em without} any
inter-element continuity requirements, the new mixed method we propose
here will approximate $\omega$ in the lowest order Raviart-Thomas
space instead.  The lowest order case that was proved to be stable
in~\cite{mcsII} had nine coupling \dofs\ per facet. We are able
to reduce this number to the minimal six (the dimension of rigid
displacements) in this paper.
This minimal coupling was achieved earlier in~\cite{TaiWinth06} using
a bubble-augmented velocity space which is a subspace of a degree-four
vector polynomial space. Since higher degrees necessitate more
expensive integration rules, we offer our simpler elements as an
alternative.

Other methods that approximate the operator $\div(\nu \nabla u)$,
such as~\cite{CrouzRavia73,LS_CMAME_2016, mcsI}, are able to reduce
the number of coupling \dofs\ per facet even further. Since our focus
here is on methods that approximate $\div(\nu \eps( u))$, we restrict
ourselves to a brief remark on this.  Since the kernel of $\nabla$ (applied to vector fields) is
three dimensional, we expect the minimal number of coupling \dofs\ per
facet to be three when approximating $\div(\nu \nabla u)$. A method
with this minimal coupling was
achieved early by~\cite{CrouzRavia73}.  To also obtain pressure robust and
exactly divergence-free solutions, prior works~\cite{LS_CMAME_2016,
  mcsI} settled for a slightly higher five coupling \dofs\ per facet
in the lowest order case.  It is now known that this can be improved
by employing the technique of ``relaxed
$H(\div, \Omega)$-conformity,''
see~\cite{ledlehrschoe2017relaxedpartI,ledlehrschoe2018relaxedpartII},
which results in a method with the minimal three coupling \dofs\ per facet and yet,
thanks to a simple post-processing, provides optimal convergence
orders and pressure robustness.
While on the subject of coupling \dofs, an 
  explanation of our focus on three-dimensional (3D) domains is in order.
  On two-dimensional (2D)
domains, the space of rigid displacements only has three
dimensions. In the lowest order 2D case, $\BDM$ space provides two coupling
\dofs\ per facet edge, and the space of tangential facet velocities
adds one more coupling degree of freedom. Thus the minimal facet
coupling (of three \dofs) needed to eliminate the rigid
displacements are more immediate in 2D case when compared to the 3D
case, which is why restrict to the 3D case henceforth.

The new HDG method and the new mixed method proposed in this paper
both have the same coupling \dofs, the same velocity convergence orders
and the same structure preservation properties like pressure robustness
and mass conservation. On closer comparison, two advantages of the
mixed method are notable. One is its direct approximation of viscous
stresses. Another is the absence of any stabilization parameters in
it. In fact, in our numerical studies, the conditioning of a matrix
block arising from the parameter-free mixed method was found to be better
than the analogous HDG block for all ranges of the HDG stabilization
parameter we considered.

{\bf{Outline.}} We set up general notation in
Section~\ref{sec::prelim} and continue with a description of the
variational framework used throughout the paper.  Finite element spaces, a discrete
Korn-like inequality, and resulting norm equivalences are introduced
in Section~\ref{sec::fem_ne_io}. A list of interpolation operators
into these spaces and their properties with references to literature
can also be found there. In Section~\ref{sec::hdg} we introduce and
analyze the HDG method for the primal set of equations
\eqref{eq::stokes} and in Section~\ref{sec::mcs} we do the same for
the MCS method for the mixed set of equations
\eqref{eq::mixedstressstokes}. Finally, in Section \ref{sec::numex} we
perform numerical experiments to illustrate and complement our
theoretical findings.

\section{Notation and weak forms} \label{sec::prelim}

 By $\mm$ we denote the vector space of real $3 \times 3$ matrices and
by $\kk$ we denote the vector space of $3 \times 3$ skew symmetric
matrices, i.e., $\kk = \skw(\mm)$, where $\skw \tau
= \frac{1}{2} (\tau - \tau^T)$ for $\tau \in \mm$. Further, let
$\dd = \dev{(\mm)}$. To indicate vector and matrix-valued functions on
$\Omega$, we include the range in the notation, thus while
$L^2(\Omega) = L^2(\Omega, \rr)$ denotes the space of square
integrable and weakly differentiable $\rr$-valued functions on
$\Omega$, the corresponding vector and matrix-valued function spaces
are defined by
$L^2(\Omega, \rr^3) := \left\{ u : \Omega \to \rr^3 \big| \;u_i \in
L^2(\Omega)\right\}$ and $  L^2(\Omega,\mm ) := \left\{ \tau : \Omega
\to {\mm} \big|\; \tau_{ij} \in L^2(\Omega)\right\},$ respectively.
For any ${\tilde{\Omega}} \subseteq \Omega$,  we
denote by $(\cdot,\cdot)_{\tilde{\Omega}}$ the inner product on $L^2({\tilde{\Omega}})$
(or its vector- or matrix-valued versions). Similarly, we extend this
notation and write $\| \cdot \|_{{\tilde{\Omega}}}$ for the corresponding
$L^2$-norm of a (scalar, vector, or matrix-valued) function on the domain ${\tilde{\Omega}}$. In the case ${\tilde{\Omega}} =
\Omega$ we will omit the subscript in the inner product, i.e. we have
$(\cdot,\cdot)_{\tilde{\Omega}} = (\cdot,\cdot)$ and we will use the notation
$\| \cdot \|_0 = \| \cdot \|_\Omega$.

 In addition to the differential operators we have already used, 
 $\nabla, \eps, \curl$, 
we understand $\div \Phi $ as either
$\sum_{i=1}^3 \partial_i \Phi_i$ for a vector-valued function $\Phi$,
or the row-wise divergence $\sum_{j=1}^3 \partial_j \tau_{ij}$ for a
matrix-valued function $\tau$. 
In addition to the standard
Sobolev spaces $H^m(\Omega)$ for any $m\in \rr$, we shall also use the
well-known spaces
$H(\div,\Omega) = \{ v \in L^2(\Omega, \rr^3):
 \div v  \in L^2(\Omega) \}$
and $ H(\curl,\Omega) = \{ v \in L^2(\Omega, \rr^3):
 \curl v \in L^2(\Omega,\rr^3) \}.$
 We use $H^1_{0,B}(\Omega)$, $H_{0,B}(\div, \Omega)$ and
 $H_{0,B}(\curl, \Omega),$ to denote the spaces of functions whose
 trace, normal trace and tangential trace respectively vanish on
 $\Gamma_B$, for $B \in \{ D, N \}$.
The only somewhat nonstandard Sobolev space that we shall use is 
\begin{align}
  \label{eq:Hcurldiv}
  H(\curl \div, \Omega) := \{ \tau \in L^2(\Omega,\dd): \div \tau  \in H_{0,D}(\div, \Omega)^*\},
\end{align}
where $H_{0,D}(\div, \Omega)^*$ is the dual space of
$H_{0, D}(\div, \Omega)$. In the case $\Gamma_D = \partial\Omega$, as
proved in \cite{mcsI}, the dual of $H_{0, D}(\div, \Omega)$ equals
$H^{-1}(\curl, \Omega)$, so the condition that
$\div \tau  \in H_{0,D}(\div, \Omega)^*$ in \eqref{eq:Hcurldiv}
is the same as requiring that $\curl \div \tau \in H^{-1}(\Omega)$.
This explains the presence of the operator ``$\curl \div$'' in the
name of the space in~\eqref{eq:Hcurldiv}.

We denote by $\mesh$ a quasiuniform and shape regular triangulation of
the domain $\Omega$ into tetrahedra. Let $h$ denote the maximum of the
diameters of all elements in $\mesh$. Throughout this work we write $A
\sim B$ when there exist two constants $c,C >0$ {\em independent of
the mesh size $h$ as well as the viscosity $\nu$} such that $cA \le B
\le C A$. Similarly, we use the notation $A \lesssim B$ if there
exists a similar constant $C$  (independent of $h$ and $\nu$) such
that $A \le CB$. Henceforth we assume that $\nu$ is a constant. Due to
quasiuniformity we have $h \sim \textrm{diam}(T)$ for any $ T\in
\mesh$. The set of element interfaces and boundaries is denoted by
$\facets$. 
{This set is further split into facets on the
  Dirichlet  boundary,
  $\facets_D = \{ F \in \facets: F \subset \Gamma_D\}$,
  facets on the  Neumann boundary $
  \facets_N = \{ F \in \facets : F \subset \Gamma_N\}$ and
  facets in the interior $\facets^{0} = \facets \setminus ( \facets_N  \cup \facets_D)$.
  Also let $\facets_{0, D}  = \facets_0 \cup \facets_D$.}

For piecewise smooth functions $v$ on the mesh, $\jump{v}$ and
$\mean{v}$ are functions on $\facets$ whose values on each interior
facet equal the jump (defined up to a sign) of $v$ and the mean of the
values of $v$ from adjacent elements. On boundary facets, they are
both  defined to be the trace of $v$. On each element boundary, and
similarly on each facet on the global boundary we denote by $n$ the
outward unit normal vector. Then the normal and tangential trace of a
smooth enough  vector field $v$ is given by 
\begin{align*}
 v_n = v\cdot n \quad \textrm{and} \quad v_t = v - v_n n.
\end{align*}
Accordingly, the normal trace is a scalar function and
the tangential trace is a vector function. In a similar manner we
introduce the normal-normal ($nn$) trace
and the normal-tangential $(nt)$ trace of a
matrix valued function $\tau$ by
\begin{align*}
  \tau_{nn} := \tau : n \otimes n  = n^\trans \tau n \quad \textrm{and} \quad \tau_{nt} = (\tau n)_t.
\end{align*} 
For any $\tilde{\Omega} \subseteq \Omega$, we denote by
$P^k(\tilde{\Omega}) = P^k(\tilde{\Omega},\rr) $ the set of polynomials of degree at
most $k$, restricted to $\tilde{\Omega}$.
Let $P^k(\tilde{\Omega},\rr^3)$ and $P^k(\tilde{\Omega},\mm)$ denote the
analogous vector- and matrix-valued versions whose components are in
$P^k(\tilde{\Omega})$. With respect to these spaces we then define
$\Pi^k_{\tilde{\Omega}}$, the $L^2(\tilde{\Omega})$-projection into the space
$P^k(\tilde{\Omega})$ or its vector- or matrix-valued versions.
We omit
subscript from $\Pi^k_{\tilde{\Omega}}$  if it is clear from 
context. For the space of functions the restrictions of which are in
$P^k(T)$ for all $T\in\mesh$ we write simply $P^k(\mesh)$. The
analogous convention holds for $H^k(\mesh), L^2(\facets)$, etc. 

The standard \cite{girault2012finite} variational formulation of
\eqref{eq::stokes} is to find
$(u,p)\in H^1_{0,D}(\Omega, \rr^3)\times L^2(\Omega)$ such that
\begin{subequations} \label{eq::stokesweak}
  \begin{alignat}{2}
    \nu (\eps(u),\eps(v)) - (\div v,p) &= (f,v) && \quad\Forall v
    \in H^1_{0,D}(\Omega, \rr^3), \label{eq::honediffusive} \\
     - (\div u,q) &= 0 && \quad\Forall q \in L^2(\Omega).
     \label{eq::incomp}
  \end{alignat}
\end{subequations}
However our novel methods use 
$H(\div)$-conforming spaces for the
approximation of the velocity $u$. 
Another weak form where velocity is set in  $H(\div)$ was given
in~\cite{mcsI,mcsII,lederer2019mass} using 
$
\Sigma^{\operatorname{sym}} := \{ \tau \in H(\curl \div, \Omega):  \;\tau = \tau^\trans\}.
$
It finds $(\sigma, u, p) \in
\Sigma^{\operatorname{sym}} \times H_{0,D}(\div, \Omega) \times L^2(\Omega)$
such that
\begin{subequations} \label{eq::mixedstressstokesweak}
  \begin{alignat}{2}
    (\nu^{-1} \sigma, \tau) + \langle \div \tau,  u\rangle_{\div} &
    = 0 &&\quad\Forall \tau \in \Sigma^{\operatorname{sym}}, \\
    -\langle \div \sigma ,  v\rangle_{\div} - (\div v, p) & = (f, v)
    &&\quad\Forall v \in H_{0,D}(\div, \Omega),    
    \\
    -(\div u, q) &=0 
    &&\quad\Forall q \in L^2(\Omega),
\end{alignat}                     
\end{subequations}
where
$
\Sigma^{\operatorname{sym}} := \{ \tau \in H(\curl \div, \Omega):  \;\tau = \tau^\trans\}.
$
Here $\langle \cdot, \cdot \rangle_{\div}$ denotes the duality pairing on
$H_{0,D}(\div, \Omega)^* \times H_{0,D}(\div, \Omega)$. Note that
since $\sigma \in L^2(\Omega, \dd)$ we have $\trace{\sigma} = 0$ which
is motivated by \eqref{eq::mixedstressstokes-a}. In
\cite{lederer2019mass}, a detailed well-posedness analysis of
\eqref{eq::mixedstressstokesweak} was provided, but in this paper,  \eqref{eq::mixedstressstokesweak} will serve merely to motivate
the new mixed method of Section~\ref{sec::mcs}.

\section{The finite elements used and their properties}\label{sec::fem_ne_io}

In this preparatory section, we define the standard finite
  element spaces used to construct our methods, their natural
  interpolators, and a number of discrete norm equivalences revealing
  equivalent norms involving piecewise
  $\veps(\cdot)$. Lemma~\ref{lem::korn_hdg} below will be used in the
  analysis of the HDG scheme while the analysis of the MCS scheme will additionally need 
  Lemmas~\ref{lemma::normequi_tech}--\ref{lemma::normequi}.
  We begin with the finite element spaces used in this paper:
\begin{subequations} \label{def::fespaces}
  \begin{align}
    V_h &:= \{v_h \in H_{0,D}(\div,\Omega): v_h|_T \in P^1(T,\rr^3)\}, \label{def::fespaces_V} \\\
    \hat V_h &:= \{ \hat v_h \in L^2(\facets, \rr^3):
               \hat v_h = 0 \textrm{ on } \Gamma_D, \text{ and for all }
               F \in \facets,
    \nonumber \\
    & \hspace{3.85cm} \hat v_h|_F \in P^0(F,\rr^3) \text{ and }  (\hat v_h)_n|_F = 0  \}, \label{def::fespaces_Vhat}\\
    W_h &:= \{ \eta_h \in H_{0,D}(\div, \Omega): \eta_h|_T \in P^0(T, \rr^3) + x  P^0(T,\rr)\Forall T \in \mesh \},\label{def::fespaces_W}\\
    \Sigma_h &:= \{ \tau_h \in L^2(\Omega, \dd): \tau_h|_T \in P^1(T, \dd), (\tau_h)_{nt}|_F \in P^0(F, n_F^\perp) \}, \label{def::fespaces_Sigma}\\
    Q_h &:= P^0(\mesh).\label{def::fespaces_Q}
  \end{align}
\end{subequations}
Note that for any $\tau_h \in \Sigma_h$, on a facet $F$, 
  $(\tau_h)_{nt}$ is a constant
  function on $F$ taking values in $n_F^\perp$, where $n_F^\perp$ denotes the
  orthogonal complement of $n_F$, a unit normal of $F$. This is indicated by the
  notation  $(\tau_h)_{nt} \in P^0(F,n_F^\perp)$  in~\eqref{def::fespaces_Sigma}.
Also any $\hat v_h \in \hat V_h$ is tangential and 
takes values in $n_F^\perp$
on each facet~$F$.
Note also that $V_h$, which equals $H_{0,D}(\div,\Omega) \cap \BDM$ in
the notation of \S\ref{sec:introduction}, is the lowest order
Brezzi-Douglas-Marini space while $W_h$ is the lowest order
Raviart-Thomas space~\cite{brezzi2012mixed}.
The space $\Sigma_h$ is a discontinuous version of the ``$nt$-continuous''
space introduced in~\cite{mcsI},
for which simple shape functions were exhibited there.
All of these finite element spaces are obtained by mappings from a
single reference finite element.  (All these maps extend to curvilinear
elements, although we restrict to affine equivalent elements in our
analysis here.)  The maps are compatible with the degrees of freedom of the
spaces. (For $\Sigma_h$, the appropriate map is given in \cite{mcsI} and
compatibility with degrees of freedom is proved in
\cite[Lemma~5.7]{mcsI}, while for the other spaces, the mappings are
standard.)  In the case of $V_h$ and $W_h$, the maps are Piola maps
which also preserve divergence-free subspaces.

\subsection{A discrete Korn-type inequality}
\label{ssec:discrete-korn-type}

Korn inequalities for piecewise functions were given in
\cite[Theorem~3.1]{brenner_korn}. A further refinement  was given
in \cite[Theorem~3.1]{MardaWinth06}. To describe it, let $\PiR$ denote
the facet-wise $L^2$ projection onto
$\RF := \{t + \alpha~ n\times x : t \in n^\perp,~\alpha\in\rr\}$, the
space of tangential components (on a facet~$F$) of the rigid
displacements (or simply the space of two-dimensional rigid
displacements on $F$).  Let
$ H^1_{n, D}(\mesh,\rr^3) := \{ u: u \in H^1(T,\rr^3)$ for all elements
$T \in \mesh$ and $ {\jump{u}}_n = 0$ on all facets {$F \in \facets_{0, D} \}.$} A minor modification of the proof of
\cite[Theorem~3.1]{MardaWinth06} shows that 
\begin{align}\label{eq::korn_c}    
  \| \nabla u \|_\mesh^2
  & \;\lesssim\;
    \| \eps(u) \|_\mesh^2+  h^{-1}
    \big\| \PiR\jump{u}_t \big\|_{{\facets_{0, D}}}^2
    \quad\text{ for all } 
    u \in
    H^1_{n, D}(\mesh, \rr^3).
\end{align}
Here and throughout, we use $\| \cdot \|_\mesh^2$
to abbreviate
$\sum_{T \in \mesh} \| \cdot \|_T^2$  with the
understanding that any derivative operators in the argument of these
norms are evaluated summand by summand, e.g., the gradient and
$\varepsilon$ are evaluated element by element in~\eqref{eq::korn_c}.
This notation is similarly extended to facets, so 
$\| \cdot \|_{{\facets_{0, D}}}^2
=\sum_{F \in \facets_{0, D}} \| \cdot \|_F^2$.
Note how normal components are controlled in~\eqref{eq::korn_c}
through the space $H^1_{n, D}(\T, \rr^3),$ while tangential components
are controlled through the jumps $\jump{u}_t$.
The next result shows that
a part of the right hand side of~\eqref{eq::korn_c}
can be traded for 
a norm of  the jump of $n \cdot \curl u$ when $u$ is
in~$V_h$.

\begin{lemma}\label{lemma::korn}
  For all $u_h \in V_h$,
  \begin{align}
    \begin{split}\label{eq::korn_d}
       \|\eps(u_h)\|_\mesh^2
       +
       h^{-1}\big\|\PiR\jump{u_h}_t\big\|_{{\facets_{0, D}}}^2
       \sim\;
       \|\eps(u_h)\|_\mesh^2
       & + 
        h^{-1}\big\|\Pi^0\jump{u_h}_t\big\|_{{\facets_{0, D}}}^2
        \\
        & 
      + h\big\|\jump{\curl u_h}_n\big\|_{{\facets_{0, D}}}^2.
    \end{split}
  \end{align}
\end{lemma}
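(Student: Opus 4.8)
The plan is to reduce the claimed equivalence \eqref{eq::korn_d} to a facet-local comparison, exploiting that $\|\eps(u_h)\|_\mesh^2$ appears identically on both sides. On a fixed facet $F$ with unit normal $n$ and centroid $\xF$, I would first split $\RF$ as the $L^2(F)$-orthogonal direct sum of the constant tangential fields and the one-dimensional rotational mode $n\times(x-\xF)$; orthogonality holds because $\int_F (x-\xF)\,ds = 0$. Consequently $\PiR\jump{u_h}_t = \Pi^0\jump{u_h}_t + \alpha_F\,n\times(x-\xF)$ for a scalar $\alpha_F$, and by Pythagoras $\|\PiR\jump{u_h}_t\|_F^2 = \|\Pi^0\jump{u_h}_t\|_F^2 + \alpha_F^2\,\|n\times(x-\xF)\|_F^2$, where $\|n\times(x-\xF)\|_F^2 \sim h^4$ by shape regularity. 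After cancelling the common terms, the lemma reduces to comparing $h^3\alpha_F^2$ with $h\,\|\jump{\curl u_h}_n\|_F^2$ up to $\|\eps(u_h)\|_\mesh^2$, where I also use that $\curl u_h$ is elementwise constant, so $\|\jump{\curl u_h}_n\|_F^2 \sim h^2\,\jump{\curl u_h}_n^2$.

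The \emph{crux} is to relate $\alpha_F$ to $\jump{\curl u_h}_n$. For this I would invoke the elementwise identity from \eqref{eq:kappa-identities} to write, on each element $T$, $u_h = u_h(\xF) + \eps(u_h)(x-\xF) + \tfrac12\,\curl u_h \times (x-\xF)$, splitting $u_h|_T$ into a rigid displacement and a pure-strain field. Computing the rotational coefficient facet by facet, the translation contributes nothing; for the rotation $b\times(x-\xF)$ with $b = \tfrac12\curl u_h$, writing $b = b_n\,n + b_t$ shows that the in-plane part $b_t\times(x-\xF)$ is normal on $F$ and hence tangentially invisible, so the rigid part contributes exactly $b_n = \tfrac12(\curl u_h)_n$; and the strain field $\eps(u_h)(x-\xF)$ contributes a term that is a bounded linear functional $\rho_F(\eps(u_h|_T))$ of the (constant) strain, with $|\rho_F(\eps)| \lesssim |\eps|$. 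Taking the jump across $F$ yields $\alpha_F = \tfrac12\jump{\curl u_h}_n + \rho_F(\jump{\eps(u_h)})$ with $|\rho_F(\jump{\eps(u_h)})| \lesssim |\eps(u_h^+)| + |\eps(u_h^-)|$.

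Both inequalities then follow by the triangle inequality and scaling. For the bound $\lesssim$, I obtain $\alpha_F^2 \lesssim \jump{\curl u_h}_n^2 + |\eps(u_h^+)|^2 + |\eps(u_h^-)|^2$, whence $h^3\alpha_F^2 \lesssim h\,\|\jump{\curl u_h}_n\|_F^2 + \|\eps(u_h)\|_{T_+}^2 + \|\eps(u_h)\|_{T_-}^2$, using $h^3|\eps(u_h|_T)|^2 \sim \|\eps(u_h)\|_T^2$ (constant strain, $|T|\sim h^3$). For $\gtrsim$, solving for the curl gives $\jump{\curl u_h}_n^2 \lesssim \alpha_F^2 + |\eps(u_h^+)|^2 + |\eps(u_h^-)|^2$, and $h^3\alpha_F^2 \lesssim h^{-1}\|\PiR\jump{u_h}_t\|_F^2$ from the Pythagorean splitting above. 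Summing over all facets (each element belonging to boundedly many) and reinstating $\|\eps(u_h)\|_\mesh^2$ on both sides produces the stated equivalence.

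The step I expect to be the main obstacle is controlling the strain functional $\rho_F$ with a constant uniform in $h$ and in the facet shape. I would address this by a scaling argument to a fixed reference element, under which $\rho_F$ reduces to the reference facet's scaled second moments, so that shape regularity yields the uniform bound $|\rho_F(\eps)|\lesssim|\eps|$. Some care is also needed in tracking the orientation of $n$ in the jump so that the rigid-rotation coefficients of $T_+$ and $T_-$ assemble into precisely $\jump{\curl u_h}_n$.
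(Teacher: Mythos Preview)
Your proposal is correct and follows essentially the same route as the paper: the paper also applies the Pythagorean splitting $\|\PiR\jump{u_h}_t\|_F^2 = \|\Pi^0\jump{u_h}_t\|_F^2 + \|(\PiR-\Pi^0)\jump{u_h}_t\|_F^2$, expands $u_h$ on each element via $u_h(x)=u_h(\xF)+\eps(u_h)(x-\xF)+\tfrac12\curl u_h\times(x-\xF)$, and arrives at the identity $(\rF,w)_F=(\rF,\eps(w)(x-\xF))_F+\tfrac12(|x-\xF|^2,n_F\cdot\curl w)_F$, which is precisely your decomposition $\alpha_F=\tfrac12\jump{\curl u_h}_n+\rho_F(\jump{\eps(u_h)})$ rewritten. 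The only cosmetic difference is that the paper keeps the strain contribution explicit (as an inner product against $\rF$) rather than packaging it as an abstract functional $\rho_F$, so the uniform bound you flag as the ``main obstacle'' is obtained there directly by Cauchy--Schwarz and local scaling rather than by a reference-element argument.
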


\begin{proof}
  By Pythagoras theorem, 
  \[\big\|\PiR \jump{u_h}_t\big\|^2_F =
  \big\|\Pi^0\jump{u_h}_t\big\|^2_F +
  \big\|(\PiR-\Pi^0)\jump{u_h}_t\big\|^2_F.\]
  Hence  \eqref{eq::korn_d} would follow
  once we prove that for all $F \in \facets$ and all $u_h \in V_h$, 
  \begin{equation}\label{eq::korn_ts}
    \begin{split}
    h\big\|\jump{\curl u_h}_n&\big\|_F^2
    + \sum_{T \in \T_F}\big\|\eps(u_h)\big\|^2_T 
    \;\sim\;
    h^{-1}\big\|(\PiR-\Pi^0)\jump{u_h}_t\big\|_F^2
    + \sum_{T\in \T_F}\big\|\eps(u_h)\big\|^2_T
    \end{split},
  \end{equation}
  where $\T_F = \{ T \in \T: F \subset \partial T\}$.

  To
  prove~\eqref{eq::korn_ts}, first note that, restricted to every
  facet~$F$, $\PiR-\Pi^0$ is the $L^2(F)$-orthogonal projection onto
  the one dimensional span of $r_F = n_F \times (x - x_F)$ where
  $\xF=\frac{1}{|F|}\int_Fx\dx$ is the barycenter of
  $F$.
  Computing this one-dimensional projection,
  $(\PiR-\Pi^0)\jump{u_h}_t\big|_{F}
  = (\rF, \jump{u_h})_F \,r_F/\|\rF\|_F^2.$
  Therefore,
  \begin{equation}
    \label{eq::korn_pis_equality}
    \big\|(\PiR-\Pi^0)\jump{u_h}_t\big\|_F
    = \frac{|(\rF, \jump{u_h})_F|}{\|\rF\|_F}.
  \end{equation}
  
  To  simplify the numerator of the last term,
  let $w$ equal $u_h|_T$ for some $T \in \T_F$. We claim
  that
  \begin{equation}
    \label{eq:ident_rF}
    (r_F, w)_F
    = (r_F, \veps(w) (x - x_F))_F + \frac 1 2 (|x - x_F|^2, n_F \cdot \curl w)_F.
  \end{equation}
  To see why, recalling that $w$ is linear in $T$ (and hence in $F \subset \d T$), for any $x \in F$, 
  \begin{align*}
    w(x) & = w(x_F) + \nabla w\, ( x- x_F)
    \\
    & = w(x_F) + \veps(w) (x- x_F) + \frac 1 2 \curl w \times (x-x_F),
  \end{align*}
  where we have used~\eqref{eq:kappa-identities}. Since $r_F$ is
  orthogonal to constants on $F$,
  \[
    (r_F, w) =
    (r_F, \veps(w) (x - x_F))_F + \frac 1 2
    (n_F \times (x - x_F),  \curl w \times (x - x_F))_F.
  \]
  Now, since $(x-\xF)\perp n$ for any $x \in F$, using the identity
  $(a\times b) \cdot (c\times b) = |b|^2(a\cdot c) - (a\cdot b)(c\cdot
  b)$ to simplify the  last term, we obtain~\eqref{eq:ident_rF}.

  The equivalence of~\eqref{eq::korn_ts} is a consequence of  the identity 
  \begin{equation}
    \label{eq:ident_Pi_eps_curl}
    \big\|(\PiR-\Pi^0)\jump{u_h}_t\big\|_F
    = \frac{(\rF, \jump{\veps(u_h)} (x - x_F))_F}{\|\rF\|_F}
    + \frac{( |x - x_F|^2, \jump{\curl u_h}_n )_F}{2\|\rF\|_F},    
  \end{equation}
  immediately obtained by combining~\eqref{eq::korn_pis_equality}
  and~\eqref{eq:ident_rF}. Indeed, by applying Cauchy-Schwarz
  inequality to the terms on the right hand side
  of~\eqref{eq:ident_Pi_eps_curl}, simple local scaling arguments give
  $h^{-1}\big\|(\PiR-\Pi^0)\jump{u_h}_t\big\|_F^2 \lesssim \sum_{T \in
    \T_F} \| \veps(u_h) \|_T^2 + h\big\|\jump{\curl u_h}_n\big\|_F^2,
  $ thus proving one side of equivalence in~\eqref{eq::korn_ts}. To prove
  the other side, we begin by 
  noting that $\curl (u_h)$ is constant on each element, so
  \begin{align*}
    h^{1/2} \big\|\jump{\curl u_h}_n\big\|_F
      & \lesssim  h^{-1/2}
        \frac{( |x - x_F|^2, \jump{\curl u_h}_n )_F}{2\|\rF\|_F}
    \\
      & = h^{-1/2} \left(
        \big\|(\PiR-\Pi^0)\jump{u_h}_t\big\|_F
        - \frac{(\rF, \jump{\veps(u_h)} (x - x_F))_F}{\|\rF\|_F}
        \right)
    \\
      & \lesssim
        h^{-1/2} \left\|(\PiR-\Pi^0)\jump{u_h}_t\right\|_F
        +\sum_{T \in \T_F}\|\eps(u_h)\|_T,
  \end{align*}
  where we have used~\eqref{eq:ident_Pi_eps_curl} and local scaling
  arguments again.  Squaring both sides and applying Young's
  inequality, \eqref{eq::korn_ts} is
  proved.
  \qqed
\end{proof}

\subsection{Norm equivalences}

The product space for the kinematic variables is given by $U_h
:= V_h \times \hat V_h \times W_h$. For the analysis we define the  norms 
\begin{align*}
  \| u_h, \hat u_h \|^2_{\nabla}
  &:=
    \| \nabla u_h \|_\mesh^2 + h^{-1} \| \Pi^0(u_h - \hat u_h)_t\|^2_{\partial \T},
  \\
  \nhe{u_h, \hat u_h, \omega_h }^2
  &:= \| \eps(u_h) \|_\mesh^2 + h^{-1} \| \Pi^0 (u_h - \hat u_h)_t\|^2_{\partial \T} + h \| (\curl u_h - \omega_h)_n\|_{\partial \T}^2, \\    
  \| u_h, \hat u_h, \omega_h \|^2_{\eps}
  &:= \| \eps(u_h) \|_\T^2  + h^{-1} \| \Pi^0(u_h - \hat u_h)_t\|^2_{\partial \T} + \| \curl u_h - \omega_h\|_\T^2, \\
  \| u_h, \hat u_h, \omega_h \|^2_{U_h}
  &:= \| \dev \nabla u_h - \Pi^0 \kappa(\omega_h) \|_\T^2 + h^{-1} \| \Pi^0 (u_h - \hat u_h)_t\|^2_{\partial \T}, 
\end{align*}
where we have used $\| \cdot \|_{\d \T}^2$ to abbreviate
$\sum_{T \in \mesh} \| \cdot \|_{\d T}^2$.  We will shortly establish
relationships between these norms (Lemma~\ref{lemma::normequi}).  That
these are all norms on $U_h$ may not be immediately obvious, but follows from
Lemma~\ref{lem::korn_hdg} below (where we critically use that $\hat u_h$ is
single valued on facets).  As we shall see later, $ \nhe{\cdot}$ is
the natural norm to analyze the new HDG method in \S\ref{sec::hdg},
while $\| \cdot \|_\veps$ features in the analysis of the MCS method
in \S\ref{sec::mcs}. All the above norms involve the interface
variable $\hat u_h,$ so they may be referred to as ``HDG-type''
norms. In contrast, ``DG-type'' norms were used in
Subsection~\ref{ssec:discrete-korn-type}, where
Lemma~\ref{lemma::korn} and~\eqref{eq::korn_c} imply
\begin{equation}
  \label{eq:korn-inter}
  \| \nabla u_h \|_{\mesh} 
  \;\lesssim\;        \|\eps(u_h)\|_\mesh^2 + 
      h^{-1}\big\|\Pi^0\jump{u_h}_t\big\|_{\facets_{0, D}}^2
      + h\big\|\jump{\curl u_h}_n\big\|_{\facets_{0, D}}^2.  
\end{equation}
A similar discrete Korn-type inequality also holds for HDG-type norms,
as seen in the next lemma. 

\begin{lemma}\label{lem::korn_hdg}
  For all $(u_h, \hat{u}_h, \omega_h)\in U_h$, we have  the Korn-like inequality
  \begin{align}\label{eq::korn_hdg}
    \| u_h, \hat{u}_h \|_{\nabla} \lesssim
    \nhe{u_h, \hat{u}_h, \omega_h}.
  \end{align}
  The reverse inequality holds in the sense that
  for any $(u_h, \hat{u}_h)\in V_h\times\hat{V}_h$
  there exists a $\omega_h\in W_h$ such that
  \begin{align}\label{eq::korn_hdg_reverse}
    \nhe{u_h, \hat{u}_h, \omega_h} \lesssim \| u_h, \hat{u}_h \|_{\nabla}.
  \end{align}
\end{lemma}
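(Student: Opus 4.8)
The plan is to treat the two inequalities separately, exploiting in each the single-valuedness of the facet unknowns. Two observations streamline both directions. First, the term $h^{-1}\|\Pi^0(u_h-\hat u_h)_t\|^2_{\partial\T}$ appears verbatim in both $\|u_h,\hat u_h\|_\nabla^2$ and $\nhe{u_h,\hat u_h,\omega_h}^2$, so it may be cancelled and one only needs to control the remaining terms. Second, since $\eps(u_h)$ is the symmetric part of $\nabla u_h$, the bound $\|\eps(u_h)\|_\mesh\le\|\nabla u_h\|_\mesh$ is available for free.

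For the forward inequality \eqref{eq::korn_hdg}, the strategy is to reduce the HDG-type facet terms to the DG-type jump terms in \eqref{eq:korn-inter} and then invoke that estimate. Because $\hat u_h\in\hat V_h$ is single valued and tangential on each facet, on an interior facet $F=\partial T_+\cap\partial T_-$ one has $\jump{u_h}_t=(u_h-\hat u_h)_t|_{T_+}-(u_h-\hat u_h)_t|_{T_-}$, while on Dirichlet facets $\hat u_h=0$ makes $\jump{u_h}_t=(u_h-\hat u_h)_t$; a triangle inequality then gives $h^{-1}\|\Pi^0\jump{u_h}_t\|^2_{\facets_{0, D}}\lesssim h^{-1}\|\Pi^0(u_h-\hat u_h)_t\|^2_{\partial\T}$. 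Likewise, since $\omega_h\in W_h\subset H_{0,D}(\div,\Omega)$ has single-valued normal trace across interior facets and vanishing normal trace on $\Gamma_D$, one has $\jump{\curl u_h}_n=\jump{\curl u_h-\omega_h}_n$, whence $h\|\jump{\curl u_h}_n\|^2_{\facets_{0, D}}\lesssim h\|(\curl u_h-\omega_h)_n\|^2_{\partial\T}$. Substituting both bounds into \eqref{eq:korn-inter} produces exactly $\nhe{u_h,\hat u_h,\omega_h}^2$ on the right, and re-adding the common tangential term closes this direction.

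For the reverse inequality \eqref{eq::korn_hdg_reverse}, the crux is to construct $\omega_h\in W_h$ so that the curl-defect is controlled by $\|\nabla u_h\|_\mesh$. Since $u_h$ is piecewise linear, $\curl u_h$ is element-wise constant, so its normal component is a (possibly two-valued) constant on each facet. I would define $\omega_h$ through its lowest order Raviart--Thomas facet degrees of freedom, prescribing $(\omega_h)_n=\mean{(\curl u_h)_n}$ on interior facets, $(\omega_h)_n=(\curl u_h)_n$ on Neumann facets, and $(\omega_h)_n=0$ on Dirichlet facets (as forced by $W_h\subset H_{0,D}(\div,\Omega)$). Then $(\curl u_h-\omega_h)_n=\pm\tfrac12\jump{\curl u_h}_n$ on interior facets and vanishes on Neumann facets, giving $h\|(\curl u_h-\omega_h)_n\|^2_{\partial\T}\lesssim h\|\jump{\curl u_h}_n\|^2_{\facets_{0, D}}$. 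It remains to bound this jump by $\|\nabla u_h\|_\mesh^2$, which follows from a direct scaling estimate: using that $\curl u_h$ is element-wise constant and pointwise dominated by $\nabla u_h$, together with $|F|\sim h^2$ and $|T|\sim h^3$, one gets $h\|\jump{\curl u_h}_n\|^2_{\facets_{0, D}}\lesssim\|\nabla u_h\|_\mesh^2$. Combined with $\|\eps(u_h)\|_\mesh\le\|\nabla u_h\|_\mesh$ and the common term, this yields \eqref{eq::korn_hdg_reverse}.

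The step I expect to demand the most care is the facet-to-element-boundary reduction in the forward direction: one must use that $\hat u_h$ is tangential and single valued, so that the two element contributions to $\jump{u_h}_t$ genuinely pair against a common value, and that $\omega_h\in H(\div)$ supplies the normal continuity needed to rewrite $\jump{\curl u_h}_n$. This is precisely where the design of the spaces $\hat V_h$ and $W_h$ enters, and the boundary-facet cases (where $\jump{\cdot}$ degenerates to a trace) require careful bookkeeping of signs and of which facets contribute, matched against the definitions of $\facets_{0, D}$ and the $H_{0,D}$ constraints.
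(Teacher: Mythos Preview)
Your proposal is correct and follows essentially the same approach as the paper's own proof: both directions exploit the single-valuedness of $\hat u_h$ and of $(\omega_h)_n$ to pass between HDG-type and DG-type facet terms, invoke \eqref{eq:korn-inter} for the forward inequality, and for the reverse inequality construct $\omega_h\in W_h$ via its Raviart--Thomas facet degrees of freedom as the facet-wise average of $(\curl u_h)_n$ (zero on $\Gamma_D$), then finish with the local scaling bound $h\|\jump{\curl u_h}_n\|^2_{\facets_{0,D}}\lesssim\|\curl u_h\|_\T^2\le\|\nabla u_h\|_\T^2$.
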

\begin{proof}
  To prove~\eqref{eq::korn_hdg},
  first note that
  on an interior facet $F = \d T_+ \cap \d T_- \in \facets_0$ shared
  by two elements $T_\pm \in \T$, letting $u_h^\pm = u_h|_{T_\pm}$,
  since $\hat u_h$ is single valued on $F$, we have
  $ u_h^+ - u_h^- = (u_h^+ - \hat u) - (u_h^- - \hat u).$ Moreover, on
  a facet $F \in \facets_D$, $ u_h|_F = (u_h - \hat u_h)|_F$.  Thus by
  triangle inequality,
  \begin{align}
    \label{eq:100}
    \big\|\Pi^0\jump{u_h}_t\big\|_{\facets_{0, D}}^2
    & \le 2\|\Pi^0(u_h - \hat{u}_h)_t\|_{\partial \T}^2,
  \end{align}
  where we have increased the right hand side to include facets on
  $\Gamma^N$ also.  Similarly, since the normal component of the given
  $\omega_h \in W_h$ is continuous across $F \in \facets_0$ and zero on
  $F \in \facets_D$,
  \begin{align}
    \label{eq:101}
    \big\|\jump{\curl u_h}_n\big\|_\facets^2
    \le 2 \left\|(\curl u_h - \omega_h)_n\right\|_{\partial \T}^2.
  \end{align}
  Using~\eqref{eq:100} and~\eqref{eq:101} in~\eqref{eq:korn-inter},
  we obtain the estimate~\eqref{eq::korn_hdg}.
  
  To prove~\eqref{eq::korn_hdg_reverse}, consider a  function
  $\omega_h\in W_h$ satisfying
  \[
    \begin{aligned}
      n \cdot \omega_h & =   n \cdot \{ \curl u_h\}
      && \text{ on } \d T \setminus \Gamma^D, 
      \\
      n \cdot \omega_h & = 0
      && \text{ on } \d T \cap  \Gamma^D, 
    \end{aligned}
  \]
  on the boundary of every element $T \in \T$. Since $\curl u_h$ is
  piecewise constant, by the well known degrees of freedom of the
  Raviart-Thomas space $W_h$, these conditions uniquely fix an
  $\omega_h \in W_h$.
  Then,
  $ \| (\curl u_h - \omega_h)_n\|_F$ equals zero for
  $F \in \facets_N$, equals $\frac 1 2 \big\| \jump{\curl u_h}_n\big\|_F $ for
  $F \in \facets_0$, and equals $\| (\curl u_h)_n \|_F $ for
  $F \in \facets_D$, so 
  \[
    \| (\curl u_h - \omega_h)_n \|_{ \d \T}^2
    \lesssim
    \big\| \jump{\curl u_h}_n \big\|_{\facets_{0, D}}^2.
  \]
  Therefore, for this choice of $\omega_h$, we have 
  \begin{align*}
    \nhe{ u_h, \hat u_h, \omega_h }^2
    & \lesssim
      \| \eps(u_h) \|_\mesh^2
      + h \big\| \jump{\curl u_h}_n\big\|_{\facets_{0, D}}^2
      + h^{-1} \| \Pi^0 (u_h - \hat u_h)_t\|^2_{\partial \T}.
  \end{align*}
  By a local scaling argument
  $ h \big\| \jump{\curl u_h}_n\big\|_{\facets}^2 \lesssim \|
  \curl u_h \|_\T^2.$ Using this in the above inequality and
  recalling that
  $\| \nabla u_h \|_\T^2 = \| \veps(u_h) \|_\T^2 + \| \kappa( \curl
  u_h) \|_\T^2$, we complete the proof of~\eqref{eq::korn_hdg_reverse}.
  \qqed
\end{proof}

\begin{lemma}\label{lemma::normequi_tech} 
  For any $u_h\in V_h$, $\omega_h\in W_h$, and  $T \in\mesh$,
    \begin{subequations} \label{eq::normequi}
    \begin{align}
      \| \curl u_h - \omega_h\|_T^2 \sim&~
                                           h \| (\curl u_h - \omega_h)_n\|_{\partial T}^2, \label{eq::normequiA}\\
      \| \curl \kappa (\omega_h) \|_T \sim&~ | \kappa (\omega_h) |_{H^1(T)}^2 \sim \| \div \omega_h \|_T,
                                              \label{eq::normequiB} \\
      \begin{split}\label{eq::normequiC}
        \| \eps (u_h) \|^2_T + \| \curl u_h - \omega_h\|_T^2 \sim&~ \| \dev \nabla u_h - \Pi^0 \kappa(\omega_h) \|_T^2 \\
        + &~ h^2 \| \div \omega_h \|_T^2 + \| \div u_h \|_T^2.  \end{split} 
    \end{align}
  \end{subequations}
\end{lemma}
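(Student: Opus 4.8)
My plan is to prove the three equivalences separately, exploiting that every quantity is local to a single element $T$ and involves only low-order polynomials. Since $u_h|_T\in P^1(T,\rr^3)$, its curl $\curl u_h$ is a constant vector, while $\omega_h|_T = a + \beta x$ for some $a\in\rr^3$, $\beta\in\rr$, so that $\div\omega_h = 3\beta$ is constant and $\curl\omega_h = 0$. Two elementary building blocks will be used repeatedly: for a \emph{constant} vector $v$ a direct count of the entries of $\kappa(v)$ gives $\|\kappa(v)\|_T^2 = \tfrac12|v|^2\,|T|$, and for matrix fields the Frobenius inner product splits orthogonally into symmetric/skew and deviatoric/spherical parts. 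I write $x_T := \tfrac1{|T|}\int_T x\,\dx$ for the barycenter of $T$ and note $\Pi^0\omega_h = a+\beta x_T$.

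For \eqref{eq::normequiA}, the key observation is that $q := \curl u_h - \omega_h = (\curl u_h - a) - \beta x$ is itself a member of the lowest order Raviart--Thomas space on $T$. Its normal trace is constant on each of the four faces of $T$, and since the four face fluxes are unisolvent, the map $q\mapsto q_n|_{\partial T}$ is injective on this four-dimensional space. Hence $\|q\|_T^2$ and $\|q_n\|_{\partial T}^2$ are two norms on a fixed finite-dimensional space; transplanting to the reference element by the contravariant Piola map and tracking the scaling (volume $\sim h^3$, face area $\sim h^2$, flux preservation) produces exactly the factor $h$. No inter-element information is needed, as the statement is elementwise. For \eqref{eq::normequiB}, I would simply compute each quantity in terms of $\beta$: since $\kappa$ is linear with constant coefficients, $\nabla\kappa(\omega_h)=\beta\,\nabla\kappa(x)$ with $\nabla\kappa(x)$ a fixed tensor, giving $|\kappa(\omega_h)|_{H^1(T)}^2 = \tfrac32\beta^2|T|$; using \eqref{eq:kappa-identities} a short calculation yields $\curl\kappa(\omega_h) = \tfrac13(\div\omega_h)\id = \beta\,\id$, so $\|\curl\kappa(\omega_h)\|_T^2 = 3\beta^2|T|$; and $\|\div\omega_h\|_T^2 = 9\beta^2|T|$. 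Each is a fixed positive multiple of $\beta^2|T|$, whence the asserted equivalences (read consistently as squared quantities) follow with constants independent of $h$ and $\nu$.

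The substantive statement is \eqref{eq::normequiC}, which I would prove by matching both sides after two orthogonal decompositions. First, $\Pi^0\kappa(\omega_h)=\kappa(\Pi^0\omega_h)$ since $\kappa$ commutes with averaging, and by \eqref{eq:kappa-identities}, $\dev\nabla u_h - \Pi^0\kappa(\omega_h) = \dev\eps(u_h) + \kappa(\curl u_h - \Pi^0\omega_h)$, a sum of a symmetric and a skew field. Orthogonality of symmetric and skew parts, with $\|\kappa(v)\|_T^2=\tfrac12|v|^2|T|$ applied to the constant $v=\curl u_h-\Pi^0\omega_h$, gives $\|\dev\nabla u_h - \Pi^0\kappa(\omega_h)\|_T^2 = \|\dev\eps(u_h)\|_T^2 + \tfrac12|\curl u_h - \Pi^0\omega_h|^2|T|$. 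On the left-hand side, the deviatoric/spherical split yields $\|\eps(u_h)\|_T^2 = \|\dev\eps(u_h)\|_T^2 + \tfrac13\|\div u_h\|_T^2$, while the $L^2(T)$-orthogonal split of $\curl u_h - \omega_h$ into its mean $\curl u_h - \Pi^0\omega_h$ and the zero-mean remainder $-\beta(x-x_T)$ gives $\|\curl u_h - \omega_h\|_T^2 = |\curl u_h - \Pi^0\omega_h|^2|T| + \beta^2\int_T|x-x_T|^2\dx$.

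Here the crucial bookkeeping step is that $\int_T|x-x_T|^2\dx \sim h^2|T|$ by shape regularity, and $\beta^2|T|\sim\|\div\omega_h\|_T^2$ by \eqref{eq::normequiB}, so the remainder is $\sim h^2\|\div\omega_h\|_T^2$. Collecting terms, both sides of \eqref{eq::normequiC} are equivalent to the same sum $\|\dev\eps(u_h)\|_T^2 + \|\div u_h\|_T^2 + |\curl u_h - \Pi^0\omega_h|^2|T| + h^2\|\div\omega_h\|_T^2$, with ratios depending only on shape regularity. I expect the main obstacle to be precisely this bookkeeping: recognizing that the non-constancy of $\kappa(\omega_h)$ — which is why $\Pi^0$ appears on the right — is exactly accounted for by the second-moment term $\beta^2\int_T|x-x_T|^2\dx$, and that this term scales like $h^2\|\div\omega_h\|_T^2$ rather than contaminating the $O(1)$ terms. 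Once the two orthogonal decompositions isolate the four building blocks cleanly, the equivalence is immediate.
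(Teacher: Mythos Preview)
Your argument is correct and rests on the same orthogonal splittings (symmetric/skew and deviatoric/spherical) that the paper uses. The organization for \eqref{eq::normequiC} differs slightly: the paper first establishes the exact identity
\[
  \|\eps(u_h)\|_T^2 + \tfrac12\|\curl u_h - \omega_h\|_T^2
  = \|\dev\nabla u_h - \kappa(\omega_h)\|_T^2 + \tfrac13\|\div u_h\|_T^2
\]
with the \emph{unprojected} $\kappa(\omega_h)$, and then passes to $\Pi^0\kappa(\omega_h)$ via the $L^2$-projection error bound $\|(I-\Pi^0)\kappa(\omega_h)\|_T\lesssim h\,|\kappa(\omega_h)|_{H^1(T)}$ together with an inverse inequality for the term $h^2\|\div\omega_h\|_T^2$. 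You instead work directly with $\Pi^0\kappa(\omega_h)$ from the outset and replace those two abstract estimates by the explicit mean/zero-mean split of $\curl u_h-\omega_h$ and the second-moment scaling $\int_T|x-x_T|^2\dx\sim h^2|T|$. Your route is a touch more elementary and self-contained; the paper's route is shorter to write once one is willing to invoke standard approximation and inverse inequalities. For \eqref{eq::normequiB} the paper simply cites an identity from a companion article, whereas your direct computation of $\curl\kappa(\omega_h)=\beta\,\id$ and $|\kappa(\omega_h)|_{H^1(T)}^2=\tfrac32\beta^2|T|$ makes the equivalence entirely transparent.
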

\begin{proof}
  The first equivalence follows by standard scaling arguments (by
  equivalence of norms in the lowest order Raviart-Thomas space).
  Equivalence \eqref{eq::normequiB}
  also follows by local scaling  arguments and  \cite[eq.~(4.14)]{mcsII}.
  We continue on to prove
  \eqref{eq::normequiC}. Applying  the Pythagoras theorem twice, 
  \begin{align}
  \nonumber 
    \| \eps(u_h) \|_T^2 +
    \frac 1 2 \| \curl u_h - \omega_h\|_T^2
    &= \| \nabla u_h - \kappa(\omega_h)\|_T^2 \\ \label{eq:eps-dev-etc}
    &= \| \dev \nabla u_h - \kappa(\omega_h)\|_T^2 + \frac 1 3 \| \div u_h\|_T^2.
\end{align}
We also have, due to~\eqref{eq::normequiB},  
\begin{equation}
  \label{eq:dev-omega--curl}
\begin{aligned}
  h^2 \| \div \omega_h \|^2_T \sim&~  h^2 \| \curl (\kappa (\omega_h)) \|^2_T = h^2\big\| \curl \big(\kappa(\omega_h-\curl u_h) \big)\big\|^2_T \\
  \lesssim &~ \| \omega_h - \curl u_h \|_T^2.
\end{aligned}  
\end{equation}
Here we have used an inverse inequality and the observation that
derivatives of $\curl u_h \in P^0(T)$ vanish. Combining~\eqref{eq:eps-dev-etc}, \eqref{eq:dev-omega--curl} and the continuity of the $L^2$ projection, we conclude that the right side of \eqref{eq::normequiC} can be bounded by
the left side.

For the reverse inequality,
\begin{align}
  \nonumber 
  \| \dev \nabla u_h - \kappa(\omega_h)\|^2_T
  &= \|\Pi^0( \dev \nabla u_h -  \kappa(\omega_h))\|_T^2
    + \| (\id - \Pi^0) \kappa(\omega_h)\|_T^2 \\ \nonumber 
    &\lesssim \| \dev \nabla u_h - \Pi^0 \kappa(\omega_h)\|_T^2 + h^2 | \kappa(\omega_h)|_{H^1(T)}^2 \\ \label{eq:dev-full-Pi}
    &\sim \| \dev \nabla u_h - \Pi^0 \kappa(\omega_h)\|_T^2 + h^2 \| \div \omega_h\|_T^2.
\end{align}
Here, we used that $\dev \nabla u_h \in P^0(T)$, a standard
approximation estimate for the $L^2$ projection, followed by
\eqref{eq::normequiB}. The proof is then concluded using~\eqref{eq:eps-dev-etc}.
\qqed
\end{proof}

\begin{lemma} \label{lemma::normequi}
  For all $(u_h, \hat{u}_h, \omega_h)\in U_h$,
  \[
    \nhe{ u_h, \hat u_h, \omega_h}^2
    \,\sim\,
    \| u_h, \hat u_h, \omega_h \|^2_{\eps}
    \,\sim\,
    \| u_h, \hat u_h, \omega_h \|^2_{U_h} + \| \div u_h \|_0^2 + h^2 \|\div \omega_h \|^2_0. 
  \]
\end{lemma}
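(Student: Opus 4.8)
The plan is to reduce the asserted three-way equivalence to the element-wise equivalences already established in Lemma~\ref{lemma::normequi_tech}, so that essentially no new analytical work is needed. The crucial preliminary observation is that the interface term $h^{-1}\|\Pi^0(u_h-\hat u_h)_t\|^2_{\partial\T}$ occurs verbatim in all three quantities $\nhe{u_h,\hat u_h,\omega_h}^2$, $\|u_h,\hat u_h,\omega_h\|_\eps^2$, and $\|u_h,\hat u_h,\omega_h\|_{U_h}^2$. It therefore suffices to prove the equivalences between the remaining terms; the common term may be added back to both sides afterwards without affecting the relations.

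For the first equivalence $\nhe{u_h,\hat u_h,\omega_h}^2 \sim \|u_h,\hat u_h,\omega_h\|_\eps^2$, after discarding the shared interface term I would compare $\|\eps(u_h)\|_\mesh^2 + h\|(\curl u_h-\omega_h)_n\|^2_{\partial\T}$ with $\|\eps(u_h)\|_\T^2 + \|\curl u_h-\omega_h\|_\T^2$. Since $\|\cdot\|_\mesh$ and $\|\cdot\|_\T$ denote the same element-wise sum, the symmetric-gradient contributions coincide exactly, and summing the local equivalence~\eqref{eq::normequiA} over all $T\in\mesh$ identifies $h\|(\curl u_h-\omega_h)_n\|^2_{\partial\T}$ with $\|\curl u_h-\omega_h\|_\T^2$ up to constants. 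This completes the first equivalence at once.

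For the second equivalence $\|u_h,\hat u_h,\omega_h\|_\eps^2 \sim \|u_h,\hat u_h,\omega_h\|_{U_h}^2 + \|\div u_h\|_0^2 + h^2\|\div\omega_h\|_0^2$, I would again drop the common term and compare $\|\eps(u_h)\|_\T^2 + \|\curl u_h-\omega_h\|_\T^2$ with $\|\dev\nabla u_h-\Pi^0\kappa(\omega_h)\|_\T^2 + \|\div u_h\|_0^2 + h^2\|\div\omega_h\|_0^2$. Because $\div u_h$ and $\div\omega_h$ are evaluated element by element and both $V_h$ and $W_h$ are $H(\div)$-conforming, the global norms split additively, $\|\div u_h\|_0^2=\sum_T\|\div u_h\|_T^2$ and $h^2\|\div\omega_h\|_0^2=h^2\sum_T\|\div\omega_h\|_T^2$, while the $\dev\nabla$-term is already a mesh-wise sum. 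Hence the desired relation is precisely the sum of the local equivalence~\eqref{eq::normequiC} over all $T\in\mesh$.

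Since both equivalences are obtained merely by summing the element-local statements of Lemma~\ref{lemma::normequi_tech}, I do not expect a genuine obstacle: the analytical substance (notably the inverse-estimate step feeding~\eqref{eq::normequiC} and the Raviart--Thomas scaling behind~\eqref{eq::normequiA}) has already been absorbed into that lemma. The only points demanding care are bookkeeping ones: checking that the interface contribution is literally identical in the three norms so that it cancels cleanly, and confirming the additive splitting of the global divergence norms over elements.
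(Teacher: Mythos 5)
Your proposal is correct and is exactly the argument the paper intends: the paper's proof consists of the single sentence that the lemma is a direct consequence of Lemma~\ref{lemma::normequi_tech}, and your write-up simply makes explicit the bookkeeping (the shared interface term and the element-wise summation of \eqref{eq::normequiA} and \eqref{eq::normequiC}) behind that claim.
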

\begin{proof}
    This is a direct consequence of Lemma \ref{lemma::normequi_tech}.
    \qqed
\end{proof}

\subsection{Interpolation operators}\label{sec::interp}

In subsequent sections we will require the interpolation operators
into the spaces in \eqref{def::fespaces_V}-\eqref{def::fespaces_Q}, denoted by 
\[
  \IV: H^1_{n, D}(\mesh)\rightarrow V_h,\quad
  \IW: H^1_{n, D}(\mesh)\rightarrow W_h,\quad
  \IVh: L^2(\facets, \rr^3)\rightarrow \hat{V}_h, \quad
  \IS : \Sigma \rightarrow \Sigma_h,
\]
where $\Sigma = \{\tau \in H^{1}(\mesh, \dd):\jump{\tau}_{nt} = 0\}$.
Of course, the natural interpolation for $Q_h$, denoted by  $\IQ : L^2(\Omega) \rightarrow Q_h$, is simply the $L^2$-orthogonal projection.
The definitions and properties of the remaining interpolants
are summarized in this subsection.

An $H(\div)$-interpolation into $V_h$, denoted by $\IV: H^1_{n,
D}(\mesh)\rightarrow V_h,$ is defined using the standard degrees of
freedom (see e.g., \cite[Proposition 2.3.2]{brezzi2012mixed}): 
\begin{align}
   \quad \int_{F} (u - \IV u)_n \,q\, \ds = 0 \quad \textrm{for all }
   q\in P^1(F) \textrm{ and } F\in\facets.   \label{def::interp_V}
\end{align}
A well-known consequence of~\eqref{def::interp_V} is that
\begin{equation}
  \label{eq:commut}
  \div (\IV u) = \IQ \div u,   
\end{equation}
for all $u$ in the domain of $\IV$.
The interpolant $\IW: H^1_{n, D}(\mesh)\rightarrow W_h$, defined by
$
  ((\omega - \IW \omega)_n, q)_F = 0  
  $
for  all $ q\in P^0(F)$ and all $F\in\facets,$ is also standard.
The 
interpolation operator for the stress space
$\IS : \Sigma \rightarrow \Sigma_h$,
borrowed from~\cite{mcsI}, is defined by 
\begin{align}
  & \int_F(\IS\sigma - \sigma)_{nt} \cdot q \ds= 0, && \Forall q\in P^0(F, \rr^3) \text{ with }q_n=0,~\Forall F\in\facets,  \label{def::interp_Sigma_vol} \\
  & \int_T(\IS\sigma - \sigma) : q \dx= 0, && \Forall q\in P^0(T, \dd),~\Forall T\in\mesh. \label{def::interp_Sigma_edge}
\end{align}
Finally, the  tangential $L^2$-projection on facets, 
$\IVh: L^2(\facets, \rr^3)\rightarrow \hat{V}_h$ is defined as usual by $((\hat{u} - \IVh \hat{u})_t,  q)_F = 0$
for all $q\in P^0(F, \rr^3)$ with $ q_n=0$ on all  $F\in\facets.$

To note the salient approximation properties of these interpolants, first
observe that for a $u \in
H^1(\Omega, \rr^3) \cap H^2(\mesh)$, we have $\curl(u) \in
H^1_{n, D}(\mesh)$. Hence $(\IV u, \IVh u_t, \IW \curl (u))$ is in $U_h$ and 
using standard scaling arguments and the Bramble-Hilbert lemma, we get
\begin{align} \label{eq::approx_U}
  \nhe{u - \IV u, \; u_t -  \IVh  u_t,  \;
  \curl u - \IW \curl u }^2 & + \|u - \IV u, \; u_t -  \IVh  u_t\|_\nabla^2 
                               \nonumber
  \\
  &
    +
    h \|  \veps( \IV u - u)_{nt} \|_{\partial \mesh}^2
  \lesssim\, h^2 \| u \|^2_{H^2(\mesh)}.
\end{align}
Also recall that  \cite[Theorem~5.8]{mcsI} implies that for all $\sigma \in \Sigma,$ \begin{align}\label{eq::approx_Sigma}
   \| \sigma - \IS \sigma \|_0^2 +   h \left\| (\sigma - \IS \sigma)_{nt} \right\|_{\partial \mesh}^2 & \lesssim\; h^2 \| \sigma \|^2_{H^1(\mesh)}.
\end{align}

\section{An $H(\div)$-conforming velocity--vorticity HDG
scheme}\label{sec::hdg}

\subsection{Derivation of the HDG method}

To derive our new HDG scheme for \eqref{eq::stokesweak}, let $u,p$ be
a sufficiently smooth exact solution of \eqref{eq::stokes}.  (A
sufficient smoothness condition  is quantified in
Lemma~\ref{lem::hdg_consistency} below.)  Let $v_h \in V_h$.
Then, multiplying~\eqref{eq::stokes-a} by $v_h$ and integrating by
parts on each element,
\begin{subequations}
  \begin{equation}
    \label{eq:hdg-derv-1}
    \begin{aligned}
      (f, v_h) = (-\div(\nu\eps(u)) + \nabla p, v_h)
      & = -(p, \div v_h) + \sum\limits_{T \in \mesh}  \int_T \nu\eps(u): \eps(v_h) \dx
      \\      
      & + \sum\limits_{T \in \mesh}
      \int_{\partial T \setminus \Gamma_N} (p - \nu \eps(u))n   \cdot v_h \ds,
    \end{aligned}
  \end{equation}
  where we used the symmetry of $\eps(u)$ and
  the boundary condition~\eqref{eq::stokes-d}.
  Since $p$ is smooth,  $\jump{v_h}_n=0$ on $\facets_{0, D}$,
  \begin{equation}
    \label{eq:hdg-derv-1.1}  
    0 = 
    -\sum\limits_{T \in \mesh}
    \int_{\partial T \setminus \Gamma_N} p n \cdot v_h \ds.
  \end{equation}
  Let $\hat v_h \in \hat V_h.$
  Since $\hat v_h$ is single-valued on all facets,
  $\hat{v}_h=0$ on $\Gamma_D$ (see \eqref{def::fespaces_Vhat}),
  and $\eps(u)$ is continuous across interior facets,
  \begin{equation}
    \label{eq:hdg-derv-2}  
    0 =
    \sum\limits_{T \in \mesh}
    \int_{\partial T \setminus \Gamma_N } \nu \veps(u) n \cdot \hat v_h \ds.
  \end{equation}
\end{subequations}
  Adding~\eqref{eq:hdg-derv-1}--\eqref{eq:hdg-derv-2},
\[    \begin{aligned}
    (f, v_h) 
    & =  -(p, \div v_h) + \sum\limits_{T \in \mesh}  \int_T \nu\eps(u): \eps(v_h) \dx
    + \int_{\partial T \setminus \Gamma_N} \nu \eps(u)n   \cdot ( \hat v_h - v_h) \ds,
  \end{aligned}
\]
Since $(\hat v_h)_t = \hat v_h$, $\jump{v_h}_n=0$ on $\facets_{0, D}$
and $\veps(u)$ is smooth, we may replace $( \hat v_h - v_h)$ by its
tangential component $( \hat v_h - v_h)_t$ in the last term above.
Furthermore, on $\Gamma_N,$ we have
$\veps(u) n \cdot (\hat v_h - v_h)_t = \veps(u)_{nt} \cdot (\hat v_h -
v_h)_t = 0$ since the tangential part of~\eqref{eq::stokes-d} shows
that $\eps(u)_{nt}=0$ on $\Gamma_N.$ Hence we may also replace
$\partial T \setminus \Gamma_N$ by $\partial T$ in the last
term. Thus,
  \begin{subequations}
    \begin{equation}
      \label{eq:hdg-derv-3}
      \begin{aligned}
        (f, v_h) 
        & =  -(p, \div v_h)
        + \sum\limits_{T \in \mesh}  \int_T \nu\eps(u): \eps(v_h) \dx        + \int_{\partial T} \nu \eps(u)n   \cdot ( \hat v_h - v_h)_t \ds.
      \end{aligned}
    \end{equation}
Next, let $\omega = \curl(u)$
and $\hat u = u_t$ on each element boundary $\partial T$.
Then, obviously, 
\begin{align} \label{eq::derivhdg}
  0 &= \sum\limits_{T \in \mesh}\int_{\partial T} \nu\eps(v_h)n  \cdot (\hat u - u)_t \ds +  \sum\limits_{T \in \mesh}\frac{\nu\alpha}{h} \int_{\partial T} \Pi^0(\hat u - u)_t \cdot \Pi^0(\hat v_h - v_h)_t \ds,
  \\\label{eq::const_term_curl}
  0 & =  \sum\limits_{T \in \mesh}h \int_{\partial T} \nu (\curl u  - \omega)_n (\curl v_h - \eta_h)_n \dx,
\end{align}
\end{subequations}
for any test function $\eta_h \in W_h$ and constant $\alpha > 0$,
i.e., if $u, \hat u$ and $\omega$ are replaced by $u_h, \hat u_h$ and $\omega_h$, respectively, then 
the terms on the right are consistent terms. 

Adding the
equations~\eqref{eq:hdg-derv-3}--\eqref{eq::const_term_curl}, we
obtain
\begin{subequations}
  \label{eq:exhdg}
  \begin{equation}
    \label{eq:exhdg-1}
    \nu a^{\hdg}(u, \hat u, \omega; v_h, \hat v_h, \eta_h)
    - (\div v_h, p) = (f, v_h),
  \end{equation}
  where 
  \begin{align*}
    a^{\hdg}& (z, \hat z, \theta;\, v_h, \hat v_h, \eta_h)\,
              := \left(\eps(z),  \eps(v_h)\right)_{\mesh}
    \\
            &      
      +
              \left(\eps(z) n,  (\hat v_h - v_h)_t\right)_{\partial \mesh}
      +
              \left( (\hat z - z)_t, \eps(v_h) n\right)_{\partial \mesh}
    \\
            &
              + \frac{\alpha}{h}
              \left(\Pi^0(\hat z - z)_t,
              \Pi^0(\hat v_h - v_h)_t\right)_{\partial \mesh}
              + h   \left((\curl z - \theta)_n,
              (\curl v_h - \eta_h)_n\right)_{\partial \mesh}.
  \end{align*}
  Here and throughout,
  $(\cdot, \cdot)_\mesh = \sum_{T \in \mesh} (\cdot, \cdot)_T$ and
  $(\cdot, \cdot)_{\partial\mesh} = \sum_{T \in \mesh} (\cdot,
  \cdot)_{\d T}$, extending our prior analogous norm notation to inner products.
  Of course, from \eqref{eq::stokes-b}, we also have
\begin{equation}
   \label{eq:exhdg-2}
  (\div u, q_h) = 0,  
\end{equation}
\end{subequations}
for all $q_h \in Q_h.$ Equations
\eqref{eq:exhdg-1}--\eqref{eq:exhdg-2}, after replacing
$(u, \hat u, \omega)$ by $(u_h, \hat u_h, \omega_h)$, yield the
following discrete formulation: find
$(u_h, \hat u_h, \omega_h) \in U_h $ and $p_h \in Q_h$ such that
\begin{subequations} \label{eq::hdgstokes}
  \begin{alignat}{2} 
    \nu \,a^{\hdg}(u_h,\hat u_h, \omega_h;v_h, \hat v_h, \eta_h) - (p_h, \div v_h) &= (f,v_h), \label{eq::hdgstokesA}\\
    -(\div u_h, q_h) &= 0,  \label{eq::hdgstokesB}
  \end{alignat}
\end{subequations}
for all $ (v_h, \hat v_h, \eta_h) \in U_h$ and $ q_h \in Q_h$. Note
that this method enforces $\Pi^0(u_h)_t=0$ on $\Gamma_D$ as a
consequence of how the last term of~\eqref{eq::derivhdg} manifest in
the method.  Due to the Dirichlet conditions built into $W_h$ (see
\eqref{def::fespaces_W}) the method also penalizes
$\|\Pi^0\curl(u_h)_n\|_{\Gamma_D}$ through the manifestation of the
consistent term \eqref{eq::const_term_curl} in the method.
System~\eqref{eq::hdgstokes} may be thought of as a nonconforming HDG
discretization of the standard weak form~\eqref{eq::stokesweak}.

Note that $a^{\hdg}(u, \hat u, \omega; v_h, \hat v_h, \eta_h)$ is well
defined for any $(v_h, \hat v_h, \eta_h) \in U_h$ and any
$(u, \hat u, \omega) \in \Ureg,$ where
\begin{subequations}\label{def::ureg}
  \begin{align}
    \Ureg &:= (H_{0,D}^1(\Omega) \cap H^2(\mesh)) \times L^2(\facets) \times H^1_{n, D}(\mesh), \\
    \Qreg &:= Q \cap H^1(\mesh). 
  \end{align}
\end{subequations}

\begin{lemma}[Consistency of the HDG method] \label{lem::hdg_consistency}
  Suppose the exact solution
  $(u,p)$ of \eqref{eq::stokesweak} is regular enough so that 
  $u$, together with $\hat u = u_t$ on facets and $\omega = \curl u$, satisfies 
  $ (u, \hat u, \omega)  \in \Ureg$ and suppose $p \in \Qreg$. Then any 
  $((u_h, \hat u_h, \omega_h),p_h) \in U_h \times Q_h$ solving
  \eqref{eq::hdgstokes} satisfies
  \begin{align*}
    \nu a^{\hdg}(u-u_h, u_t - \hat u_h, \omega - \omega_h; v_h, \hat{v}_h, \eta_h) - (\div v_h,p - p_h) = 0,
  \end{align*}
  for all
  $(v_h, \hat v_h, \eta_h) \in U_h$.
\end{lemma}
\begin{proof}
  This follows by subtracting~\eqref{eq::hdgstokes} from~\eqref{eq:exhdg}.
  \qqed
\end{proof}

\subsection{Pressure robust error analysis of the HDG scheme}

We follow the usual mixed method approach and proceed to combine
continuity and coercivity of $a^{\hdg}$ with a discrete Stokes inf-sup
condition, or the LBB~\cite{brezzi2012mixed} estimate.  The latter
implies the stability of~\eqref{eq::hdgstokes}, which also implies its
unique solvability. We begin by noting that by local
scaling arguments, there is a mesh-independent $c_1$ such that 
\begin{equation}
  \label{eq:epsbdr-int-trivial}
  h \| \eps(v_h) \|^2_{\partial T} \le c_1 \| \eps(v_h)\|^2_T,
  \qquad v_h \in V_h,  \quad T \in \mesh,
\end{equation}
since $\veps(v_h)$ is constant on $T$. For the same reason, $\Pi^0$
may be introduced into the second and third terms in the definition of
$a^{\hdg}(u_h,\hat u_h, \omega_h;v_h, \hat v_h, \omega_h)$, e.g.,
\begin{equation}
  \label{eq:hdg-trivial-Piintro}
  \left( \veps(u_h)n, (\hat v_h - v)_t \right)_{\partial \mesh} =
  \left( \veps(u_h)n, \Pi^0(\hat v_h - v)_t \right)_{\partial \mesh}.
\end{equation}
Let $\nhep{ u, \hat u, \omega}^2 = \nhe{ u, \hat u, \omega}^2
+ h \|\eps(u)_{nt}\|_{\partial\mesh}^2 +
h^{-1}\| (\id - \Pi^0) (u - \hat u) \|_{\partial \mesh}^2.$ 
\begin{lemma}[Continuity of of $a^{\hdg}$]
  \label{lemma::hdg_continuity}
  For any $(u, \hat u, \omega) \in \Ureg$,
  $(u_h,\hat u_h, \omega_h) \in U_h, $
  $(v_h, \hat v_h, \omega_h) \in U_h$ and $q_h \in Q_h$,
  \begin{align}
    \label{eq::hgd_continuity-0}
    \nu a^{\hdg}(u,\hat u, \omega;v_h, \hat v_h, \omega_h)
    &\lesssim \nu \nhep{u, \hat u, \omega} \nhep{v_h, \hat v_h, \eta_h}, \\
    \label{eq::hgd_continuity}
    \nu a^{\hdg}(u_h,\hat u_h, \omega_h;v_h, \hat v_h, \omega_h)  &\lesssim \nu \nhe{u_h, \hat u_h, \omega_h} \nhe{v_h, \hat v_h, \eta_h}, \\
    \label{eq::hgd_continuity-div}
    (\div u_h, q_h) &\lesssim \nhe{u_h, \hat u_h, \omega_h}\|q_h\|_0.
  \end{align}
\end{lemma}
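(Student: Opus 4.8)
The plan is to prove the three continuity bounds by bounding each term in the bilinear form $a^{\hdg}$ (and the divergence pairing) using Cauchy--Schwarz, the trace-type estimate~\eqref{eq:epsbdr-int-trivial}, and the definitions of the norms $\nhe{\cdot}$ and $\nhep{\cdot}$. For~\eqref{eq::hgd_continuity-div}, I would first observe that the $L^2$-projection bound gives $(\div u_h, q_h) = (\div u_h, q_h)_\mesh \le \| \div u_h \|_\mesh \| q_h \|_0$, and since $\div u_h = \trace{\nabla u_h}$ can be controlled by $\| \nabla u_h \|_\mesh$, which in turn is dominated by $\nhe{u_h, \hat u_h, \omega_h}$ via the Korn-type inequality of Lemma~\ref{lem::korn_hdg}, the estimate follows directly.

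For the coercivity-side continuity estimate~\eqref{eq::hgd_continuity}, where all arguments are in $U_h$, I would handle the five terms of $a^{\hdg}$ one at a time. The first term $(\eps(u_h), \eps(v_h))_\mesh$ is bounded immediately by $\| \eps(u_h) \|_\mesh \| \eps(v_h) \|_\mesh$. The last term $h((\curl u_h - \theta)_n, (\curl v_h - \eta_h)_n)_{\partial\mesh}$ is controlled by the third summand $h \| (\curl u_h - \omega_h)_n \|_{\partial\mesh}^2$ appearing explicitly in $\nhe{\cdot}$. The jump-penalty term with factor $\alpha/h$ is bounded by the second summand $h^{-1} \| \Pi^0(u_h - \hat u_h)_t \|_{\partial\mesh}^2$ of the norm. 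The two genuinely coupled terms, $(\eps(u_h)n, (\hat v_h - v_h)_t)_{\partial\mesh}$ and its transpose, are where I would use~\eqref{eq:hdg-trivial-Piintro} to insert $\Pi^0$ into the tangential jump factor (valid because $\eps(u_h)n$ is constant on each facet), then apply Cauchy--Schwarz followed by~\eqref{eq:epsbdr-int-trivial} to trade $h^{1/2}\| \eps(u_h) \|_{\partial T}$ for $\| \eps(u_h) \|_T$, absorbing the $h^{-1/2}$ onto the $\Pi^0$-jump factor so that both factors land in the norm. Summing over all terms and grouping yields~\eqref{eq::hgd_continuity}.

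The estimate~\eqref{eq::hgd_continuity-0} differs only in that the first argument $(u, \hat u, \omega)$ is merely in $\Ureg$ rather than $U_h$, so $\eps(u)$ is no longer piecewise constant and the inverse trace bound~\eqref{eq:epsbdr-int-trivial} and the $\Pi^0$-insertion trick are unavailable for the $u$-factors. This forces the use of the \emph{augmented} norm $\nhep{\cdot}$ on the first argument: the extra term $h \| \eps(u)_{nt} \|_{\partial\mesh}^2$ directly supplies the boundary norm of $\eps(u)n$ needed in the coupled term $(\eps(u)n, (\hat v_h - v_h)_t)_{\partial\mesh}$ without invoking~\eqref{eq:epsbdr-int-trivial}, while the extra term $h^{-1} \| (\id - \Pi^0)(u - \hat u) \|_{\partial\mesh}^2$ handles the non-$\Pi^0$-projected portion of the tangential jump arising in the transpose term $((\hat u - u)_t, \eps(v_h)n)_{\partial\mesh}$, whose $v_h$-factor can still be treated by~\eqref{eq:epsbdr-int-trivial}.

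The main obstacle, and the step requiring the most care, is the treatment of the two coupled boundary terms in the asymmetric estimate~\eqref{eq::hgd_continuity-0}: one must correctly decide for each factor whether to use the piecewise-constant structure (available only for the discrete argument $v_h$) or to absorb it into an augmented-norm contribution (necessary for the non-discrete argument $u$). The bookkeeping of which $h$-powers attach to which factor, and verifying that every resulting boundary quantity is genuinely controlled by one of the summands defining $\nhep{\cdot}$ or $\nhe{\cdot}$, is the delicate part; the remaining terms are routine applications of Cauchy--Schwarz.
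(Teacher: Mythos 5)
Your proof of the two continuity bounds \eqref{eq::hgd_continuity-0} and \eqref{eq::hgd_continuity} follows essentially the same route as the paper's (which is stated very tersely): Cauchy--Schwarz throughout, with \eqref{eq:epsbdr-int-trivial} and the $\Pi^0$-insertion \eqref{eq:hdg-trivial-Piintro} available only for the discrete factors, and the two extra summands of $\nhep{\cdot}$ compensating exactly where those tools fail for the merely-regular argument $(u,\hat u,\omega)\in\Ureg$; your accounting of which factor absorbs which power of $h$ is correct. The one place you genuinely diverge is \eqref{eq::hgd_continuity-div}: you bound $\|\div u_h\|_\mesh$ by $\|\nabla u_h\|_\mesh$ and then invoke the discrete Korn inequality of Lemma~\ref{lem::korn_hdg} to reach $\nhe{u_h,\hat u_h,\omega_h}$, whereas the paper uses the pointwise orthogonal decomposition $\|\eps(u_h)\|_T^2=\|\dev\eps(u_h)\|_T^2+\tfrac13\|\div u_h\|_T^2$, which gives $\|\div u_h\|_T\le\sqrt{3}\,\|\eps(u_h)\|_T$ directly. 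Both arguments are valid (Lemma~\ref{lem::korn_hdg} is established before this point, so there is no circularity), but the paper's is more elementary and avoids any constant coming from the Korn inequality; it exploits that $\div u_h=\trace{\eps(u_h)}$, so the symmetric gradient alone already controls the divergence without passing through the full gradient.
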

\begin{proof}
  Inequality~\eqref{eq::hgd_continuity-0} follows from Cauchy-Schwarz
  inequality, while~\eqref{eq::hgd_continuity} follows by additionally employing
  \eqref{eq:epsbdr-int-trivial}
  and~\eqref{eq:hdg-trivial-Piintro}. The
  estimate~\eqref{eq::hgd_continuity-div} is a consequence of
  $ \frac 1 3 \|\div u_h \|^2_T = \| \veps(u_h) \|_T^2 - \| \dev
  \veps(u_h) \|^2_T \le  \| \veps(u_h)\|_T^2.$
  \qqed
\end{proof}

\begin{lemma}[Coercivity of $a^{\hdg}$]\label{lemma::hdg_coercivity}
  There is a mesh-independent $\alpha_0>0$ such that for all 
  $\alpha >\alpha_0$ and all
  $(u_h, \hat{u}_h,
  \omega_h)\in U_h$, 
  \begin{align}\label{eq::hgd_coercivity}
    \nu a^{\hdg}(u_h,\hat u_h, \omega_h;u_h, \hat u_h, \omega_h)  \gtrsim
    \nu \nhe{u_h, \hat u_h, \omega_h}^2.
  \end{align}
\end{lemma}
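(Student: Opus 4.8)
The plan is to evaluate the bilinear form on the diagonal, $a^{\hdg}(u_h,\hat u_h,\omega_h;u_h,\hat u_h,\omega_h)$, and show that the problematic "cross" consistency terms can be absorbed into the good terms using Young's inequality once $\alpha$ is taken large enough. First I would write out the diagonal value explicitly:
\begin{align*}
  a^{\hdg}(u_h,\hat u_h,\omega_h;u_h,\hat u_h,\omega_h)
  &= \|\eps(u_h)\|_\mesh^2
     + 2\left(\eps(u_h)n,(\hat u_h - u_h)_t\right)_{\partial\mesh}
     \\
  &\quad
     + \frac{\alpha}{h}\|\Pi^0(\hat u_h - u_h)_t\|_{\partial\mesh}^2
     + h\|(\curl u_h - \omega_h)_n\|_{\partial\mesh}^2,
\end{align*}
where I have used that the second and third terms of $a^{\hdg}$ coincide on the diagonal and collapse into the single factor of $2$. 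The last two diagonal terms are already exactly two of the three terms defining $\nhe{\cdot}^2$, so the entire task reduces to controlling the remaining indefinite cross term $2(\eps(u_h)n,(\hat u_h-u_h)_t)_{\partial\mesh}$.

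The key step is to bound this cross term. Since $\eps(u_h)$ is constant on each element, I may insert $\Pi^0$ as in~\eqref{eq:hdg-trivial-Piintro} to get $(\eps(u_h)n,\Pi^0(\hat u_h - u_h)_t)_{\partial\mesh}$, then apply Cauchy--Schwarz on $\partial\mesh$ followed by the trace-scaling estimate~\eqref{eq:epsbdr-int-trivial}. This yields, for any $\delta>0$,
\[
  2\left|\left(\eps(u_h)n,\Pi^0(\hat u_h - u_h)_t\right)_{\partial\mesh}\right|
  \;\le\;
  \delta\, h\,\|\eps(u_h)\|_{\partial\mesh}^2
  + \delta^{-1} h^{-1}\|\Pi^0(\hat u_h - u_h)_t\|_{\partial\mesh}^2
  \;\le\;
  \delta c_1 \|\eps(u_h)\|_\mesh^2
  + \delta^{-1} h^{-1}\|\Pi^0(\hat u_h - u_h)_t\|_{\partial\mesh}^2,
\]
using Young's inequality with the trace constant $c_1$. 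Choosing $\delta$ small (say $\delta c_1 = 1/2$) absorbs a fixed fraction of $\|\eps(u_h)\|_\mesh^2$, and the penalty cross-contribution $\delta^{-1}h^{-1}\|\Pi^0(\hat u_h - u_h)_t\|_{\partial\mesh}^2$ is then dominated by the gradient-jump term $\frac{\alpha}{h}\|\Pi^0(\hat u_h - u_h)_t\|_{\partial\mesh}^2$ provided $\alpha > \alpha_0 := \delta^{-1} = 2c_1$. Collecting terms gives
\[
  a^{\hdg}(u_h,\hat u_h,\omega_h;u_h,\hat u_h,\omega_h)
  \;\gtrsim\;
  \|\eps(u_h)\|_\mesh^2
  + h^{-1}\|\Pi^0(\hat u_h - u_h)_t\|_{\partial\mesh}^2
  + h\|(\curl u_h - \omega_h)_n\|_{\partial\mesh}^2
  = \nhe{u_h,\hat u_h,\omega_h}^2,
\]
and multiplying by $\nu$ finishes the estimate.

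The main obstacle, and the only place requiring genuine care, is the sign-indefinite cross term together with the need for the threshold $\alpha_0$ to be \emph{mesh-independent}. This hinges entirely on the inverse trace inequality~\eqref{eq:epsbdr-int-trivial}, whose constant $c_1$ is mesh-independent precisely because $\eps(v_h)$ is element-wise constant for $v_h\in V_h$ (the lowest-order $\BDM$ space); this is what lets me transfer the boundary $\eps$-norm back to the volume $\eps$-norm with a uniform constant, and it is also why inserting $\Pi^0$ costs nothing. I would emphasize that it is essential to use $\Pi^0(\hat u_h - u_h)_t$ rather than the full tangential jump in the cross term so that the absorbed penalty contribution matches exactly the $\Pi^0$-projected penalty term present in $a^{\hdg}$; no control over the non-$\Pi^0$ part of the jump is needed for coercivity (that part is handled separately via the norm $\nhep{\cdot}$ in the continuity and interpolation estimates). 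All remaining manipulations are the routine Cauchy--Schwarz/Young arguments sketched above.
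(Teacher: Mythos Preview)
Your proposal is correct and follows essentially the same route as the paper's proof: write out the diagonal value, insert $\Pi^0$ via~\eqref{eq:hdg-trivial-Piintro}, apply Young's inequality together with the trace estimate~\eqref{eq:epsbdr-int-trivial}, and choose the Young parameter so that the cross term is absorbed with a mesh-independent threshold $\alpha_0$ proportional to $c_1$. The paper's argument is identical up to notation (it uses $\beta$ where you use $\delta$) and is slightly more terse, but the content is the same.
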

\begin{proof}
  By~\eqref{eq:hdg-trivial-Piintro} and Young's inequality with any $\beta>0$, 
  \begin{align*}
    a^{\hdg}(u_h,\hat u_h, \omega_h;u_h, \hat u_h, \omega_h)
    & \ge
      \| \eps(u_h) \|_\mesh^2
      -
      \Big(
      \beta h\| \veps (u_h) n \|_{\partial \mesh}^2 + 
      \frac{1}{\beta h} \| \Pi^0 (u_h - \hat u_h)_t\|^2_{\partial \T}
      \Big) \\
    & +
      \alpha h^{-1}
      \| \Pi^0 (u_h - \hat u_h)_t\|^2_{\partial \T}
      + h \| (\curl u_h - \omega_h)_n\|_{\partial \T}^2.
  \end{align*}
  Hence using~\eqref{eq:epsbdr-int-trivial}, and choosing, say $\beta =
  1/(2c_1)$ and $\alpha = 2/\beta$, \eqref{eq::hgd_coercivity} follows.
  \qqed
\end{proof}

\begin{lemma}[LBB condition for the HDG method] \label{lem::hdgstokeslbb} For any
  $p_h\in Q_h$ there exists a $(v_h, \hat v_h, \eta_h)\in U_h$ with
  $\nhe{v_h, \hat v_h, \eta_h}\lesssim\|p_h\|_0$ and
  $\div v_h = p_h$. Consequently,
  \begin{align}\label{eq::hdg_stokes_lbb}
    \sup_{(v_h, \hat v_h, \eta_h) \in U_h} \frac{(\div v_h, p_h)}
    { \nhe{v_h, \hat v_h, \eta_h}} \gtrsim \|p_h\|_0.
  \end{align}
\end{lemma}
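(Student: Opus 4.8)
The plan is to exhibit, for each $p_h\in Q_h$, a single triple $(v_h,\hat v_h,\eta_h)\in U_h$ that simultaneously satisfies $\div v_h=p_h$ and the norm bound; the inf-sup estimate~\eqref{eq::hdg_stokes_lbb} then follows immediately by inserting this triple into the supremum, since $(\div v_h,p_h)=(p_h,p_h)=\|p_h\|_0^2$. To build $v_h$, I would first invoke the continuous mixed-boundary inf-sup condition~\cite{girault2012finite}: because $\Gamma_N$ has positive measure, there is a $v\in H^1_{0,D}(\Omega,\rr^3)$ with $\div v=p_h$ and $\|v\|_{H^1(\Omega)}\lesssim\|p_h\|_0$, and no mean-value constraint on $p_h$ is needed. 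I then set $v_h:=\IV v$ and $\hat v_h:=\IVh v_t$.

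The divergence constraint is handled by the commuting property~\eqref{eq:commut}: $\div v_h=\div(\IV v)=\IQ\div v=\IQ p_h=p_h$, where the last equality holds because $p_h\in Q_h=P^0(\mesh)$ is fixed by the $L^2$-projection $\IQ$. It remains to control $\|v_h,\hat v_h\|_\nabla$. Here I would use the observation that the interface contribution collapses: since $\IVh v_t$ equals the facetwise mean $\Pi^0 v_t$ of the (tangential-valued) field $v_t$, we have $\Pi^0(v_h-\hat v_h)_t=\Pi^0(\IV v-v)_t$. Consequently $h^{-1}\|\Pi^0(v_h-\hat v_h)_t\|_{\partial\mesh}^2\lesssim h^{-1}\|(\IV v-v)\|_{\partial\mesh}^2\lesssim\|v\|_{H^1(\Omega)}^2$ by a scaled trace inequality combined with the standard $\BDM$ interpolation error estimate. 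Together with the $H^1$-stability $\|\nabla\IV v\|_\mesh\lesssim\|v\|_{H^1(\Omega)}$ of the $\BDM$ interpolant, this gives $\|v_h,\hat v_h\|_\nabla\lesssim\|v\|_{H^1(\Omega)}\lesssim\|p_h\|_0$.

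To produce the vorticity component $\eta_h$ and pass to the $\nhe{\cdot}$ norm, I would apply the reverse Korn-type inequality~\eqref{eq::korn_hdg_reverse} of Lemma~\ref{lem::korn_hdg} to the pair $(v_h,\hat v_h)\in V_h\times\hat V_h$: it furnishes an $\eta_h\in W_h$ with $\nhe{v_h,\hat v_h,\eta_h}\lesssim\|v_h,\hat v_h\|_\nabla\lesssim\|p_h\|_0$. This completes the construction and hence establishes both the right-inverse claim and~\eqref{eq::hdg_stokes_lbb}.

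The routine parts are the commuting diagram and the scaling estimates; the two points deserving care are the existence of the continuous right inverse $v$ under the mixed Dirichlet/Neumann boundary condition, and the $\nabla$-stability of the interpolation, which is what makes the collapse $\Pi^0(v_h-\hat v_h)_t=\Pi^0(\IV v-v)_t$ essential --- without it one would be forced to estimate $\hat v_h$ against $v_h$ directly and lose control of the tangential jump. The main conceptual obstacle is in fact already packaged inside Lemma~\ref{lem::korn_hdg}, so that once the Fortin-type interpolation is shown stable in $\|\cdot\|_\nabla$, the reverse Korn inequality does the remaining work of supplying a compatible $\eta_h$.
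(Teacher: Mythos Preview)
Your proposal is correct and follows essentially the same route as the paper: lift $p_h$ to a continuous $v\in H^1_{0,D}(\Omega,\rr^3)$, set $v_h=\IV v$, $\hat v_h=\IVh v_t$, use the commuting diagram~\eqref{eq:commut} for $\div v_h=p_h$, establish $\|v_h,\hat v_h\|_\nabla\lesssim\|v\|_{H^1(\Omega)}$, and then invoke the reverse Korn inequality~\eqref{eq::korn_hdg_reverse} to produce $\eta_h$. The paper simply cites~\cite{LS_CMAME_2016} for the bound $\|v_h,\hat v_h\|_\nabla\lesssim\|v\|_{H^1(\Omega)}$, whereas you spell out the mechanism via the collapse $\Pi^0(v_h-\hat v_h)_t=\Pi^0(\IV v-v)_t$ together with a trace/interpolation estimate and the $H^1$-stability of $\IV$; both are fine.
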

\begin{proof}
  By classical results \cite{girault2012finite}, there exists a
  $u \in H^1(\Omega)$ such that
  \begin{equation}
    \label{eq:lbbhdg-1}
    \div v = p_h, \qquad 
    \| v \|_{H^1(\Omega)} \lesssim \| p_h \|_0.     
  \end{equation}
  Put $v_h = \IV v$ and $\hat v_h = \IVh v$ on each facet. Then,
  \eqref{eq:lbbhdg-1} and~\eqref{eq:commut} imply
  $ \div v_h = \div( \IV v) = \IQ \div v = p_h.$ Moreover, (as
  alluded to in \cite{LS_CMAME_2016}) it is easy to show that
  \begin{equation}
    \label{eq:lbbhdg-2}
    \left\| v_h, \hat v_h \right\|_\nabla \lesssim \| v \|_{H^1(\Omega)}.
  \end{equation}
  Choose $\eta_h\in W_h$ as in \eqref{eq::korn_hdg_reverse} of
  Lemma~\ref{lem::korn_hdg}. Then, by~\eqref{eq:lbbhdg-1}--\eqref{eq:lbbhdg-2}, 
  \[
    \nhe{v_h ,\hat v_h, \eta_h} \lesssim \|v_h, \hat v_h\|_{\nabla} \lesssim
    \| v \|_{H^1(\Omega)} \lesssim \|p_h\|_0,
  \]
  concluding the proof.
  \qqed
\end{proof}

\begin{theorem}[Error estimates for the HDG method]
  \label{thm::hdg_apr_error}
  Let $u, \hat u, \omega, p$ denote the exact solution that satisfies
  the regularity assumption of Lemma~\ref{lem::hdg_consistency} and
  let $((u_h, \hat u_h, \omega_h),p_h) \in U_h \times Q_h$ be the
  discrete solution of \eqref{eq::hdgstokes}. Then the errors in
  $u_h, \hat u_h, \omega_h$ can be bounded independently of the
  pressure error by 
  \begin{align}\label{eq::hdg_apr_error_vel}
    \nhe{u-u_h, u_t-\hat{u}_h, \omega -\omega_h} \lesssim h \|u\|_{H^2(\mathcal{T})}.
 \end{align}
 Furthermore, the pressure error satisfies
  \begin{align}\label{eq::hdg_apr_error_pres}
    \nu^{-1} \| p - p_h\|_0 \lesssim h ( \|u\|_{H^2(\mathcal{T})}
    + \nu^{-1} \| p \|_{H^1(\mesh)}).
  \end{align}
\end{theorem}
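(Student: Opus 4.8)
The plan is to run the standard Brezzi saddle-point argument, assembling the continuity, coercivity, and inf-sup estimates of Lemmas~\ref{lemma::hdg_continuity}--\ref{lem::hdgstokeslbb} with the consistency of Lemma~\ref{lem::hdg_consistency}; the only non-routine feature is that pressure-robustness must \emph{emerge} from the construction rather than be imposed. First I would write $\eta := (u - \IV u,\, u_t - \IVh u_t,\, \omega - \IW \omega)$ for the interpolation error (with $\omega = \curl u$) and $e_h := (\IV u - u_h,\, \IVh u_t - \hat u_h,\, \IW \omega - \omega_h) \in U_h$ for the discrete error, so that the total error splits as $(u - u_h,\, u_t - \hat u_h,\, \omega - \omega_h) = \eta + e_h$. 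The crucial preliminary observation is that the discrete velocity is genuinely solenoidal: since $\div u_h|_T \in P^0(T) = Q_h|_T$, testing \eqref{eq::hdgstokesB} with $q_h = \div u_h$ forces $\div u_h = 0$, while the commuting property \eqref{eq:commut} together with $\div u = 0$ gives $\div \IV u = \IQ \div u = 0$; hence the first component $e_h^{(1)} = \IV u - u_h$ is exactly divergence-free.

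For the velocity bound \eqref{eq::hdg_apr_error_vel} I would start from coercivity (Lemma~\ref{lemma::hdg_coercivity}), $\nu \nhe{e_h}^2 \lesssim \nu\, a^{\hdg}(e_h; e_h)$, and use bilinearity to write $a^{\hdg}(e_h; e_h) = a^{\hdg}(\text{total error}; e_h) - a^{\hdg}(\eta; e_h)$. The first term is rewritten by consistency (Lemma~\ref{lem::hdg_consistency}) tested against $e_h$: it equals $\nu^{-1}(\div e_h^{(1)},\, p - p_h)$, which \emph{vanishes} because $\div e_h^{(1)} = 0$. This single cancellation is precisely what decouples the velocity error from the pressure and yields pressure-robustness. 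The second term is controlled by continuity \eqref{eq::hgd_continuity-0} applied to $\eta$ (whose first slot is only element-wise $H^2$, which is all the Cauchy--Schwarz proof of \eqref{eq::hgd_continuity-0} actually uses), giving $a^{\hdg}(\eta; e_h) \lesssim \nhep{\eta}\, \nhe{e_h}$. Cancelling one factor of $\nhe{e_h}$ and invoking the interpolation estimate \eqref{eq::approx_U}, which (augmented by a routine trace bound for the $(\id - \Pi^0)$ facet term) controls $\nhep{\eta}$, gives $\nhe{e_h} \lesssim \nhep{\eta} \lesssim h\|u\|_{H^2(\mesh)}$; a triangle inequality with \eqref{eq::approx_U} then delivers \eqref{eq::hdg_apr_error_vel}.

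For the pressure bound \eqref{eq::hdg_apr_error_pres} I would split $p - p_h = (p - \IQ p) + (\IQ p - p_h)$, estimate the first piece by the standard projection bound $\|p - \IQ p\|_0 \lesssim h\|p\|_{H^1(\mesh)}$, and treat $q_h := p_h - \IQ p \in Q_h$ by duality. Lemma~\ref{lem::hdgstokeslbb} supplies $(v_h, \hat v_h, \eta_h) \in U_h$ with $\div v_h = q_h$ and $\nhe{v_h, \hat v_h, \eta_h} \lesssim \|q_h\|_0$. Because $\div v_h \in P^0(\mesh)$ is $L^2$-orthogonal to $p - \IQ p$, one gets $\|q_h\|_0^2 = (\div v_h, q_h) = -(\div v_h,\, p - p_h)$, and consistency rewrites the right-hand side as $-\nu\, a^{\hdg}(\text{total error}; v_h, \hat v_h, \eta_h)$. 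Splitting the total error again as $\eta + e_h$ and applying \eqref{eq::hgd_continuity-0} to $\eta$ and \eqref{eq::hgd_continuity} to $e_h$, both paired against $\nhe{v_h, \hat v_h, \eta_h} \lesssim \|q_h\|_0$, bounds this by $\nu(\nhep{\eta} + \nhe{e_h})\,\|q_h\|_0 \lesssim \nu\, h\|u\|_{H^2(\mesh)}\,\|q_h\|_0$ using the velocity estimate above. Cancelling $\|q_h\|_0$, recombining with the projection estimate, and dividing by $\nu$ produces \eqref{eq::hdg_apr_error_pres}.

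The step I expect to require the most care is the $\nu$-weight bookkeeping together with the justification that continuity is legitimately applied to $\eta$ even though $u - \IV u$ is not globally $H^1$-conforming: the volume and facet terms of $a^{\hdg}$ must be paired against the matching terms of $\nhep{\eta}$ so that \eqref{eq::approx_U} closes the estimate at the optimal order $h$. The conceptual heart, by contrast, is the one-line observation $\div e_h^{(1)} = 0$, where the $H(\div)$-conforming, exactly solenoidal construction pays off as pressure-robustness.
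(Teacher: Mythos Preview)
Your strategy is exactly the paper's: split into interpolation error $\eta$ and discrete error $e_h\in U_h$, exploit $\div e_h^{(1)}=0$ (via \eqref{eq:commut} and \eqref{eq::hdgstokesB}) to kill the pressure term after invoking consistency, then use coercivity, continuity, and the LBB estimate. The pressure argument is likewise the same.

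There is, however, one step you gloss over that is in fact the technical heart of the proof. You assert that continuity \eqref{eq::hgd_continuity-0} gives $a^{\hdg}(\eta;e_h)\lesssim\nhep{\eta}\,\nhe{e_h}$. It does not: that lemma produces $\nhep{\eta}\,\nhep{e_h}$, with the stronger norm on \emph{both} slots. The offending term is $(\eps(\eta)n,(\hat e_h-e_h)_t)_{\partial\mesh}$; because $\eps(\eta)$ is not piecewise constant you cannot invoke \eqref{eq:hdg-trivial-Piintro} to insert $\Pi^0$ into the second factor, so the contribution $h^{-1/2}\|(\id-\Pi^0)(e_h-\hat e_h)_t\|_{\partial\mesh}$ survives. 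To cancel $\nhe{e_h}$ as you propose, you therefore need the separate estimate $\nhep{e_h}\lesssim\nhe{e_h}$ for $e_h\in U_h$. The paper isolates this as \eqref{eq:stronger-nrm-2weaker}: since $\hat e_h$ is piecewise constant, $(\id-\Pi^0)(e_h-\hat e_h)_t=(\id-\Pi^0)e_h^{(1)}$, and then $h^{-1}\|(\id-\Pi^0)e_h^{(1)}\|_{\partial\mesh}^2\lesssim\|\nabla e_h^{(1)}\|_\mesh^2\lesssim\nhe{e_h}^2$, the last inequality being the Korn-like estimate of Lemma~\ref{lem::korn_hdg}. (The same reduction $\nhep{v_h,\hat v_h,\eta_h}\lesssim\nhe{v_h,\hat v_h,\eta_h}$ is needed again in your pressure step when you pair $\eta$ against the LBB test function.)

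So your closing paragraph locates the delicate point on the wrong side: it is not the regularity of $\eta$ that requires care, but the passage from $\nhep{\cdot}$ to $\nhe{\cdot}$ on the discrete argument, and this is the one place in the error analysis where the new vorticity variable $\omega_h\in W_h$ actually earns its keep through Lemma~\ref{lem::korn_hdg}.
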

\begin{proof}
  Let $E= (u - u_h, \hat u - \hat u_h, \omega - \omega_h)$ and
  $E_h = (\IV u - u_h, \IVh \hat u - \hat u_h, \IW\omega - \omega_h)$.
  Then  $\EE = E - E_h$ represents the interpolation errors.  Since $E_h \in U_h$,
  \begin{align*}
    \nu \nhe{E_h}^2
    & \lesssim \nu  a^{\hdg}(E_h; E_h)  = \nu a^{\hdg}(E - \EE ; E_h) 
    && \text{ by Lemma~\ref{lemma::hdg_coercivity}}
    \\    
    & = (\div (\IV u - u_h), p  -p_h)  - \nu a^{\hdg}(\EE ; E_h)
    && \text{ by Lemma~\ref{lem::hdg_consistency}.}
  \end{align*}
  By~\eqref{eq:commut},  $\div (\IV u) = \IQ \div u = 0$. Moreover,   by  \eqref{eq::hdgstokesB}, $\div u_h = 0$. Hence
  \begin{equation}
    \label{eq:cnty-use}
    \nu \nhe{E_h}^2
    =  - \nu a^{\hdg}(\EE ; E_h)
    \lesssim \nu \nhep{\EE} \nhep{E_h},
  \end{equation}
  by Lemma~\ref{lemma::hdg_continuity}. Now we claim that
  \begin{equation}
    \label{eq:stronger-nrm-2weaker}
    \nhep{E_h} \lesssim \nhe{E_h}.    
  \end{equation}
  To see this, first note that local scaling arguments give
  \begin{equation}
    \label{eq:1200}
    h^{-1} \| (I - \Pi^0) v_h \|^2_{\partial\mesh}
    \lesssim \| \nabla v_h \|^2_\mesh,
  \end{equation}
  for any $v_h \in V_h$.  Then, letting
  $E_h^u = \IV u - u_h$, $E_h^{\hat u} = \IVh u - \hat u_h$, note that
  on each facet,
  $(I - \Pi^0) \left( E_h^u - E_h^{\hat u} \right) = (I - \Pi^0)E_h^u.$
  Hence the extra terms in $\nhep{E_h}^2$ that are not in
  $\nhe{E_h}^2$ can be bounded by applying~\eqref{eq:1200} and
  \eqref{eq:epsbdr-int-trivial} with $v_h = E_h^u$ to get
  \[
    \nhep{E_h}^2 \lesssim \nhe{E_h}^2 + \| E_h^u, E_h^{\hat u}\|_\nabla^2
    \lesssim \nhe{E_h}^2,
  \]
  by Lemma~\ref{lem::korn_hdg}.  This
  proves~\eqref{eq:stronger-nrm-2weaker}. Using~\eqref{eq:stronger-nrm-2weaker}
  in~\eqref{eq:cnty-use}, we conclude that
  $\nhe{E_h} \lesssim \nhep{\EE}$. Combining with triangle inequality,
  \begin{equation}
    \label{eq:Ebound}
    \nhe{E} \le \nhe{\EE} + \nhe{E_h} \lesssim \nhep{\EE}
    \lesssim h \| u \|_{H^2(\mesh)},
  \end{equation}
  where we have applied~\eqref{eq::approx_U} in the last step.  This
  proves~\eqref{eq::hdg_apr_error_vel}.

  For the pressure estimate, we begin with triangle inequality
  and Lemma~\ref{lem::hdgstokeslbb}:
\begin{align*}
  \nu^{-1} \| p - p_h \|_0 &\le \nu^{-1} \| p - \IQ p \|_0
                             +\nu^{-1}  \| \IQ p - p_h \|_0 \\
  &\lesssim \nu^{-1} \| p - \IQ p \|_0 +   \sup_{(v_h, \hat{v}_h, \eta_h) \in U_h} \frac{\nu^{-1}(\div v_h, \IQ p -  p_h )}{ \nhe{v_h, \hat v_h, \eta_h}}.
\end{align*}
To bound the numerator of the supremum, we use  Lemma~\ref{lem::hdg_consistency}: 
\begin{align*}
  \nu^{-1}(\div v_h, \IQ p - p_h)
  &= \nu^{-1} (\div v_h, p-p_h)
    =  a^{\hdg}(E; v_h, \hat{v}_h, \eta_h)
  \\ 
  & \lesssim \nhe{E} \nhe{v_h, \hat{v}_h, \eta_h}.
\end{align*}
Hence the already proved estimate~\eqref{eq:Ebound}, together with the
standard $L^2$ projection error estimates finish the proof
of~\eqref{eq::hdg_apr_error_pres}.
\qqed
\end{proof}

\section{An MCS formulation with 
$H(\div)$-conforming vorticity} \label{sec::mcs}

In this section we derive a new mixed method for the approximation of
\eqref{eq::mixedstressstokes}, motivated by the weak
formulation~\eqref{eq::mixedstressstokesweak}. Let
$\sigma_h \in \Sigma_h$ and $(v_h,\hat v_h, \eta_h) \in U_h$. Defining
\begin{equation}
  \label{eq:div-hybrid-pairing}
\begin{aligned}
  \langle \div \sigma_h ; \, v_h,\hat v_h, \eta_h \rangle_{U_h} 
  := & \;
    (\div\sigma_h,   v_h)_\mesh
    -
    \left((\sigma_h)_{nn},  (v_h)_n \right)_{\partial \mesh}
  \\
  &
    -\left( (\sigma_h)_{nt},  (\hat v_h)_t\right)_{\partial\mesh}
    +
    \left( \sigma_h, \kappa(\eta_h)\right)_\mesh,
\end{aligned}  
\end{equation}
consider the terms on the right.  When $(\sigma_h)_{nt}$ is continuous
across element interfaces, the first two terms together realizes the
duality pairing introduced in Section~\ref{sec::prelim}, namely
$\langle \div \sigma, v_h\rangle_{\div}$, per
\cite[Theorem~3.1]{mcsII}.  The third term is used to impose the
$nt$-continuity of the viscous stress (and prior works \cite{mcsI,
mcsII, lederer2019mass} provided enough rationale to employ
$nt$-continuous finite elements for viscous stresses). Note, that
a similar nt-continuous approximation of the gradient (but not the
physical viscous stresses $\eps(u)$) was also already
considered in~\cite{MR3941890}.  Due to the Dirichlet conditions
built into $\hat{V}_h$ on $\Gamma_D$ (see \eqref{def::fespaces_Vhat}),
this term is comprised only of integrals over facets in the interior
and on $\Gamma_N$, with the latter enforcing $\sigma_{nt}=0$ in
$\Gamma_N$ as demanded by \eqref{eq::mixedstressstokes-f}.  Finally,
the last term above is used to weakly incorporate the symmetry
constraint \eqref{eq::mixedstressstokes-c}.  This technique of
imposing symmetry weakly is widely used in finite elements for linear
elasticity~\cite{MR840802, MR2336264, MR2449101,
MR2629995,MR1464150,GopalGuzma12,Stenberg1988}.

Viewing~\eqref{eq::mixedstressstokesweak} in terms of 
$\langle \div \cdot, \cdot \rangle_{U_h}$, we are
led to the following mixed method: find
$(u_h, \hat u_h, \omega_h) \in U_h$ and
$ (\sigma_h,p_h) \in (\Sigma_h \times Q_h)$ satisfying 
 \begin{subequations} \label{eq::mcsstokes}
    \begin{alignat}{2} 
      {\nu}^{-1}(\sigma_h, \tau_h) + \langle \div\tau_h; \,
      u_h,\hat u_h, \omega_h \rangle_{U_h} &= 0, \\
      -\langle \div\sigma_h; \,v_h,\hat v_h, \eta_h \rangle_{U_h}
      -(\div v_h, p_h) +  c(\omega_h,\eta_h)&= (f,v_h),
      \\
     -(\div u_h, q_h) &= 0,
   \end{alignat}
 \end{subequations}
 for all $\tau_h \in \Sigma_h,$ $(v_h, \hat v_h, \eta_h) \in U_h,$ and
 $q_h \in Q_h$,
 with the stabilizing bilinear form $c(\omega_h,\eta_h):= \nu h^2
 (\div \omega_h, \div \eta_h)_\Omega$. Note that since $\omega_h$
 approximates the vorticity $\omega = \curl(u)$, we have
 $\div \omega=0$, so $c(\cdot, \cdot)$ is a consistent addition.
 Although the formulation \eqref{eq::mcsstokes} is very similar to the
 formulations from \cite{mcsI} and \cite{mcsII}, note the following
 differences. First, while the $nt$-continuity of viscous stresses was
 built into the spaces in~\cite{mcsI,mcsII}, now it is incorporated as
 an equation of the method by the well-known hybridization
 technique.  Second, although we use the same local
 stress finite element space as in \cite{mcsI}, we use the
 weak symmetric setting from \cite{mcsII}. In the latter, the Lagrange
 multiplier for the weak symmetry constraint was given by an
 element-wise discontinuous approximation, whereas here it is in 
 the div-conforming~$W_h$.

\subsection{Stability of the MCS method}

From the terms in~\eqref{eq::mcsstokes}, we anticipate that the norms
$\| \cdot \|_{U_h}$ and $\| \cdot \|_\eps$ are more natural for the
analysis of the MCS method (in contrast to the HDG method).
The latter appears in the next lemma.

\begin{lemma}[Continuity of MCS formulation]
  \label{th:mcscontinuity}
  The bilinear forms in \eqref{eq::mcsstokes} are
  continuous in the sense that for all $\sigma_h ,\tau_h \in \Sigma_h$,
  $p_h \in Q_h$, $\eta_h \in W_h$ and $ (u_h,\hat u_h, \omega_h) \in
  U_h,$ in addition to the obvious estimates
  \[
    \nu^{-1}(\sigma_h,\tau_h)
    \lesssim {{\nu}}^{-1/2}  \| \sigma_h \|_0\, {{\nu}}^{-1/2}  \| \tau_h \|_0,
    \quad \textrm{and} \quad
    c(\omega_h, \eta_h)
    \lesssim \nu h^2 \| \div\omega_h \|_0\, \|\div\eta_h\|_0,
  \]
  the following estimates  hold:
  \begin{subequations}
    \label{eq:msc-cont}
    \begin{align}
      \label{eq:msc-cont-c}
      (\div u_h, p_h)
      &\lesssim \| u_h,\hat u_h, \omega_h\|_{\eps} \,\| p_h \|_{0},
      \\     \label{eq:msc-cont-d}
      \langle \div\sigma_h; \, u_h,\hat u_h, \omega_h \rangle_{U_h}
      &\lesssim \| \sigma_h \|_0 \,\|u_h,\hat u_h, \omega_h \|_{\eps}.
    \end{align}
  \end{subequations}
\end{lemma}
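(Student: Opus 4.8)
The plan is to establish the two non-obvious continuity estimates~\eqref{eq:msc-cont-c} and~\eqref{eq:msc-cont-d} by reducing each to a Cauchy--Schwarz inequality followed by the control already encoded in the $\veps$-norm. For~\eqref{eq:msc-cont-c}, the key observation is that $(\div u_h, p_h)$ is controlled by $\|\eps(u_h)\|_\mesh$, exactly as in the HDG continuity result~\eqref{eq::hgd_continuity-div}. Indeed, on each element the algebraic identity $\tfrac13\|\div u_h\|_T^2 = \|\veps(u_h)\|_T^2 - \|\dev\veps(u_h)\|_T^2 \le \|\veps(u_h)\|_T^2$ gives $\|\div u_h\|_\mesh \lesssim \|\eps(u_h)\|_\mesh \le \|u_h,\hat u_h,\omega_h\|_\eps$. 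A single Cauchy--Schwarz step $(\div u_h, p_h) \le \|\div u_h\|_0 \|p_h\|_0$ then yields~\eqref{eq:msc-cont-c}.

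The genuine work is in~\eqref{eq:msc-cont-d}. I would bound each of the four terms in the definition~\eqref{eq:div-hybrid-pairing} of $\langle \div\sigma_h;\,u_h,\hat u_h,\omega_h\rangle_{U_h}$ separately against a piece of $\|\sigma_h\|_0 \,\|u_h,\hat u_h,\omega_h\|_\eps$. The last (volume) term $(\sigma_h,\kappa(\eta_h))_\mesh$ is most easily handled: writing $\kappa(\omega_h)$ in terms of $\curl u_h$ via $\veps(u_h)=\nabla u_h - \kappa(\curl u_h)$, one rewrites this as a pairing that is controlled by $\|\sigma_h\|_0$ and the $\|\curl u_h - \omega_h\|_\mesh$ contribution to the $\veps$-norm, up to the symmetric part that is orthogonal to the skew-symmetric $\kappa(\cdot)$. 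The main obstacle is the two boundary terms $-((\sigma_h)_{nn},(v_h)_n)_{\partial\mesh}$ and $-((\sigma_h)_{nt},(\hat v_h)_t)_{\partial\mesh}$: a naive Cauchy--Schwarz on $\partial\mesh$ introduces traces of $\sigma_h$ that must be scaled back into the volume norm $\|\sigma_h\|_0$ using a discrete trace inequality $h\|(\sigma_h)_{nn}\|_{\partial T}^2 + h\|(\sigma_h)_{nt}\|_{\partial T}^2 \lesssim \|\sigma_h\|_T^2$ (valid since $\sigma_h$ is piecewise polynomial), and the matching velocity traces must be absorbed into the jump terms $h^{-1}\|\Pi^0(u_h-\hat u_h)_t\|_{\partial\mesh}^2$.

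The subtle point here is that the raw normal and tangential traces $(v_h)_n$ and $(\hat v_h)_t$ are \emph{not} directly present in $\|\cdot\|_\eps$; only the \emph{tangential jump} $\Pi^0(u_h-\hat u_h)_t$ and the interior quantities appear. I expect the right approach is to exploit that $(\sigma_h)_{nt} \in P^0(F,\rr^2)$ is facet-wise constant, so one may insert $\Pi^0$ freely, $((\sigma_h)_{nt},(\hat v_h)_t) = ((\sigma_h)_{nt}, \Pi^0(\hat v_h - v_h)_t) + ((\sigma_h)_{nt}, \Pi^0(v_h)_t)$, and then recombine the second piece against the $nn$-term using the single-valuedness of $\hat u_h$ together with the $H(\div)$-conformity of $v_h$ (so $(v_h)_n$ has no jump), ultimately pairing the boundary traces of $\sigma_h$ against $\Pi^0(u_h - \hat u_h)_t$ in a way that the $nt$-continuity structure of $\Sigma_h$ makes summable. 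After these reductions, grouping terms and applying Cauchy--Schwarz facet by facet, with the trace inequality to convert $\sigma_h$ boundary norms to $\|\sigma_h\|_0$, delivers the bound $\|\sigma_h\|_0 \|u_h,\hat u_h,\omega_h\|_\eps$. I anticipate that verifying the clean cancellation/recombination of the $nn$ and $nt$ boundary terms, rather than any single estimate, will be the crux, since it is exactly where the design of the space $\Sigma_h$ and the hybridized pairing~\eqref{eq:div-hybrid-pairing} must conspire to keep only the quantities present in the $\veps$-norm.
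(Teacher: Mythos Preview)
Your treatment of~\eqref{eq:msc-cont-c} is correct and matches the paper.

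For~\eqref{eq:msc-cont-d}, however, your plan has a gap. You propose to bound the four terms of~\eqref{eq:div-hybrid-pairing} separately and then ``recombine'' the boundary pieces, but you never say what happens to the volume term $(\div\sigma_h, u_h)_\mesh$. That term on its own is \emph{not} bounded by $\|\sigma_h\|_0$ times anything in $\|\cdot\|_\eps$: an inverse inequality only gives $h^{-1}\|\sigma_h\|_0\,\|u_h\|_0$, and $\|u_h\|_0$ is not part of the $\eps$-norm. Likewise $((\sigma_h)_{nn},(u_h)_n)_{\partial\mesh}$ cannot be controlled on its own. Your proposed recombination of the $nt$-term with the $nn$-term, invoking ``the $nt$-continuity structure of $\Sigma_h$'', is also off: $\Sigma_h$ here is the \emph{discontinuous} version (see~\eqref{def::fespaces_Sigma}), so no facet cancellation across elements is available from that side.

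What the paper does---and what you are effectively circling around---is to first pass to the integrated-by-parts identity~\eqref{eq::mcsbblf},
\[
  \langle \div\sigma_h;\, u_h,\hat u_h, \omega_h \rangle_{U_h}
  = -(\sigma_h,\nabla u_h - \kappa(\omega_h))_\mesh
  + ((\sigma_h)_{nt},(u_h-\hat u_h)_t)_{\partial\mesh},
\]
which merges the three problematic terms $(\div\sigma_h,u_h)_\mesh$, $((\sigma_h)_{nn},(u_h)_n)_{\partial\mesh}$, and $((\sigma_h)_{nt},(\hat u_h)_t)_{\partial\mesh}$ into exactly the volume pairing and tangential-jump pairing that the $\eps$-norm controls. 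Since $\sigma_h$ is trace-free one may replace $\nabla u_h - \kappa(\omega_h)$ by $\dev\nabla u_h - \kappa(\omega_h)$, and since $(\sigma_h)_{nt}$ is facet-wise constant one may insert $\Pi^0$ in the boundary term. A single Cauchy--Schwarz together with the discrete trace estimate~\eqref{eq:easy-to-prove} then gives
\[
  \langle \div\sigma_h;\, u_h,\hat u_h, \omega_h \rangle_{U_h}
  \lesssim \|\sigma_h\|_0\big(\|\dev\nabla u_h - \kappa(\omega_h)\|_\mesh^2
  + h^{-1}\|\Pi^0(u_h-\hat u_h)_t\|_{\partial\mesh}^2\big)^{1/2},
\]
and the argument of~\eqref{eq:dev-full-Pi} combined with Lemma~\ref{lemma::normequi} converts the parenthesis to $\|u_h,\hat u_h,\omega_h\|_\eps$. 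The ``clean cancellation'' you anticipate as the crux is nothing more than the integration by parts already recorded in~\eqref{eq::mcsbblf}; once you start there, no separate handling of the $nn$-trace or of $(\div\sigma_h,u_h)$ is needed.
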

\begin{proof}
  Inequality~\eqref{eq:msc-cont-c} is proved just
  like~\eqref{eq::hgd_continuity-div}. To prove~\eqref{eq:msc-cont-d},
  let us first note an equivalent and more compact form of $\langle
  \div\sigma_h;\, v_h,\hat v_h, \eta_h \rangle_{U_h}$ obtained by
  integrating \eqref{eq:div-hybrid-pairing} by parts (see e.g,
  \cite[eq.~(3.11)]{mcsII}), namely
  \begin{align} \label{eq::mcsbblf}
    \langle \div\sigma_h;\, 
    & v_h,\hat v_h, \eta_h \rangle_{U_h} 
      = -(\sigma_h, \nabla v_h - \kappa(\eta_h))_\mesh
      + ( (\sigma_h)_{nt},  (v_h - \hat v_h)_t)_{\partial \mesh}.
  \end{align}
  Using~\eqref{eq::mcsbblf}, the fact that $\sigma_h$ is trace-free,
  the Cauchy-Schwarz inequality, and the following estimate (which follows by a local scaling argument using a specific mapping mentioned in the beginning of \S\ref{sec::fem_ne_io}), 
  \begin{equation}
    \label{eq:easy-to-prove}
      h^{1/2} \| (\sigma_h)_{nt} \|_{\partial \mesh} \lesssim \| \sigma_h
  \|_\mesh,
  \end{equation}
  we get 
  \begin{align*}
    \langle \div\sigma_h; \, u_h,\hat u_h, \omega_h \rangle_{U_h}
    & \lesssim \| \sigma_h \|_0 \left(
      \| \dev \nabla u_h - \kappa(\omega_h) \|^2_\mesh
      + h^{-1} \| \Pi^0( u_h - \hat u_h\|_{\partial \mesh}^2
      \right)^{1/2}
    \\
    & \lesssim \| \sigma_h\|_0
      \left( \| u_h,\hat u_h, \omega_h\|_{U_h}^2
      + h^2 \| \div\omega_h \|_\mesh^2\right)^{1/2},
  \end{align*}
  where the last inequality is due to the same argument as in
  \eqref{eq:dev-full-Pi}. Thus~\eqref{eq:msc-cont-d} follows from
  Lemma~\ref{lemma::normequi}.
  \qqed
\end{proof}

\begin{lemma}\label{lemma:lbb_mcs} For any $(u_h, \hat{u}_h, \omega_h)
  \in U_h$ there exists a $(\tau_h,q_h) \in \Sigma_h \times Q_h$ satisfying
  \begin{align}
    \label{eq:lbb-mcs-1}
    \|\tau_h\|_0 + \| q_h \|_0
    & \lesssim \|u_h, \hat{u}_h, \omega_h\|_{U_h}
      + \| \div u_h \|_0,
    \\ \label{eq:lbb-mcs-2}
    \langle \div\tau_h; \, u_h,\hat u_h, \omega_h \rangle_{U_h} - (\div u_h,q_h)  & \gtrsim (\|u_h, \hat{u}_h, \omega_h\|_{U_h} + \| \div u_h \|_0)^2. 
  \end{align}   
\end{lemma}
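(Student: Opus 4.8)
The plan is to prove both bounds by an explicit construction of the pair $(\tau_h,q_h)$, working throughout with the compact form~\eqref{eq::mcsbblf} of $\langle\div\tau_h;\,u_h,\hat u_h,\omega_h\rangle_{U_h}$. The divergence part is immediate: since $\div V_h\subseteq Q_h$, I would take $q_h=-\div u_h$, so that $-(\div u_h,q_h)=\|\div u_h\|_0^2$ and $\|q_h\|_0=\|\div u_h\|_0$. The real work is to build a $\tau_h\in\Sigma_h$ for which the volume and facet terms of~\eqref{eq::mcsbblf} reproduce the two pieces $\|\dev\nabla u_h-\Pi^0\kappa(\omega_h)\|_\mesh^2$ and $h^{-1}\|\Pi^0(u_h-\hat u_h)_t\|_{\partial\mesh}^2$ of $\|u_h,\hat u_h,\omega_h\|_{U_h}^2$, while retaining $\|\tau_h\|_0\lesssim\|u_h,\hat u_h,\omega_h\|_{U_h}$.

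Set $G:=\dev\nabla u_h-\Pi^0\kappa(\omega_h)$, which by~\eqref{eq:kappa-identities} and the element-wise constancy of $\curl u_h$ has symmetric part $\dev\eps(u_h)$ and skew part $\skw(G)=\kappa(\curl u_h-\Pi^0\omega_h)$. I would choose $\tau_h=\sigma_h+\kappa_h$, with $\kappa_h:=\kappa(\Pi^0\omega_h-\curl u_h)=-\skw(G)$ an \emph{element-wise constant} skew tensor (so $\kappa_h\in P^0(\mesh,\kk)\subset\Sigma_h$), and $\sigma_h$ the \emph{symmetric} member of $\Sigma_h$ fixed by
\[
  \Pi^0\sigma_h=-\dev\eps(u_h),\qquad
  (\sigma_h)_{nt}\big|_F=\tfrac1h\,\Pi^0(u_h-\hat u_h)_t-(\kappa_h)_{nt}\big|_F .
\]
The $1/h$ scaling of the facet datum is what turns the boundary term of~\eqref{eq::mcsbblf} into $h^{-1}\|\Pi^0(u_h-\hat u_h)_t\|_{\partial\mesh}^2$, and the bound $\|\tau_h\|_0\lesssim\|u_h,\hat u_h,\omega_h\|_{U_h}$ (hence~\eqref{eq:lbb-mcs-1}) would follow from reference-element scaling (a facet $nt$-datum $g$ contributes $\sim h\|g\|_F^2$ to $\|\cdot\|_T^2$) together with $\|\dev\eps(u_h)\|_\mesh\le\|G\|_\mesh$ and $\|\skw(G)\|_\mesh\le\|G\|_\mesh$. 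Substituting $\tau_h$ into~\eqref{eq::mcsbblf}, the two symmetric--skew orthogonalities ($\sigma_h$ does not pair with $\kappa(\omega_h)$, and $\kappa_h$ does not pair with $\eps(u_h)$) collapse the volume integral to $\|\dev\eps(u_h)\|_\mesh^2+\|\skw(G)\|_\mesh^2=\|G\|_\mesh^2$; adding the facet term and $-(\div u_h,q_h)=\|\div u_h\|_0^2$ then gives~\eqref{eq:lbb-mcs-2}.

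The step I expect to be the genuine obstacle is the coupling of the non-constant part of $\tau_h$ with the vorticity. In $-(\tau_h,\nabla u_h-\kappa(\omega_h))_\mesh$ one meets $(\tau_h,(\id-\Pi^0)\kappa(\omega_h))_\mesh$, and by~\eqref{eq:dev-full-Pi} one has $\|(\id-\Pi^0)\kappa(\omega_h)\|_\mesh\sim h\|\div\omega_h\|_0$. This quantity is \emph{not} controlled by the right-hand side of~\eqref{eq:lbb-mcs-1}: by Lemma~\ref{lemma::normequi}, $h^2\|\div\omega_h\|_0^2$ is a separate contribution to $\nhe{u_h,\hat u_h,\omega_h}^2$, so any surviving cross-term of size $(\|u_h,\hat u_h,\omega_h\|_{U_h}+\|\div u_h\|_0)\,h\|\div\omega_h\|_0$ would wreck the lower bound. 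The construction is designed precisely to annihilate it: $(\id-\Pi^0)\kappa(\omega_h)$ is skew, $\sigma_h$ is symmetric, and $\kappa_h$ is element-wise \emph{constant} and hence $L^2$-orthogonal to $(\id-\Pi^0)\kappa(\omega_h)$, so the term is identically zero. What remains is routine bookkeeping: verifying that the symmetric trace-free subspace of $\Sigma_h$ is unisolvent for the degrees of freedom used to define $\sigma_h$ (a reference-element argument), and invoking Lemma~\ref{lemma::normequi_tech}, in particular~\eqref{eq:dev-full-Pi}, to move between $\|G\|_\mesh$ and $\|u_h,\hat u_h,\omega_h\|_{U_h}$.
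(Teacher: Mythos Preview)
There is a genuine gap, precisely at the step you flag as ``routine bookkeeping.''  Your construction hinges on producing, on each element $T$, a \emph{symmetric} $\sigma_h\in\Sigma_h|_T$ with independently prescribed $\Pi^0\sigma_h\in\dev(\text{Sym})$ (5 conditions) and $(\sigma_h)_{nt}|_F$ on all four facets (8 conditions).  This requires the symmetric subspace $\{\tau\in\Sigma_h|_T:\tau=\tau^\trans\}$ to have dimension at least $13$, but it does not.  On the reference tetrahedron with vertices $0,e_1,e_2,e_3$, write $\tau=A+\sum_{j=1}^3 x_jB^j$ with $A,B^j$ symmetric and trace-free.  The constraint $(I-\Pi^0_F)\tau_{nt}|_F=0$ on the three coordinate faces forces each $B^j$ to be diagonal, and the constraint on the remaining face then forces $B^1=B^2=B^3$.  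Hence the symmetric subspace is only $5+2=7$ dimensional, and a symmetric $\sigma_h$ meeting your $13$ conditions cannot exist for generic data.  The unisolvence ``reference-element argument'' you defer would fail, and with it your mechanism for annihilating the cross-term $(\tau_h,(I-\Pi^0)\kappa(\omega_h))$.

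You have correctly identified the real obstacle---that cross-term, of size $\|G\|_\mesh\,h\|\div\omega_h\|_0$, is exactly what prevents a naive construction from yielding~\eqref{eq:lbb-mcs-2}.  The paper does \emph{not} kill it by symmetry; its $\tau_h=\gamma_0\tau_h^0+\gamma_1\tau_h^1$ is built from the (generally non-symmetric) constant tensors $S^F_i$ of \cite[Lemma~5.1]{mcsI} and the bubbles $\lambda^F S^F_i$, and the various mixed terms are balanced by Young's inequality through the choice of $\gamma_0,\gamma_1$, following \cite[Lemma~6.5]{mcsII}.  If you want to salvage your idea, note that in $\Sigma_h|_T$ the only skew functions are the \emph{constants} $P^0(T,\kk)$ (one checks that a linear $\kappa(v)$ with $\kappa(v)_{nt}|_F\in P^0$ on all facets forces $v$ constant), so there is no room to symmetrize the non-constant part of a generic $\tau_h$; any workable construction must tolerate a non-symmetric $(I-\Pi^0)\tau_h$ and control the resulting cross-term rather than eliminate it.
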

\begin{proof}
  For each element $T \in \mesh$ and each facet $F\subset \partial T$,
  there are matrix fields $S^F_0, S^F_1$, supported on $T$, with the
  following properties: on $T$, both $S^F_0, S^F_1$ are constant
  matrices in $\dd$, their boundary trace  $(S^F_i)_{nt}|_F$,
  for $i \in \{0, 1\},$
  are constant unit-length vector fields on $F$ that form a basis for  the tangent
  space $n_F^\perp$, and $(S^F_i)_{nt}|_{F'}$ vanishes on all other
  facets $F' \ne F$ in $\mathcal{F}_h$. Such matrix fields are
  exhibited in~\cite[Lemma~5.1]{mcsI}.
  Given  any $(u_h, \hat{u}_h, \omega_h) \in U_h$, set
  \begin{align*}
    \tau_h^0 &:= \sum_{T\in\mesh}\sum_{F\subset\partial T} \sum_{i \in \{0,1\}}- ( S^F_i : \Pi^0\dev(\nabla u_h - \kappa(\omega_h))) \; \lambda^F S^F_i,
    \\
    \tau_h^1 &:= \sum_{T\in\mesh}\sum_{F\subset\partial T} \sum_{i \in \{0,1\}} \frac{1}{\sqrt{h}}\Pi^0(\hat u_h - u_h)_t\; S^F_i,
  \end{align*}
  where $\lambda_F$ is the linear barycentric coordinate function
  associated to the vertex opposite to the facet $F$. Since
  $\lambda^FS^F_i$ has a vanishing $nt$-trace and
  $\Pi^0\dev(\nabla u_h - \kappa(\omega_h)) \in \dd$, we see that
  $\tau_h = \gamma_0 \tau_h^0 + \gamma_1 \tau_h^1$, for any
  $\gamma_0,\gamma_1 \in \rr$, is an element of $\Sigma_h$. Also set
  $q_h = -\div u_h,$ so that
  $-(\div u_h, q_h) = \| \div u_h \|_0^2$. For these choices,
  \eqref{eq:lbb-mcs-1} obviously holds as long as $\gamma_i$ is chosen
  independent of $h$ and $\nu$. Indeed, such $\gamma_i$ can be chosen
  to also ensure that
  \[
    \langle \div\tau_h;\, u_h,\hat u_h, \omega_h \rangle_{U_h}
    \gtrsim \|u_h,\hat{u}_h, \omega_h\|_{U_h}^2,
  \]
  so that~\eqref{eq:lbb-mcs-2} also holds. This follows from an  argument which (we
  omit and) is similar to that detailed in \cite[Lemma~6.5]{mcsII}, proceeding 
  simply by appropriately combining Young and Cauchy-Schwarz
  inequalities.
  \qqed
\end{proof}

The combined bilinear form of the MCS method~\eqref{eq::mcsstokes} is given by 
\begin{align*} 
  B(\sigma_h, u_h, \hat{u}_h, \omega_h, p_h; \tau_h, v_h, \hat{v}_h, \eta_h, q_h) := 
  &  \nu^{-1}(\sigma_h, \tau_h)
    + \langle \div\tau_h; \, 
    u_h,\hat u_h, \omega_h \rangle_{U_h}\\
  &- \langle \div\sigma_h;\, v_h,\hat v_h, \eta_h \rangle_{U_h} \\
  & - (\div u_h, q_h) -(p_h, \div v_h) + c(\omega_h, \eta_h).
\end{align*}
Define a  norm on the product space $S_h = \Sigma_h \times
V_h \times \hat V_h \times W_h \times Q_h$ by
\begin{align*}
  \|\sigma_h,  u_h, \hat{u}_h, \omega_h, p_h\|_{S_h}
  := \nu^{-1/2} ( \|\sigma_h\|_0 + \|p_h\|_0 ) + \nu^{1/2} \|u_h, \hat{u}_h, \omega_h\|_{\eps}.
\end{align*}

\begin{lemma}[Inf-sup condition for MCS method]
  \label{lemma::infsup_mcs}
  For any $r = (\sigma_h, u_h, \hat{u}_h, \omega_h, p_h) \in S_h$, be
  arbitrary, there exists an
  $s\in S_h$ such that 
  \begin{align}\label{eq::infsup_mcs}
    B(r; s) &  \gtrsim  \| r\|^2_{S_h}, \quad \text{ and}
    \\ \label{eq::infsup_mcs-2}
    \| s \|_{S_h} & \lesssim \| r \|_{S_h}.
  \end{align}
\end{lemma}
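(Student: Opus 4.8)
The plan is to prove the combined inf-sup condition by the standard device of \emph{explicitly constructing} a test function $s$ for each given $r = (\sigma_h, u_h, \hat u_h, \omega_h, p_h)$, assembling it as a $\nu$-weighted combination $s = s_1 + \delta_2 \tilde s_2 + \delta_3 s_3$ of three ingredients, each designed to control a different group of terms in $\|r\|_{S_h}^2$. By Lemma~\ref{lemma::normequi} it suffices to bound below the five quantities $\nu^{-1}\|\sigma_h\|_0^2$, $\nu^{-1}\|p_h\|_0^2$, $\nu\|u_h, \hat u_h, \omega_h\|_{U_h}^2$, $\nu\|\div u_h\|_0^2$ and $\nu h^2\|\div\omega_h\|_0^2$. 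The three ingredients are built respectively from the coercive diagonal of $B$, from the MCS-LBB of Lemma~\ref{lemma:lbb_mcs}, and from the pressure LBB of Lemma~\ref{lem::hdgstokeslbb} (transported to the $\|\cdot\|_\eps$ norm via Lemma~\ref{lemma::normequi}).

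First I would take $s_1 = (\sigma_h, u_h, \hat u_h, \omega_h, -p_h)$. The key point is the sign flip in the pressure slot: with it, both the $\sigma$--velocity coupling and the pressure--velocity coupling in $B(r; s_1)$ cancel exactly, leaving only the coercive contributions $B(r; s_1) = \nu^{-1}\|\sigma_h\|_0^2 + \nu h^2\|\div\omega_h\|_0^2$, while $\|s_1\|_{S_h} = \|r\|_{S_h}$. This controls the stress and the $\div\omega_h$ terms. Next, applying Lemma~\ref{lemma:lbb_mcs} to $(u_h, \hat u_h, \omega_h)$ yields $(\tau_h^\ast, q_h^\ast)$, and I would set $\tilde s_2 = (\nu\tau_h^\ast, 0, 0, 0, \nu q_h^\ast)$, the factor $\nu$ restoring the correct homogeneity so that $\|\tilde s_2\|_{S_h} \lesssim \nu^{1/2}(\|u_h, \hat u_h, \omega_h\|_{U_h} + \|\div u_h\|_0) \lesssim \|r\|_{S_h}$. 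Then $B(r; \tilde s_2)$ equals $(\sigma_h, \tau_h^\ast)$ plus $\nu$ times the coercive MCS-LBB expression, hence is bounded below by $c_\ast\nu(\|u_h, \hat u_h, \omega_h\|_{U_h} + \|\div u_h\|_0)^2$ minus a cross term $\lesssim \|\sigma_h\|_0(\|u_h, \hat u_h, \omega_h\|_{U_h} + \|\div u_h\|_0)$. Finally, Lemma~\ref{lem::hdgstokeslbb} together with Lemma~\ref{lemma::normequi} produces $(w_h, \hat w_h, \theta_h) \in U_h$ with $\div w_h = p_h$ and $\|w_h, \hat w_h, \theta_h\|_\eps \lesssim \|p_h\|_0$; I would set $s_3 = \nu^{-1}(0, -w_h, -\hat w_h, -\theta_h, 0)$, giving $B(r; s_3) = \nu^{-1}\|p_h\|_0^2 + \nu^{-1}\langle \div\sigma_h; w_h, \hat w_h, \theta_h\rangle_{U_h} - \nu^{-1}c(\omega_h, \theta_h)$ with $\|s_3\|_{S_h} \lesssim \nu^{-1/2}\|p_h\|_0 \lesssim \|r\|_{S_h}$.

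It remains to combine. I would fix $\delta_2, \delta_3 > 0$ small but $\nu$- and $h$-independent and estimate every cross term by Cauchy--Schwarz (using the continuity estimate \eqref{eq:msc-cont-d} and $h\|\div\theta_h\|_0 \lesssim \|w_h, \hat w_h, \theta_h\|_\eps \lesssim \|p_h\|_0$ from Lemma~\ref{lemma::normequi}) followed by Young's inequality, splitting each cross term into a small multiple of a ``good'' term it is paired with and a controllable multiple of $\nu^{-1}\|\sigma_h\|_0^2$ or $\nu h^2\|\div\omega_h\|_0^2$ already captured by $s_1$. Choosing $\delta_2$ first so that the $\tilde s_2$ cross term leaves at least half of the $\sigma$-coercivity intact, and then $\delta_3$ so that the $s_3$ cross terms leave the $\sigma$- and $\div\omega$-coercivities positive, the five target quantities all survive with $\nu$-independent constants. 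With Lemma~\ref{lemma::normequi} reassembling $\nu\|u_h, \hat u_h, \omega_h\|_\eps^2$, this gives $B(r; s) \gtrsim \|r\|_{S_h}^2$, i.e.\ \eqref{eq::infsup_mcs}, while the triangle inequality over the three ingredient norms gives \eqref{eq::infsup_mcs-2}.

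The main obstacle is not the construction itself---the two LBB conditions do the heavy lifting---but the $\nu$-robust bookkeeping: one must weight the three ingredients by the correct powers of $\nu$ so that both $\|s\|_{S_h} \lesssim \|r\|_{S_h}$ and the Young-inequality absorptions hold with constants independent of $\nu$ and $h$. In particular, the stabilization cross term $\nu^{-1}c(\omega_h, \theta_h)$ arising in $s_3$ must be absorbed jointly into the $\nu h^2\|\div\omega_h\|_0^2$ term from $s_1$ and the $\nu^{-1}\|p_h\|_0^2$ term from $s_3$ via $h\|\div\omega_h\|_0 \, \|p_h\|_0 \le \tfrac12\nu h^2\|\div\omega_h\|_0^2 + \tfrac12\nu^{-1}\|p_h\|_0^2$; it is precisely the presence of the consistent form $c(\cdot,\cdot)$ and of the $h^2\|\div\omega_h\|_0^2$ contribution in $\|\cdot\|_\eps$ that make this absorption possible.
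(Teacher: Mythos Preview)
Your proposal is correct and follows essentially the same route as the paper: the same three ingredients (the diagonal test $s^*$, the $\nu$-scaled Lemma~\ref{lemma:lbb_mcs} test, and the $\nu^{-1}$-scaled Lemma~\ref{lem::hdgstokeslbb} test), the same norm equivalence via Lemma~\ref{lemma::normequi}, and the same Young-inequality absorption of the three cross terms. The only cosmetic difference is the parameterization of the linear combination: the paper writes $s=\beta s^* + \tilde s + s^\Delta$ and chooses $\beta$ large, whereas you write $s=s_1+\delta_2\tilde s_2+\delta_3 s_3$ and choose $\delta_2,\delta_3$ small, which are equivalent up to rescaling.
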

\begin{proof}
  We will find the required $s$ as a sum of three terms, each in $S_h$,
  and each depending on the given $r$.  The first term is set using  
  $s^* = (\sigma_h, u_h, \hat{u}_h, \omega_h, -p_h)$, for which we  obviously have
  \begin{subequations}
    \label{eq:infsup_mcs-s*}
    \begin{align}
      \label{eq:infsup_mcs-s*-1}
      B(r, s^*) & = \nu^{-1} \|\sigma_h \|_0^2 + \nu  h^2 \| \div \omega_h\|_0^2,\\
      \label{eq::contone}
      \| s^* \|_{S_h} & \lesssim \| r \|_{S_h}.
    \end{align}
  \end{subequations}
  The second term is
  $\tilde s = (\nu \tau_h, 0, 0, 0, \nu q_h) \in S_h$, where
  $\tau_h \in \Sigma_h$ and $q_h\in Q_h$ are as in
  Lemma~\ref{lemma:lbb_mcs} obtained using the given components
  $u_h, \hat u_h, \omega_h$ of $r$.  The lemma gives some
  $\tilde C > 0$ such that
  \begin{subequations}
    \label{eq:infsup_mcs-stilde}
    \begin{align}
      \label{eq:infsup_mcs-stilde-1}
      B(r; \tilde s)
      &  = \nu^{-1} (\sigma_h,  \nu\tau_h) + \nu\langle \div  \tau_h; \,
        u_h,\hat u_h, \omega_h \rangle_{U_h} - \nu(\div u_h, q_h)
        \nonumber
      \\
      & \gtrsim
        (\sigma_h,  \tau_h) + 
        \nu \Big( \|u_h, \hat{u}_h, \omega_h\|^2_{U_h} + \| \div u_h \|_0^2\Big),
      \\
      \| \tilde s \|_{S_h}^2 
      & = \nu^{-1}( \| \nu \tau_h\|_0^2 + \| \nu q_h\|_0^2)
        \le \tilde C \nu \Big( \|u_h,
      \hat{u}_h, \omega_h\|^2_{U_h} + \| \div u_h \|^2_0\Big).
      \label{eq::conttwo}
    \end{align}
  \end{subequations}
  The third term is $s^\Delta = (0, -\nu^{-1}v_h, -\nu^{-1}\hat v_h,
  -\nu^{-1} \eta_h, 0) \in S_h$ where $(v_h, \hat v_h, \eta_h) \in
  U_h$ is as in Lemma~\ref{lem::hdgstokeslbb} obtained using the given
  component $p_h$ of $r$. The lemma implies that $\div v_h = p_h$ and
  \begin{subequations}
    \label{eq:infsup_mcs-sdelta}
    \begin{align}
      \label{eq:infsup_mcs-sdelta-1}
      B(r; s^\Delta)
      & = \nu^{-1} \| p_h \|_0^2
        - \nu^{-1} \langle \div\sigma_h;\,
        v_h,\hat v_h, \eta_h \rangle_{U_h}
        +
        \nu^{-1} c(\omega_h, \eta_h),
      \\
    \| s^\Delta \|_{S_h}^2
    & = \nu 
       \|\nu^{-1}v_h,  \nu^{-1}\hat v_h, \nu^{-1} \eta_h \|_\eps^2
      \lesssim \nu^{-1}\| p_h\|_0^2.
      \label{eq::contthree}
  \end{align}
\end{subequations}
Note that to obtain the last inequality,
we have also used Lemma~\ref{lemma::normequi}.

Now letting $\beta > 0$, a constant yet to be
chosen, put $s = \beta s^* + \tilde s + s^\Delta$.
Then, combining~\eqref{eq:infsup_mcs-s*-1},
\eqref{eq:infsup_mcs-stilde-1}
and~\eqref{eq:infsup_mcs-sdelta-1}, 
\begin{equation}
  \label{eq:1500}
  \begin{aligned}
    B(r; s)
    & \gtrsim \frac{\beta}{\nu}\|\sigma_h\|^2_0 + \beta \nu  h^2\|\div \omega_h \|^2_0  + \nu \|u_h, \hat{u}_h, \omega_h\|^2_{U_h}\\
    & \quad  + \nu \| \div u_h \|^2_0  + \frac 1 \nu \| p_h  \|_0^2
    - (\rho_1 + \rho_2 + \rho_3),
  \end{aligned}  
\end{equation}
where
$ \rho_1 = (\sigma_h, \tau_h), \rho_2 = - \nu^{-1}\langle
\div\sigma_h;\, v_h,\hat v_h, \eta_h \rangle_{U_h}, \; \rho_3 =
\nu^{-1} c(\omega_h, \eta_h).$ By \eqref{eq::conttwo} and Young's
inequality,
\begin{align*}
  \rho_1
  & \le \frac{\tilde C}{2 \nu} \|\sigma_h\|_0^2 +
  \frac{\nu}{2}  \Big( \|u_h,
  \hat{u}_h, \omega_h\|^2_{U_h} + \| \div u_h \|^2_0\Big).
\end{align*}
To bound $\rho_2$, note that by Lemma~\ref{th:mcscontinuity},
$\rho_2 \lesssim \nu^{-1} \| \sigma_h \|_0 \| v_h,\hat v_h,
\eta_h\|_\eps $, so by~\eqref{eq::contthree}, there is a $C^\Delta>0$
such that
$\rho_2 \le \nu^{-1/2} \| \sigma_h \|_0 \left(\frac 1 2 C^\Delta
  \nu^{-1} \| p_h \|_0^2\right)^{1/2}$. Thus
\begin{align*}
  \rho_2
  & 
    \le \frac{C^\Delta}{2 \nu} \| \sigma_h\|_0^2 + \frac{1}{4\nu} \| p_h\|_0^2.
\end{align*}
To bound $\rho_3$, we recall from Lemma~\ref{lemma::normequi} that
$ h\| \div \eta_h \|_0 \lesssim \| v_h,\hat v_h, \eta_h
\|_{\eps}$. Hence by~\eqref{eq::contthree}, there is a $C'>0$ such
that
$\rho_3 \le (\nu^{1/2}h \| \div \omega_h \|_0) \left( \frac 1 2 C'
  \nu^{-1} \| p_h \|_0^2 \right)^{1/2}$, so
\begin{align*}
  \rho_3 \le \frac{C'\nu}{2} h^2 \| \div \omega_h \|_0^2
  + \frac{1}{4 \nu} \|p_h \|_0^2.
\end{align*}
Using these  estimates for $\rho_i$ in \eqref{eq:1500},
\begin{align*}
    B(r; s)
  &
    \gtrsim \frac{2\beta - (\tilde C + C^\Delta)}{2\nu}
    \|\sigma_h\|^2_0 + \frac{2\beta - C'}{2}\nu  h^2\|\div \omega_h \|^2_0
  \\
  & \quad + 
    \frac{\nu}{2} \|u_h, \hat{u}_h, \omega_h\|^2_{U_h} +
    \frac{\nu}{2} \| \div u_h \|^2_0 + \frac {1}{2\nu} \| p_h  \|_0^2.
\end{align*}
Since $\tilde C, C^\Delta$ and $C'$ are mesh-independent constants,
choosing $\beta > \max(\tilde C + C^\Delta , C')/2$ and recalling the
norm equivalence of Lemma~\ref{lemma::normequi}, we prove
\eqref{eq::infsup_mcs}. Of course, inequality~\eqref{eq::infsup_mcs-2} follows
from~\eqref{eq::contone}, \eqref{eq::conttwo}, and
\eqref{eq::contthree}.
\qqed
\end{proof}

\subsection{Pressure robust error analysis of MCS scheme}

In addition to the spaces $\Ureg$ and $\Qreg$, the {\it a priori}
error analysis will now also use a stress space with improved
regularity,
$ \Sreg := \Sigma^{\operatorname{sym}} \cap H^1(\mesh, \dd)$.  Note
that the integrals in the terms defining
$B(\sigma, u, \hat u, \omega, p; \cdot)$ are well-defined for
$\sigma \in \Sreg$, $(u, u_t, \omega) \in \Ureg$, and $p\in \Qreg$, so
$B(\cdot, \cdot)$ can be extended to such non-discrete arguments.

\begin{lemma}[Consistency of the MCS method]
  \label{lemma::mcs_cosistency}
  Assume that the exact solution $(\sigma, u,p)$ of
  \eqref{eq::mixedstressstokesweak} fulfills the regularity assumption
  $(u, u_t, \omega) \in \Ureg $ and
  $(\sigma, p) \in \Sreg \times \Qreg$, where $\omega = \curl(u)$. Let
  $(\sigma_h, u_h, \hat u_h, \omega_h, p_h) \in S_h$ be the solution of
  \eqref{eq::mcsstokes}
  and let $(\tau_h, v_h, \hat v_h, \eta_h, q_h) \in S_h$ be an
  arbitrary test function. Then 
\begin{align}\label{eq::mcs_cosistency}
  B(\sigma - \sigma_h,  u-u_h, u_t - \hat u_h, \omega - \omega_h, p - p_h; \tau_h, v_h, \hat{v}_h, \eta_h, q_h) = 0.
\end{align}
\end{lemma}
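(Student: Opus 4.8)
The plan is to exploit the bilinearity of $B$ in its first group of arguments and reduce the claim to a Galerkin-orthogonality statement. Writing $a=(\sigma,u,u_t,\omega,p)$ for the exact trial data and $a_h=(\sigma_h,u_h,\hat u_h,\omega_h,p_h)$ for the discrete solution, the left-hand side of~\eqref{eq::mcs_cosistency} equals $B(a;t)-B(a_h;t)$ for the arbitrary test tuple $t=(\tau_h,v_h,\hat v_h,\eta_h,q_h)\in S_h$. The discrete half is immediate: summing the three equations of~\eqref{eq::mcsstokes} tested against $\tau_h$, $(v_h,\hat v_h,\eta_h)$, and $q_h$ respectively, and recalling $(p_h,\div v_h)=(\div v_h,p_h)$, gives $B(a_h;t)=(f,v_h)$. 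It therefore remains only to show that the exact data is consistent with the discrete form, i.e.\ $B(a;t)=(f,v_h)$ for every $t\in S_h$; subtracting then yields~\eqref{eq::mcs_cosistency}.

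To establish $B(a;t)=(f,v_h)$ I would verify that $a$ satisfies each of the three rows of~\eqref{eq::mcsstokes}. The incompressibility row is trivial since $\div u=0$, and the stabilization term vanishes because $\omega=\curl u$ gives $\div\omega=\div\curl u=0$, so $c(\omega,\eta_h)=0$. For the first row I would use the integrated-by-parts form~\eqref{eq::mcsbblf} of the pairing with $(v_h,\hat v_h,\eta_h)$ replaced by $(u,u_t,\omega)$: the facet term $((\tau_h)_{nt},(u-u_t)_t)_{\partial\mesh}$ vanishes since $(u-u_t)_t=(u_n\,n)_t=0$, and the identity $\nabla u=\eps(u)+\kappa(\curl u)$ reduces the volume term to $-(\tau_h,\eps(u))_\mesh$. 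Because $\div u=0$ the matrix $\eps(u)$ is trace-free and equals $\nu^{-1}\sigma$ by~\eqref{eq::mixedstressstokes-a}, so $\langle\div\tau_h;u,u_t,\omega\rangle_{U_h}=-\nu^{-1}(\sigma,\tau_h)$, which cancels the leading $\nu^{-1}(\sigma,\tau_h)$ term and confirms the first row.

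The delicate step, and the one I expect to be the main obstacle, is the momentum row. Starting from the unintegrated pairing~\eqref{eq:div-hybrid-pairing} evaluated at the exact $\sigma$, the term $(\sigma,\kappa(\eta_h))_\mesh$ drops out by symmetry of $\sigma$ (since $\kappa(\eta_h)$ is skew), and the normal-tangential facet term $(\sigma_{nt},(\hat v_h)_t)_{\partial\mesh}$ must be shown to vanish: on interior facets the two elemental contributions cancel because $\sigma_{nt}$ is continuous (as $\sigma\in\Sigma^{\operatorname{sym}}$) while $(\hat v_h)_t$ is single valued and $\sigma_{nt}$ changes sign with the normal orientation; on $\Gamma_D$ the term vanishes because $\hat v_h=0$ there; and on $\Gamma_N$ it vanishes because the tangential part of~\eqref{eq::mixedstressstokes-f} forces $\sigma_{nt}=0$. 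Consequently $\langle\div\sigma;v_h,\hat v_h,\eta_h\rangle_{U_h}$ collapses to $(\div\sigma,v_h)_\mesh-(\sigma_{nn},(v_h)_n)_{\partial\mesh}$, which by \cite[Theorem~3.1]{mcsII} (using the $nt$-continuity of $\sigma$ and the $H(\div)$-conformity of $v_h$) equals the continuous duality pairing $\langle\div\sigma,v_h\rangle_{\div}$. Since $v_h\in V_h\subset H_{0,D}(\div,\Omega)$, the second equation of the continuous weak form~\eqref{eq::mixedstressstokesweak} then gives $-\langle\div\sigma,v_h\rangle_{\div}-(\div v_h,p)=(f,v_h)$, establishing the momentum row and hence $B(a;t)=(f,v_h)$, which completes the argument.
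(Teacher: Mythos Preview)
Your argument is correct, and for the first row and the incompressibility/stabilization row it coincides with the paper's proof almost verbatim. The treatment of the momentum row, however, follows a genuinely different route. The paper starts from the integrated-by-parts identity~\eqref{eq::mcsbblf}, folds in $-(\div v_h,p)$ to form $(\sigma-pI,\nabla v_h)_\mesh$, integrates by parts once more on each element, rewrites the resulting boundary contributions as sums of facet jumps $\jump{\sigma}_{nt}$ and $\jump{(\sigma-pI)}_{nn}$, and then appeals directly to the strong equation $-\div(\sigma-pI)=f$ together with the Neumann condition~\eqref{eq::mixedstressstokes-f}. You instead work from the unintegrated definition~\eqref{eq:div-hybrid-pairing}, eliminate the $\kappa(\eta_h)$ and $\hat v_h$ facet terms separately (symmetry, $nt$-continuity, and the boundary conditions on $\Gamma_D$ and $\Gamma_N$), identify what remains with the abstract duality pairing $\langle\div\sigma,v_h\rangle_{\div}$ via \cite[Theorem~3.1]{mcsII}, and finally invoke the continuous weak formulation~\eqref{eq::mixedstressstokesweak} with the conforming test function $v_h\in V_h\subset H_{0,D}(\div,\Omega)$. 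Your route is arguably cleaner and keeps the argument at the level of the weak formulation rather than the strong PDE, at the cost of depending on an external characterization of the duality pairing; the paper's computation is more self-contained but longer. Both rely on the same regularity ingredient, namely that $\sigma\in\Sreg$ forces $\jump{\sigma}_{nt}=0$ across interior facets.
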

\begin{proof}
  Since $\sigma$ is symmetric we have that $\sigma :
  \kappa(\eta_h)=0$. Next, using the regularity assumptions,
  starting from \eqref{eq::mcsbblf}, we get 
\begin{align*}
  -\langle\div\sigma;\,
  &v_h, \hat{v}_h, \eta_h\rangle_{U_h} - (\div v_h, p_h)
    = (\sigma - pI :\nabla v_h)_\mesh - (\sigma_{nt} ,\cdot (v_h-\hat{v}_h)_t)_{\partial\mesh}\\
  & = -(\div(\sigma- pI),  v_h)_\mesh - (\sigma_{nt},  (v_h-\hat{v}_h)_t)_{\partial\mesh} +((\sigma - pI)n, v_h )_{\partial\mesh}\\
  & = -(\div(\sigma- pI),  v_h)_\mesh
    + (\sigma_{nt},  \hat{v}_h)_{\partial \mesh}
    - ((\sigma - pI)_{nn},  (v_h)_n)_{\partial \mesh}
  \\
  & = -(\div(\sigma- pI),  v_h)_\mesh
    + \sum_{F\in\facets} (\jump{\sigma}_{nt},  \hat{v}_h)_F
    - (\jump{(\sigma - pI)}_{nn},  (v_h)_n)_F
    \\
  & = -(\div(\sigma- pI),  v_h)_\mesh
    + \int_{\Gamma_N}(\sigma - pI)_{nn} (v_h)_n - \sigma_{nt} \hat{v}_h \ds\\
  & = -(\div(\sigma- pI),  v_h)
     = (f, v_h),
\end{align*}
where the boundary integral vanished using
\eqref{eq::mixedstressstokes-f} given on $\Gamma_N$.
Next, since
$\nu^{-1}\sigma=\eps(u)=\nabla u - \kappa(\omega)$ we have
\begin{align*}
  \nu^{-1} (\sigma, \tau_h)
  &+ \langle\div\tau_h; \,u, \hat{u}, \omega \rangle_{U_h} \\
  & =  \nu^{-1} (\sigma, \tau_h) -(\tau_h, \nabla u -\kappa(\omega))_\mesh
    + (\tau_{nt}, (u - u_t)_t)_{\partial \mesh} = 0.    
\end{align*}
The final remaining term in the bilinear form is also zero
since $(\div u, q_h) = 0$ as the exact solution is divergence free.
\qqed
\end{proof}

\begin{theorem}[Error estimate for the MCS method]
  \label{lemma::mcs_apr_error} Assume
  that the exact solution $(\sigma, u,p)$ of
  \eqref{eq::mixedstressstokesweak} fulfills the regularity assumption
  $(u, u_t, \omega) \in \Ureg $ and $(\sigma, p) \in \Sreg \times
  \Qreg$, where $\omega = \curl(u)$. Let $(u_h, \hat u_h, \omega_h)
  \in U_h$ and $(\sigma_h,p_h) \in \Sigma_h \times Q_h$ be the
  solution of \eqref{eq::mcsstokes}. Then we have  the pressure robust  error estimate 
  \begin{align}
    \nu^{-1}\|\sigma - \sigma_h\|_0
    + \|u-u_h, u_t - \hat{u}_h, \omega  -  \omega_h\|_{\eps} 
    \lesssim h \|u\|_{H^2(\mesh)} . \label{eq::mcs_apr_error_pres}
  \end{align}
Furthermore, the pressure error can be bounded by
\begin{align} \label{eq::mcs_apr_error_pres-2}
  \nu^{-1}\|p - p_h\|_0 \lesssim h \big(\|u\|_{H^2(\mesh)} + \nu^{-1}\|p\|_{H^1(\mesh)}\big).
\end{align}
\end{theorem}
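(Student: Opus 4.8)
The plan is to run the standard inf-sup error analysis built on Lemma~\ref{lemma::infsup_mcs}. I would set $r = (\sigma, u, u_t, \omega, p)$ for the exact solution and $r_I = (\IS\sigma, \IV u, \IVh u_t, \IW\omega, \IQ p)$ for its interpolant, both lying in the extended domain on which $B$ is defined (as noted before the theorem). With $r_h$ the discrete solution and $e_h = r_I - r_h \in S_h$, Lemma~\ref{lemma::infsup_mcs} supplies an $s \in S_h$ with $\|e_h\|_{S_h}^2 \lesssim B(e_h; s)$ and $\|s\|_{S_h} \lesssim \|e_h\|_{S_h}$. The consistency identity $B(r - r_h; \cdot) = 0$ of Lemma~\ref{lemma::mcs_cosistency} then turns this into $\|e_h\|_{S_h}^2 \lesssim B(r_I - r; s)$, so everything reduces to estimating the interpolation-error form $B(r_I - r; s)$.

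The essential point, which produces pressure robustness, is that every pressure-coupled contribution to $B(r_I - r; s)$ vanishes. Writing $s = (\tau_s, v_s, \hat v_s, \eta_s, q_s)$, the pressure interpolation error $\IQ p - p$ enters only through $-(\IQ p - p, \div v_s)$, which is zero because $\div v_s \in \div V_h \subseteq Q_h$ while $\IQ p - p \perp Q_h$. The term $-(\div(\IV u - u), q_s)$ vanishes since $\div\IV u = \IQ \div u = 0$ by~\eqref{eq:commut} together with $\div u = 0$; and $c(\IW\omega - \omega, \eta_s) = \nu h^2(\div(\IW\omega - \omega), \div\eta_s)$ vanishes since $\div\IW\omega = \Pi^0\div\omega = 0$ by the Raviart--Thomas commuting property and $\omega = \curl u$. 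Hence $B(r_I - r; s)$ comprises only $\nu^{-1}(\IS\sigma - \sigma, \tau_s)$, the pairing $\langle\div\tau_s; \IV u - u, \IVh u_t - u_t, \IW\omega - \omega\rangle_{U_h}$, and $-\langle\div(\IS\sigma - \sigma); v_s, \hat v_s, \eta_s\rangle_{U_h}$, none of which involves the pressure.

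I would bound these three terms by an extended-continuity argument based on the compact representation~\eqref{eq::mcsbblf}, using two facet orthogonalities to avoid uncontrolled traces: $\Pi^0(\IS\sigma - \sigma)_{nt} = 0$ by the moment condition~\eqref{def::interp_Sigma_vol}, and $(\tau_s)_{nt} \in P^0$ on each facet for $\tau_s \in \Sigma_h$. This reduces the volume factors to $\|\IS\sigma - \sigma\|_0$, $\|\nabla(\IV u - u)\|_\mesh$ and $\|\IW\omega - \omega\|_\mesh$, and the facet factors to $h^{1/2}\|(\IS\sigma-\sigma)_{nt}\|_{\partial \mesh}$, to $h^{-1/2}\|\Pi^0(\cdots)_t\|_{\partial \mesh}$, and to $(I-\Pi^0)$-traces of $v_s$ controlled by $\|\nabla v_s\|_\mesh$ via Korn (Lemma~\ref{lem::korn_hdg}); all are estimated by~\eqref{eq::approx_U} and~\eqref{eq::approx_Sigma}. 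Since $\sigma = \nu\eps(u)$ gives $\|\sigma\|_{H^1(\mesh)} \lesssim \nu\|u\|_{H^2(\mesh)}$, each factor multiplying $\|s\|_{S_h} \lesssim \|e_h\|_{S_h}$ is $\lesssim \nu^{1/2} h\|u\|_{H^2(\mesh)}$, whence $\|e_h\|_{S_h} \lesssim \nu^{1/2} h\|u\|_{H^2(\mesh)}$. A triangle inequality with $\|u - \IV u, u_t - \IVh u_t, \omega - \IW\omega\|_\eps \lesssim h\|u\|_{H^2(\mesh)}$ (from~\eqref{eq::approx_U}, after the routine passage between $\nhe{\cdot}$ and $\|\cdot\|_\eps$) and $\nu^{-1}\|\sigma - \IS\sigma\|_0 \lesssim h\|u\|_{H^2(\mesh)}$ then proves~\eqref{eq::mcs_apr_error_pres}. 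For~\eqref{eq::mcs_apr_error_pres-2}, the discrete pressure error satisfies $\nu^{-1/2}\|\IQ p - p_h\|_0 \le \|e_h\|_{S_h} \lesssim \nu^{1/2} h\|u\|_{H^2(\mesh)}$, and adding the interpolation error $\nu^{-1}\|p - \IQ p\|_0 \lesssim \nu^{-1} h\|p\|_{H^1(\mesh)}$ supplies the extra term.

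I expect the main obstacle to be the extended-continuity step, since Lemma~\ref{th:mcscontinuity} is stated for discrete arguments, whereas here $\IS\sigma - \sigma$ and $\IV u - u$ are non-discrete and their facet traces are not immediately covered. The resolution is precisely the pair of orthogonalities above, which collapse every boundary pairing onto a piece controlled by the trace terms deliberately built into~\eqref{eq::approx_U} and~\eqref{eq::approx_Sigma}; verifying the three pressure cancellations is the conceptual crux that makes the velocity and stress bounds genuinely pressure-robust.
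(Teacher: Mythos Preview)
Your proposal is correct and follows essentially the same approach as the paper: use the inf-sup Lemma~\ref{lemma::infsup_mcs} on $e_h = r_I - r_h$, apply consistency to reduce to $B(r_I - r; s)$, observe that the three pressure-coupled terms vanish exactly as you describe, and bound the remaining $\nu^{-1}(\cdot,\cdot)$ and two $\langle\div\cdot;\cdot\rangle_{U_h}$ terms via the compact form~\eqref{eq::mcsbblf} together with the facet orthogonalities $\Pi^0(\IS\sigma-\sigma)_{nt}=0$ and $(\tau_s)_{nt}\in P^0$, then invoke~\eqref{eq::approx_U}--\eqref{eq::approx_Sigma}. Your derivation of the pressure estimate by reading $\nu^{-1/2}\|\IQ p - p_h\|_0$ directly off the $S_h$-norm bound on $e_h$ is in fact slightly more direct than the paper's, which refers back to the LBB route of Theorem~\ref{thm::hdg_apr_error}.
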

\begin{proof}
  As in the proof of Theorem~\ref{thm::hdg_apr_error},  let 
$    E = (\sigma-\sigma_h, u-u_h, u_t - \hat{u}_h, \omega-\omega_h, p - p_h), $
$    E_h = (\IS\sigma-\sigma_h, \IV u-u_h, \IVh u_t - \hat{u}_h, \IW\omega-\omega_h, \IQ p-p_h),$
  and let the interpolation error be $\mathcal E = E - E_h$. 
  Now, using  Lemma \ref{lemma::infsup_mcs}, choose $s = (\tau_h, v_h, \hat{v}_h, \eta_h, q_h)$ such that
  \begin{align*}
      \|E_h\|_{S_h}
      & \lesssim \frac{B( E_h;s)}{\|s\|_{S_h}}.
  \end{align*}
  By the consistency of the MCS formulation \eqref{eq::mcs_cosistency} we  have
  \begin{align*}
    B(E_h;s) = B(E - \mathcal{E};s) = B(\mathcal{E};s).
  \end{align*}
  Hence, if we prove that
  \begin{align} \label{eq::toprove}
    B(\mathcal{E}; s) \lesssim \nu^{1/2}h \|u\|_{H^2(\mathcal{T})} \|s\|_{S_h},
  \end{align}
  then $\| E_h \|_{S_h} \lesssim \nu^{1/2}h \|u\|_{H^2(\mathcal{T})}$, which is enough to yield
  the stated pressure-independent estimate \eqref{eq::mcs_apr_error_pres}: indeed,
  letting $
  \bar E := (\sigma-\sigma_h, u-u_h, u_t - \hat{u}_h, \omega-\omega_h, 0)$, 
  $\bar E_h := (\IS\sigma-\sigma_h, \IV u-u_h, \IVh u_t - \hat{u}_h, \IW\omega-\omega_h, 0),$ and
   $\bar{\mathcal E} = \bar E - \bar E_h$,  we would then  have
   \begin{align}
     \label{eq:Ebaretc}
\| \bar E \|_{S_h} \le \| \bar{\mathcal E} \|_{S_h} + \| \bar E_h \|_{S_h} \le \| \bar{\mathcal E} \|_{S_h} + \| E_h \|_{S_h} \le \nu^{1/2} h \|u\|_{H^2(\mathcal{T})},
\end{align}
using the interpolation
estimates~\eqref{eq::approx_U}--\eqref{eq::approx_Sigma} to bound
$\| \bar{\mathcal E} \|_{S_h}$. Inequality~\eqref{eq:Ebaretc} obviously  implies~\eqref{eq::mcs_apr_error_pres}. Therefore we focus on
proving~\eqref{eq::toprove} and  proceed to separately
inspect each term forming its left hand side.

  Let
  $\mathcal{E}^j$ with $j \in \{\sigma, u, \hat u, \omega, p\}$ denote the
  corresponding components of the interpolation error.
  Then \eqref{eq::mcsbblf} implies 
  \begin{align*}
    \langle\div\tau_h; \mathcal{E}^u, \mathcal{E}^{\hat{u}}, \mathcal{E}^{\omega}\rangle
    & = (\tau_h, \kappa(\mathcal{E}^{\omega}) - \nabla \mathcal{E}^u)_\T
      + ((\tau_h)_{nt}, (\mathcal{E}^u - \mathcal{E}^{\hat{u}})_t)_{\d\T}.
  \end{align*}
  As $(\tau_h)_{nt}$ is constant on each facet,
  we can insert $\Pi^0$ in the last term, so several applications of the
  Cauchy-Schwarz inequality with  $h^{1/2}$ and $h^{-1/2}$ weights
  for the boundary terms yields
  \begin{align} \nonumber 
    \langle\div\tau_h;\, \mathcal{E}^u, \mathcal{E}^{\hat{u}}, \mathcal{E}^{\omega}\rangle
      & \lesssim \big( \|\tau_h\|_0
        +  h^{1/2}\|(\tau_h)_{nt}\|_{\partial \mathcal{T}}\big)(\|\mathcal{E}^u,\mathcal{E}^{\hat{u}}\|_{\nabla} + \|\kappa(\mathcal{E}^{\omega})\|_0)\\     \label{eq:Euuw}
    & \lesssim  \|\tau_h\|_0 h \|u\|_{H^2(\mathcal{T})},
    \end{align}
  where we used~\eqref{eq:easy-to-prove} again 
  and the
  interpolation estimate \eqref{eq::approx_U} in the last step.

  Next consider the symmetrically opposite term in $B$.
  Since $\nabla v_h\in P^0(\mesh)$ and $\mathcal{E}^{\sigma}$ is orthogonal to 
  facet-wise and element-wise constant functions (see \eqref{def::interp_Sigma_vol}--\eqref{def::interp_Sigma_edge}), we have 
  \begin{align*}
    -\langle\div \mathcal{E}^{\sigma}; v_h, \hat{v}_h, \eta_h\rangle_{U_h} & = (\mathcal{E}^\sigma, \nabla v_h - \kappa(\eta_h))_\T - (\mathcal{E}^\sigma_{nt}, (v_h - \hat{v}_h)_t)_{\partial \T} \\
    &=  -(\mathcal{E}^\sigma, (I-\Pi^0)\kappa(\eta_h))_\T - (\mathcal{E}^\sigma_{nt}, (I-\Pi^0)(v_h - \hat{v}_h)_t)_{\partial \T} \\
    &\lesssim  \|\mathcal{E}^\sigma\|_0 ~\|v_h, \hat{v}_h, \eta_h\|_{\eps} + h^{1/2}\|\mathcal{E}^\sigma_{nt}\|_{\partial \mathcal{T}} \| v_h,\hat v_h \|_\nabla.
  \end{align*}
  where on the right hand side of the last inequality, the first term is  obtained
using~\eqref{eq::normequiB} and Lemma~\ref{lemma::normequi}, while the second term is obtained using~\eqref{eq:1200}.
  Thus, the interpolation estimate \eqref{eq::approx_Sigma}
  and Lemma \ref{lem::korn_hdg} imply
  \begin{align}
    \label{eq:Esigma}
    \langle\div \mathcal{E}^{\sigma}; v_h, \hat{v}_h, \eta_h\rangle_{U_h} \lesssim \nu h \|u\|_{H^2(\mathcal{T})} \|v_h, \hat{v}_h, \eta_h\|_{\eps}.
  \end{align}
The  remaining terms are easy: by  Cauchy-Schwarz inequality, 
  \begin{align}
    \label{eq:Etausimple}
    \nu^{-1}(\mathcal{E}^\sigma, \tau_h) \lesssim  h \|u\|_{H^2(\mathcal{T})} \|\tau_h \|_0,
  \end{align}
 and by the definition of $I_W, I_Q$ and \eqref{eq:commut}, 
 \begin{align}
   \label{eq:EEElast}
    (\div \mathcal{E}^{\omega}, \div \eta_h ) &= 0,\quad 
    (\div \mathcal{E}^u, q_h) = 0, \quad \textrm{and} \quad
    (\mathcal{E}^p, \div v_h) = 0,
  \end{align}
where the last equation is due to $\div v_h\in P^0(\mesh)$.
Summing up~\eqref{eq:Euuw},
\eqref{eq:Esigma},
\eqref{eq:Etausimple}, and 
\eqref{eq:EEElast}, we prove~\eqref{eq::toprove}, and hence~\eqref{eq::mcs_apr_error_pres}.

The pressure error estimate \eqref{eq::mcs_apr_error_pres-2} follows
along the same lines as in the proof of
Theorem~\ref{thm::hdg_apr_error}.
\qqed
\end{proof}

\section{Numerical examples}\label{sec::numex}

In this last section we present a simple numerical example to provide
a practical illustration of the theoretical asymptotic convergence
rates as well as to compare the two new methods we presented.  Both
methods were implemented within the finite element library
NGSolve/Netgen (see \cite{netgen,schoeberl2014cpp11} and
\url{www.ngsolve.org}). Testfiles and our computational results
are available at \cite{philip_lederer_2023_7767775}.

The computational domain is given by $\Omega = (0,1)^3$ and the
velocity field is driven by the volume force determined by $f =
-\div \sigma  + \nabla p$ with the exact solution given by
\begin{align*}
  \sigma &= \nu\eps(\curl(\psi,\psi,\psi)), \quad \textrm{and} \quad  p := x^5 + y^5 +z^5 - \frac{1}{2}.                              
\end{align*}
Here $\psi :=x^2(x-1)^2y^2(y-1)^2z^2(z-1)^2$ defines a given potential
and we choose the viscosity $\nu = 10^{-4}$.
While this would lend itself to homogenous Dirichlet conditions being prescribed on the
whole boundary, as we assume $|\Gamma_N|>0$ throughout the paper, we instead opt
to impose non-homogenous Neumann conditions on $\Gamma_N:=\{0\}\times(0,1)\times(0,1)$
and homogenous Dirichlet conditions only on $\Gamma_D:=\partial\Omega\setminus\Gamma_N$.
Note that this requires the additional source terms $\int_{\Gamma_N}(\sigma_{nn} - p) (v_h)_n\ds$
and $\int_{\Gamma_N}\sigma_{nt}\hat{v}_h\ds$ to be provided as data for the methods.

\bigskip\paragraph{\textbf{Convergence}} An initial, relatively coarse
mesh was generated and then refined multiple times.
With the larger problem size on finer meshes in mind, we used a GMRes Krylov space solver 
preconditioned by an auxiliary space method using a lowest order conforming $H^1$ space
(see e.g., \cite{Fu2021}, and for details specific to the MCS case, see~\cite{stokesAux})
with relative tolerance of $10^{-14}$.
Errors measured in different norms and their estimated
order of convergence (eoc) are listed in Table~\ref{tab::conv_hdg}
for the HDG method and Table~\ref{tab::conv_mcs} for the MCS method. For the
HDG method we chose the stabilization parameter $\alpha=6$. As
predicted by the analysis from Theorem~\ref{thm::hdg_apr_error} and
Lemma~\ref{lemma::mcs_apr_error}, the velocity error measured in the
seminorm $\| \varepsilon(u - u_h)\|_0$,  the $L^2$-norm of the
vorticity, and the pressure errors converge at optimal linear order.
Furthermore, for the MCS method, we also observe optimal convergence
for  the stress error. In addition,  we also plotted the $L^2$-norm error
of the velocity. From an Aubin-Nitsche argument one may 
expect a higher order of convergence whenever the dual problem shows
enough regularity~\cite{brezzi2012mixed, mcsI}. Not surprisingly therefore,
 we observe
 quadratic convergence for the $L^2$-norm of the velocity error  for
both methods.

\bigskip\paragraph{\textbf{Condition numbers}} For both HDG and MCS
method, after static condensation within the $(u_h,\hat{u}_h,
\omega_h)$- or $(\sigma_h, u_h, \hat{u}_h, \omega_h)$-block of the
finite element matrix respectively, we obtain a symmetric and positive
definite diagonal block, which we  simply refer to here
as the ``$A$''-blocks of the respective methods.
(Of course, due to the incompressibility constraint, the entire system is
still of saddle point structure.) Both the $A$ blocks have the same
non-zero structure and are expected to have condition number
$\mathcal{O}(h^{-2}),$ but they discretize slightly different operators, namely $\eps$
for the HDG method, and $\dev(\eps)$ for the MCS method. As $\eps(u) =
\dev(\eps(u)) + \frac{1}{3}\div (u) \id$ and the true solution is
divergence-free, adding the (consistent) term
$\frac{1}{3}\div u_h \div v_h $ to the MCS bilinear form yields an $A$
block that is directly comparable to the one of the HDG method. In
Figure \ref{fig::kappas} we show approximate condition numbers
($\operatorname{cond}$) of said $A$ blocks for some of the meshes used
in the previous computations and different values of $\alpha$ in
$a^{\hdg}$. We see that in addition to the MCS method not being
dependent on any stabilization parameter in the first place, there
appears to be no possible choice of $\alpha$ that would make the HDG method's $A$ block  better conditioned than that of the MCS method.

\begin{table}
\begin{tabular}{
  r|
  c !{\!(\!\!\!}c!{\!\!\!)}
  c !{\!(\!\!\!}c!{\!\!\!)}
  c !{\!(\!\!\!}c!{\!\!\!)}
  c !{\!(\!\!\!}c!{\!\!\!)}
  }
  $|\mathcal{T}|$
  & $\| \eps( u- u_h)\|_0$ & \footnotesize eoc
  &$\| u - u_h\|_0$ & \footnotesize eoc
  &$\| \omega - \omega_h\|_0$ & \footnotesize eoc
  &$\| p - p_h\|_0$ & \footnotesize eoc \\
\midrule
  63 & \num{0.0022173599660678116}&--&\num{0.00019090886564967198}&--& \num{0.003248492075177081}&--& \num{0.21231118857781317}&--\\
  504 & \num{0.0017074225194699984}&\numeoc{0.3770228837872961}& \num{8.376278540597054e-05}&\numeoc{1.1885027790190954}& \num{0.002256073294951415}&\numeoc{0.5259562471321205}& \num{0.11648126812663936}&\numeoc{0.86608243447335}\\
  4032 & \num{0.0009345156759792808}&\numeoc{0.8695293427557381}& \num{2.380571599859018e-05}&\numeoc{1.8150013966029703}& \num{0.0011955926546911224}&\numeoc{0.9160879998106013}& \num{0.06052251541685397}&\numeoc{0.9445541125651082}\\
  32256 & \num{0.0005289036827525488}&\numeoc{0.8212138436886784}& \num{7.993236356447451e-06}&\numeoc{1.5744563662963298}& \num{0.000662772094794181}&\numeoc{0.8511411726889326}& \num{0.031013284227886167}&\numeoc{0.9645856378425158}\\
  258048 & \num{0.0002778705621991993}&\numeoc{0.92859201837252}& \num{2.3188796658537836e-06}&\numeoc{1.7853517938904278}& \num{0.00034623221227850134}&\numeoc{0.9367729064989447}& \num{0.015615445640843056}&\numeoc{0.989912569526987}\\
  2064384 & \num{0.0001419097998698605}&\numeoc{0.9694387814379389}& \num{6.307983284046898e-07}&\numeoc{1.8781772136761787}& \num{0.00017671377729121411}&\numeoc{0.9703254320614225}& \num{0.007821817698817814}&\numeoc{0.9973979252107024}
\end{tabular}
\caption{Errors and estimated order of convergence (eoc) for the HDG method.} \label{tab::conv_hdg}
\end{table}

\begin{table}
\begin{adjustbox}{center}
%
\begin{tabular}{
  r|
  c !{\!(\!\!\!}c!{\!\!\!)}
  c !{\!(\!\!\!}c!{\!\!\!)}
  c !{\!(\!\!\!}c!{\!\!\!)}
  c !{\!(\!\!\!}c!{\!\!\!)}
  c !{\!(\!\!\!}c!{\!\!\!)}
  }
  $|\mathcal{T}|$
  & $\| \eps(u - u_h)\|_0$ & \footnotesize eoc
  &$\| u - u_h\|_0$ & \footnotesize eoc
  &$\| \sigma - \sigma_h\|_0$ & \footnotesize eoc
  &$\| \omega - \omega_h\|_0$ & \footnotesize eoc
  &$\| p - p_h\|_0$ & \footnotesize eoc \\
\midrule
  63 &      \num{0.002611775142138412}&--&\num{0.00020733664426244024}&--& \num{3.9747121660207704e-07}&--& \num{0.0031826048425160694}&--& \num{0.21231011812347372}&--\\
  504 &     \num{0.0019139100723314741}&\numeoc{0.4485076505865729}& \num{0.00010109800356526557}&\numeoc{1.036220610134003}& \num{2.857739643935184e-07}&\numeoc{0.47597590469810797}& \num{0.0021792691255353115}&\numeoc{0.5463636697979489}& \num{0.11647847239863739}&\numeoc{0.8661097878093227}\\
  4032 &    \num{0.0009983902759762942}&\numeoc{0.93884725646567}& \num{2.5330657117148225e-05}&\numeoc{1.9967980994441659}& \num{1.4611912343308901e-07}&\numeoc{0.9677294807056731}& \num{0.0011187547199830708}&\numeoc{0.961950601827103}& \num{0.06052098143722692}&\numeoc{0.9445560517079415}\\
  32256 &   \num{0.0006047847478495912}&\numeoc{0.7231821263574093}& \num{7.827067631521843e-06}&\numeoc{1.6943406873468894}& \num{7.889606506302894e-08}&\numeoc{0.889119751319206}& \num{0.000606159659253402}&\numeoc{0.8841240219445474}& \num{0.031012329390826925}&\numeoc{0.9645934897855617}\\
  258048 &  \num{0.0003057836087706834}&\numeoc{0.9839106819675295}& \num{2.0494118803867253e-06}&\numeoc{1.9332619518503469}& \num{4.010054944149707e-08}&\numeoc{0.9763313436569098}& \num{0.0003067428221888684}&\numeoc{0.9826682569249834}& \num{0.015614912413415945}&\numeoc{0.9899174163226863}\\
  2064384 & \num{0.00015037879556418213}&\numeoc{1.0239099230077853}& \num{5.155134077348788e-07}&\numeoc{1.991128103467681}& \num{2.0076185794699295e-08}&\numeoc{0.9981368013720411}& \num{0.0001528979283428527}&\numeoc{1.004460725733168}& \num{0.00782153597297643}&\numeoc{0.9974006238874558}
\end{tabular}

\end{adjustbox}
\caption{Errors and estimated order of convergence (eoc) for the MCS
method.} \label{tab::conv_mcs}
\end{table}

\begin{figure}
  \resizebox{.8\textwidth}{!} {
    \input{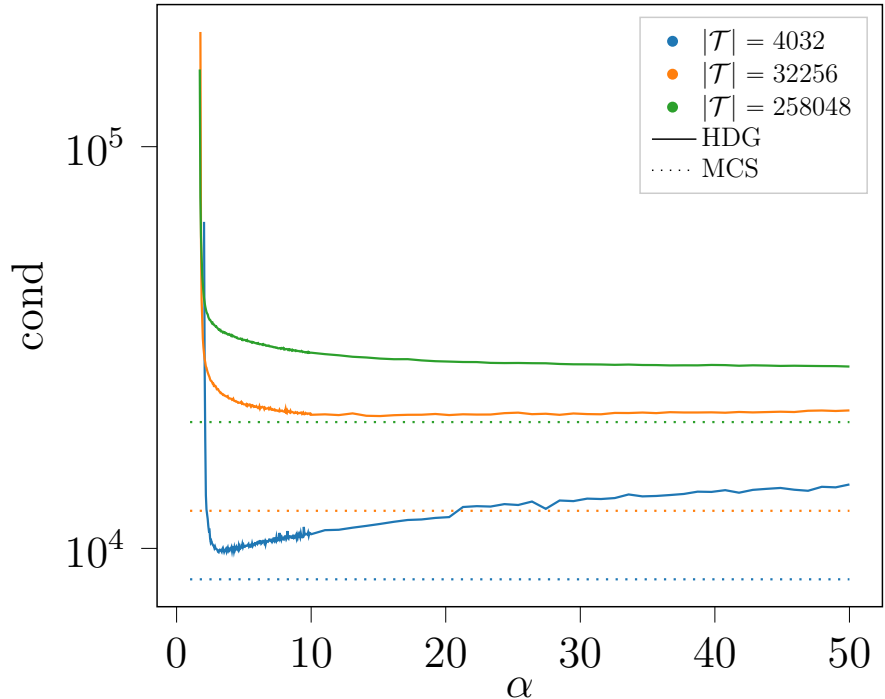}
    }
  \caption{Approximate condition numbers of the corresponding $A$
    blocks of the HDG (solid lines) and the MCS (dotted lines in the
    same color) method on different meshes. Different values of
    $\alpha$ on the x axis and approximate condition number ($\operatorname{cond}$) on the y axis. }
  \label{fig::kappas}
\end{figure}


\section*{Acknowledgements}

\begin{footnotesize}
The authors acknowledge support from the Austrian Science Fund (FWF)
through the research program ``Taming complexity in partial
differential systems'' (F65) - project ``Automated discretization in
multiphysics'' (P10), the research program W1245, and the 
NSF grants 1912779, 2136228.  
\end{footnotesize}

\bibliographystyle{siam}
\bibliography{literature}

\end{document}